\documentclass[11pt,a4paper]{article}
\usepackage{lmodern}
\usepackage[utf8]{inputenc}
\usepackage[french,english]{babel}
\usepackage[T1]{fontenc}
\usepackage{amsmath, amsfonts, amssymb, amsthm}
\usepackage{fullpage}
\usepackage{microtype}
\usepackage{enumitem}
\setenumerate{label=(\alph*)}
\usepackage{mathtools} 
\usepackage{stmaryrd}
\usepackage{tikz}
\usepackage[smalltableaux=true]{ytableau}
\usepackage[colorlinks=true, linkcolor=blue]{hyperref}

\numberwithin{equation}{section}

\newtheorem{theorem}[equation]{Theorem}
\newtheorem{theoremet}{Theorem}

\newtheorem{proposition}[equation]{Proposition}

\newtheorem{corollary}[equation]{Corollary}
\newtheorem{lemma}[equation]{Lemma}

\theoremstyle{definition}
\newtheorem{definition}[equation]{Definition}

\theoremstyle{remark}
\newtheorem{remark}[equation]{Remark}
\newtheorem{example}[equation]{Example}

\allowdisplaybreaks[3]

\DeclareMathOperator\cosec{\mathrm{cosec}}
\newcommand\Z{\mathbb{Z}}
\newcommand\N{\mathbb{Z}_{\geq 0}}
\newcommand\Ze[1][e]{\Z/#1\Z}
\newcommand\R{\mathbb{R}}

\newcommand\E{\mathbb{E}}
\newcommand\Var{\mathrm{Var}}
\newcommand\kernelJ{\mathsf{J}^t}
\newcommand\kJ{\mathcal{J}}
\newcommand\besselJ[1][J]{{#1}^t}

\newcommand\besselsquare[1]{J_{#1}^{t,2}}

\newcommand\vare{\mathsf{e}}

\newcommand\ind{\mathbf{1}}
\newcommand\xrestr{\hat x}

\newcommand\ic{\mathbf{i}} 
\newcommand\iet{i^*}
\newcommand\half[1][1]{{\textstyle\frac{#1}{2}}}
\newcommand\parts{\mathcal{P}}
\newcommand\partsn[1][n]{\parts_{#1}}
\newcommand\tomega{\widetilde\omega}

\newcommand\valsumexp[1][\ell]{\mathfrak{e}_{#1}}

\newcommand\hdek[1][\ell]{\homega_{#1}}
\newcommand\ep{e'}

\newcommand\besselexp{\mathsf{F}}
\newcommand\sgn{\mathrm{sgn}}
\newcommand\sgnx[2]{\epsilon_{#1,#2}}
\newcommand\sumexp[1][j-i]{E^{[e]}_{#1}}
\newcommand\sumpartexp{\mathcal{E}}
\newcommand\sumexpzero{E^{[e]}}
\newcommand\homega{\hat\omega}
\newcommand\limd[1][k]{d_{e,#1}}

\newcommand\pl[1][n]{\mathrm{Pl}_{#1}}
\newcommand\plt[1][t]{\mathrm{pl}_{#1}}
\newcommand\rhot[1][t]{\rho^{#1}}
\newcommand\Et[1][t]{\E_{#1}}
\newcommand\Vart[1][t]{\Var_{#1}}
\newcommand\Covt[1][t]{\mathrm{Cov}_t}

\newcommand\D{\mathcal{D}}

\newcommand\rhod{\rho}



\newcommand\ci[1][i]{c_{#1}}
\newcommand\myxi[1][i]{x_{#1}}

\newcommand\cm\omega 

\newcommand\myvart{v_t}

\newcommand\permat{P}

\newcommand\myepsilon\delta

\newcommand\Y{\mathcal{Y}}

\author{Salim \textsc{Rostam}\thanks{Univ Rennes, CNRS, IRMAR - UMR 6625, F-35000 Rennes, France}}
\title{Core size of a random partition for the Plancherel measure}
\date{}

\interfootnotelinepenalty=10000 

\begin{document}
\maketitle

\abstract{We prove that the size of the $e$-core of a partition chosen under the Poissonised Plancherel measure converges in distribution  to, as the Poisson parameter goes to $+\infty$ and after a suitable renormalisation, a sum of $e-1$ mutually independent Gamma distributions with explicit parameters. Such a result already exists for the uniform measure on the set of partitions of $n$ as $n$ goes to $+\infty$,  the parameters of the Gamma distributions being all equal. We rely on the fact that the descent set of a partition is a determinantal point process under the Poissonised Plancherel measure and on a central limit theorem for such processes.}

\section{Introduction}

The conjugacy classes of the symmetric group $\mathfrak{S}_n$ on $n$ letters are parametrised by the set $\partsn$ of \emph{partitions} of the integer $n$. These are the sequences $\lambda = (\lambda_1 \geq \lambda_2 \geq \dots)$ of positive integers with sum $|\lambda| \coloneqq n$.
 In particular, the irreducible complex representations $S^\lambda$ of $\mathfrak{S}_n$ can be parametrised by $\lambda \in \partsn$. It turns out that one can do this parametrisation such that the dimension of $S^\lambda$ as a $\mathbb{C}$-vector space is the number $\#\mathrm{Std}(\lambda)$ of \emph{standard Young tableaux} of shape $\lambda$, so that one has the formula
\begin{equation}
\label{equation:intro_n!}
n! = \sum_{\lambda \in \partsn} \#\mathrm{Std}(\lambda)^2
\end{equation}
(see, for instance, \cite{sagan}).
Quotienting by $n!$, the above equality exhibits a particular probability measure $\pl$ on $\partsn$, known as the \emph{Plancherel measure}. The equality~\eqref{equation:intro_n!} can also be seen as a consequence of the Robinson--Schensted correspondence between permutations in $\mathfrak{S}_n$ and pairs of standard Young tableaux of the same shape with $n$ boxes, for which the Plancherel measure $\pl$ is then the image of the uniform measure of $\mathfrak{S}_n$. This correspondence has many other additional properties, namely: if $\lambda = (\lambda_1 \geq \dots ) \in \partsn$ corresponds to $\sigma \in \mathfrak{S}_n$, then the first part $\lambda_1$  is exactly the length of a longest increasing subsequence in $\sigma$ (see, for instance, \cite{romik,sagan} for more details).

A natural question is the following: what does the probability measure $\pl$ on $\partsn$ look like? The question for an arbitrary $n$ is quite complicated; however there is a very nice description as $n \to +\infty$. Namely, Logan--Shepp~\cite{logan-shepp:variational} and independently Kerov--Vershik~\cite{kerov-vershik:asymptotics} proved that there exists a universal limit shape $\Omega$ for the partitions chosen under  $\pl$. More precisely, the rim of a partition $\lambda \in \partsn$ converges uniformly in probability under $\pl$ to the limit shape $\Omega$ as $n \to +\infty$. We represent in Figure~\ref{figure:lgn} such a large partition, drawn with the Russian convention for the Young diagram, together with the limit shape\footnote{Figures~\ref{figure:lgn}, \ref{figure:e4} and~\ref{figure:e7} were obtained using \textsc{SageMath}~\cite{sage}.}. Note that there exist limit shapes for other probability measures on $\partsn$, namely, the uniform measure (\cite{vershik}, with many other examples inside), the Gelfand measure \cite{logan-shepp:variational,meliot:gelfand,meliot:kerov} and the family of Schur--Weyl measures \cite{biane,feray-meliot}, the latter being a generalisation of the Plancherel measure. These limit shape results, which are analogues of the law of large numbers, have analogues of the central limit theorem. The first result of this kind was given by Kerov~\cite{kerov:gaussian} for the Plancherel measure: the difference between the partition and the limit shape converges in distribution  to a generalised Gaussian process (see, for instance, \cite{ivanov-olshanski,feray-meliot,meliot:kerov,bogachev-su:clt} for further results on this direction).

\begin{figure}
\begin{center}
\includegraphics[scale=.8]{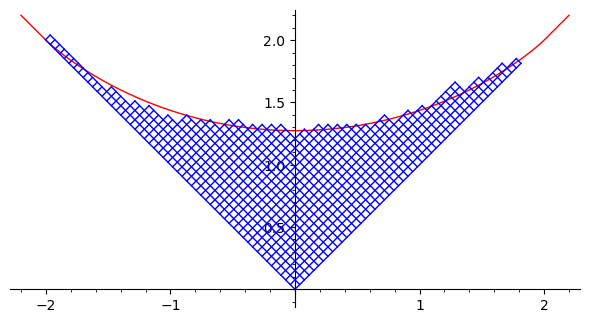}
\end{center}
\caption{Universal limit shape for the partitions under the Plancherel measure (here with $n = 700$).}
\label{figure:lgn}
\end{figure}

We now focus on the Plancherel measure $\partsn$. The derivative of the limit shape $\Omega$ represents the proportion of ``up'' and ``down'' steps made by the Young diagram at a given position. The ``down'' steps correspond to the elements of the \emph{descent set}
\[
\D(\lambda) \coloneqq \{\lambda_a - a : a \geq 1\} \subseteq \Z.
\]
Now if we look at several positions at the same time, for a given (finite) set $X \subseteq \Z$ we are interested in $\pl\bigl(X \subseteq \D(\lambda)\bigr)$. These quantities as functions of $X$ are known as \emph{correlation functions}. It turns out that these correlation functions are well-understood for the \emph{Poissonised} Plancherel measures on $\parts = \amalg_{n \geq 0} \partsn$. Given $t > 0$, the Poissonised Plancherel measure $\plt$ is the Plancherel measure $\pl$ where $n$ is a Poisson random variable of parameter $t$. In other words, setting $\pl(\lambda) = 0$ if $|\lambda| \neq n$, for any $\lambda \in \parts$ we have
\[
\plt(\lambda) = \exp(-t)\sum_{n = 0}^{+\infty}\frac{t^n}{n!}\pl(\lambda).
\]
(We denote by $\exp$ the exponential function, to avoid  any ambiguity with an integer $e \geq 2$ that we will intensively use in the sequel.) Borodin, Okounkov and Olshanski~\cite{boo} proved that for any fixed $s \geq 1$ and $x_1, \dots, x_s \in \Z$ pairwise distinct, we have
\begin{equation}
\label{equation:intro_BOO}
\plt\bigl(x_1,\dots,x_s \in \D(\lambda)\bigr) = \det\bigl[ \kernelJ(x_a,x_b)\bigr]_{1 \leq a,b \leq s},
\end{equation}
where $\kernelJ$ is the \emph{discrete Bessel kernel}, built with Bessel functions of the first kind. This means that the discrete point process $\D(\lambda)$ is \emph{determinantal}, with kernel $\kernelJ$. Under some conditions on $x_1,\dots,x_s$, they were able to \emph{de-Poissonise} the result, proving that the limit of $\pl\bigl(x_1,\dots,x_s \in \D(\lambda)\bigr)$ as $n \to +\infty$ satisfies an equality like~\eqref{equation:intro_BOO}, where the discrete Bessel kernel  $\kernelJ$ is replaced by the \emph{discrete sine kernel}.

Determinantal point processes are well-studied and appear for instance in the theory of random matrices and in mathematical physics (see, for instance,~\cite[\textsection 2]{soshnikov:survey}). Given a determinantal point process,  for each $t > 0$ let $I_t \subseteq \R$ be an interval and $\#_t$ the number of points of the determinantal point process inside $I_t$. Then under some conditions we have the following central limit theorem:
\[
\frac{\#_t - \E \#_t}{\sqrt{\Var \#_t}} \xrightarrow[t \to +\infty]{d} \mathcal{N}(0,1),
\]
where $\mathcal{N}(0,1)$ is the standard Gaussian distribution. This convergence in distribution was first established by Costin--Lebowitz~\cite{costin-lebowitz} for the sine kernel, and was then generalised for more general kernels by Widom and Soshnikov~\cite{soshnikov:gaussian, soshnikov:survey}. In the context of the determinantal point process $\D(\lambda)$, this central limit theorem was namely used by Bogachev and Su~\cite{bogachev-su:clt, bogachev-su} to prove a pointwise analogue of Kerov's central limit theorem~\cite{kerov:gaussian}.

\bigskip

\ytableausetup{nosmalltableaux}

\begin{figure}
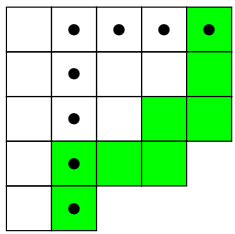

\begin{center}
\begin{ytableau}
{}  &\bullet &\bullet &\bullet &*(green)\bullet
\\
{} &\bullet&{} &{} &*(green)
\\
{} &\bullet &{} &*(green)&*(green)
\\
{} &*(green)\bullet &*(green)&*(green)
\\
{}&*(green)\bullet
\end{ytableau}
\end{center}
\caption{The hook $h_{(a,b)}$ and the rim hook $r_{(a,b)}$ for $(a,b) = (1,2)$ and 
$\lambda = (5,5,5,4,2)$.}
\label{figure:hook_rim}
\end{figure}

\ytableausetup{smalltableaux}

 Let~$\lambda = (\lambda_1 \geq \dots \geq \lambda_N > 0)$ be a partition. 
We will now recall a particularly nice link between the descent set $\D(\lambda)$ and the ``(rim) hook removal operation'' in the Young diagram of~$\lambda$ (we refer, for instance, to~\cite[\textsection 2.7]{james-kerber:representation} for more details).
 We identify the Young diagram $\Y(\lambda)$ with the corresponding part of $\Z_{\geq 1}^2$, more precisely:
\[
\Y(\lambda) \coloneqq \bigl\{(a,b) \in \Z_{\geq 1}^2 : 1 \leq a \leq N \text{ and } 1 \leq b \leq \lambda_a\bigr\}.
\]
 Given a node $(a,b) \in \Y(\lambda)$ of the Young diagram, let us define two subsets of $\Y(\lambda)$:
 \begin{itemize}
 \item the \emph{hook} $h_{(a,b)}$ is  given by:
\[
h_{(a,b)} \coloneqq \bigl\{(a',b') \in \Y(\lambda) : (a' = a \text{ and } b' \geq b) \text{ or } (a' \geq a \text{ and } b' = b)\bigr\},
\]
\item the \emph{rim hook} $r_{(a,b)}$ is the subset of the rim of $\Y(\lambda)$ given by  the nodes that are between the southmost and eastmost nodes of $h_{(a,b)}$, that is:
\[
r_{(a,b)} \coloneqq \bigl\{(a',b') \in \Y(\lambda) : a' \geq a,\, b' \geq b \text{ and } (a'+1,b'+1) \notin \Y(\lambda)\bigr\}.
\]
\end{itemize} 
An example of a hook and its associated rim hook is given in Figure~\ref{figure:hook_rim}. Note that we always have $\#h_{(a,b)} = \#r_{(a,b)}$.
Removing the hook $h_{(a,b)}$ from $\Y(\lambda)$ and then patching the two (possibly one) disconnect parts together gives another Young diagram, of a partition $\mu$. Note that $\Y(\mu)$ is also obtained from $\Y(\lambda)$ by removing the rim hook $r_{(a,b)}$. By definition of the hook removal operation, the following property holds:
\begin{center}
if $h \coloneqq \#r_{(a,b)}$ then there exists $\beta \in \D(\lambda)$ with $\beta-h \notin \D(\lambda)$
\\
such that $\D(\mu) = \bigl(\D(\lambda) \setminus\{\beta\}\bigr) \sqcup \{\beta-h\}$.
\end{center}
A rim hook with $\#r_{(a,b)} = h$ is called a \emph{$h$-rim hook}.

\medskip
Now fix $e \in \Z_{\geq 2}$. We are ready to define the notion of \emph{$e$-core} of a partition (again, we refer to~\cite[\textsection 2.7]{james-kerber:representation} for more details). Starting from (the Young diagram of) $\lambda$ we can repeatedly remove $e$-rim hooks until there are no more $e$-rim hooks. By the above property, the following two quantities are uniquely determined:
\begin{itemize}
\item the number $\mathrm{w}_e(\lambda)$ of $e$-rim hooks that we have removed,
\item the final partition $\overline{\lambda}$.
\end{itemize}
The integer $\mathrm{w}_e(\lambda)$ is the \emph{$e$-weight} of $\lambda$ and $\overline{\lambda}$ is the \emph{$e$-core} of $\lambda$. We show in Figure~\ref{figure:core} an example of an $8$-core calculation. Saying that two partitions $\lambda,\mu \in \partsn$ have the same $e$-core is equivalent to saying that the Young diagrams of $\lambda$ and $\mu$ have the same number $\ci(\lambda)$ of \emph{$i$-nodes} for all $i \in \Ze$.  It is also equivalent to~$\lambda$ and~$\mu$ belonging to the same block of a certain Iwahori--Hecke algebra (see, for instance, \cite[Chapter 5]{mathas:iwahori-hecke}).

\ytableausetup{nosmalltableaux}

\begin{figure}
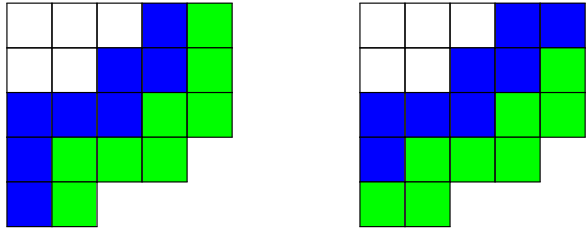

\begin{center}
\begin{ytableau}
{}  & & & *(blue)&*(green)
\\
{} & &*(blue) &*(blue) &*(green)
\\
*(blue) &*(blue) & *(blue)&*(green)&*(green)
\\
*(blue) &*(green) &*(green)&*(green)
\\
*(blue)&*(green)
\end{ytableau}
\qquad\qquad
\begin{ytableau}
{}  & & & *(blue)&*(blue)
\\
{} & &*(blue) &*(blue) &*(green)
\\
*(blue) &*(blue) & *(blue)&*(green)&*(green)
\\
*(blue) &*(green) &*(green)&*(green)
\\
*(green)&*(green)
\end{ytableau}
\end{center}
\caption{The $8$-core of
$\lambda = (5,5,5,4,2)$ is $\overline{\lambda} = (3,2).$}
\label{figure:core}
\end{figure}

\ytableausetup{smalltableaux}


\bigskip

If a limit shape exists for a given probability measure on $\partsn$ for $n \to +\infty$, it is thus natural to ask what is the behaviour of $\overline{\lambda}$ as $n \to +\infty$. When $\partsn$ is endowed with the uniform measure, Lulov--Pittel~\cite{lulov-pittel} and  later Ayyer--Sinha~\cite{ayyer-sinha} proved that $\frac{\pi}{\sqrt n}|\overline{\lambda}|$ converges in distribution to the Gamma distribution $\Gamma\bigl(\frac{e-1}{2},\sqrt 6\bigr)$ with shape $\frac{e-1}{2}$ and scale $\sqrt 6$.
In this paper, we study the behaviour of $|\overline\lambda|$ as $t \to +\infty$ when $\lambda$ is chosen under  the Poissonised Plancherel measure $\plt$. Note that the study of $|\overline{\lambda}|$ is a first step toward a better understanding of $\overline{\lambda}$, namely, the behaviour of its parts (see Remark~\ref{remark:length_first_row} for the size of the first part) or the behaviour of its rim (as in~\cite{lulov-pittel}).  As in~\cite{lulov-pittel,ayyer-sinha} we rely on the following identity:
\[
|\overline{\lambda}| = \frac{e}{2}\sum_{i \in \Ze} \myxi(\lambda)^2 + \sum_{i = 0}^{e-1} i \myxi(\lambda),
\]
where $\myxi(\lambda) \coloneqq \ci(\lambda) - \ci[i+1](\lambda)$.
Now the works of~\cite{lulov-pittel,ayyer-sinha} both rely on the asymptotics of Hardy--Ramanujan for the partition function $\#\partsn$. For our means, as in~\cite{bogachev-su:clt,bogachev-su} we heavily rely on the fact~\eqref{equation:intro_BOO} that $\D(\lambda)$ is determinantal~\cite{boo}  and we then exploit the first order asymptotics of Bessel functions. Our first result is the following (see Proposition~\ref{proposition:Et_bounded} and Theorem~\ref{theorem:limit_covt}), where $\Et$, $\Vart$ and $\Covt$ respectively denote the expectation,  variance and covariance with respect to $\plt$.

\begin{theoremet}
\label{theoremet:et_covt}
Under the Poissonised Plancherel measure $\plt$, as $t \to +\infty$ we have:
\begin{align*}
\Et \myxi(\lambda) &= O(1),
\\
\Covt\bigl(\myxi(\lambda),\myxi[j](\lambda)\bigr) &\sim \frac{2\sqrt t}{\pi e^2}\left[\cot(j-i+\half)\tfrac{\pi}{e} - \cot(j-i-\half)\tfrac{\pi}{e}\right],
\end{align*}
for all $i,j \in \Ze$. In particular, we have $\Vart \myxi(\lambda) \sim \frac{4\sqrt t}{\pi e^2}\cot\frac{\pi}{2e}$.
\end{theoremet}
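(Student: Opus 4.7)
The strategy is to express $\myxi(\lambda)$ as a linear statistic of the determinantal point process $\D(\lambda)$ and then to compute directly with the Bessel kernel $\kernelJ$. The first step is the combinatorial identity
\[
\myxi(\lambda) = \sum_{\alpha \equiv i \pmod{e}} \bigl(\ind[\alpha \in \D(\lambda)] - \ind[\alpha < 0]\bigr),
\]
which identifies $\myxi(\lambda) = \ci(\lambda) - \ci[i+1](\lambda)$ with the ``charge'' on the $i$-th runner of the $e$-abacus. This holds because adding a node of residue $i$ to the Young diagram corresponds, via the descent set, to sliding a single bead from a position $\beta \equiv i-1$ to the adjacent position $\beta+1 \equiv i \pmod e$, which changes precisely the charges on runners $i-1$ and $i$. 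The sum on the right is finite for each $\lambda$ since $\D(\lambda) \triangle \Z_{<0}$ is.

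Once $\myxi$ is a linear statistic, the determinantal structure together with the fact that $\kernelJ$ is an orthogonal projection (so $\sum_{\gamma} \kernelJ(\alpha,\gamma)^2 = \kernelJ(\alpha,\alpha)$) yields the standard formulas
\[
\Et \myxi(\lambda) = \sum_{\alpha \equiv i \pmod{e}} \bigl(\kernelJ(\alpha,\alpha) - \ind[\alpha<0]\bigr),
\]
\[
\Covt\bigl(\myxi(\lambda), \myxi[j](\lambda)\bigr) = \delta_{ij}\sum_{\alpha \equiv i} \kernelJ(\alpha,\alpha) - \sum_{\alpha \equiv i,\, \beta \equiv j}\kernelJ(\alpha,\beta)^2.
\]
The bound $\Et \myxi(\lambda) = O(1)$ then follows from classical Bessel estimates: $\kernelJ(\alpha,\alpha)$ approaches $1$ (resp.\ $0$) rapidly once $\alpha \ll -2\sqrt t$ (resp.\ $\alpha \gg 2\sqrt t$) and is bounded in the transition region of width $O(t^{1/6})$, so $\kernelJ(\alpha,\alpha) - \ind[\alpha<0]$ is absolutely summable uniformly in $t$.

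The heart of the argument is the $\sqrt t$-asymptotic of $\sum_{\alpha \equiv i,\,\beta \equiv j} \kernelJ(\alpha,\beta)^2$. In the bulk $|\alpha|,|\beta|<2\sqrt t$, Plancherel--Rotach-type asymptotics for $J_n(2\sqrt t)$ with $n \sim c\sqrt t$ approximate $\kernelJ$ locally by a discrete sine kernel of varying density $\rho\bigl(\alpha/(2\sqrt t)\bigr)$, where $\rho(y) = \frac{1}{\pi}\arccos(y)$. Summing over $\beta \equiv j$ in a neighbourhood of $\alpha$ produces a local quantity depending only on $\rho$, and summing over $\alpha \equiv i$ in the bulk as a Riemann sum then gives
\[
\sum_{\alpha \equiv i,\, \beta \equiv j} \kernelJ(\alpha,\beta)^2 \sim \frac{\sqrt t}{e} \int_{-2}^{2} f_{i-j}\bigl(\rho(y/2)\bigr)\, dy,
\]
with $f_c(\rho) \coloneqq \sum_{m \in \Z} \sin^2\bigl(\rho\pi(em+c)\bigr) / \bigl[\pi^2(em+c)^2\bigr]$. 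Poisson summation (equivalently, the triangle-function Fourier transform of $\mathrm{sinc}^2$) turns $f_c(\rho)$ into a finite trigonometric polynomial in $\rho$; the change of variable $y = 2\cos(\pi\rho)$ then reduces the integral to elementary integrals which combine through $\cot A - \cot B = \sin(B-A)/(\sin A \sin B)$ into the announced cotangent difference.

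The main obstacle is ensuring that the Plancherel--Rotach approximation is uniform enough in the bulk to justify the Riemann-sum approximation at the sharp scale $\sqrt t$, and showing that contributions from the edge regions $|\alpha| \approx 2\sqrt t$ (of transition width $O(t^{1/6})$) are only $o(\sqrt t)$. This requires splitting the double sum into bulk and edge pieces with uniform bounds on products of Bessel functions in the transition regime, so that the edge contribution is negligible relative to the bulk's $\sqrt t$ leading order.
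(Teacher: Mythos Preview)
Your setup --- expressing $\myxi(\lambda)$ as a linear statistic of $\D(\lambda)$ and writing the first two moments via $\kernelJ$ --- matches the paper exactly (this is Lemma~\ref{lemma:x_i_card_Fr} and Proposition~\ref{proposition:covariance_sum_kernel}). The divergence is in how you extract the asymptotics from those sums.

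\textbf{Expectation.} Your argument for $\Et\myxi(\lambda)=O(1)$ contains a genuine error. You claim that $\kernelJ(\alpha,\alpha)-\ind[\alpha<0]$ is ``absolutely summable uniformly in $t$'', justifying this by saying the quantity is small outside a transition region of width $O(t^{1/6})$. This is false: the transition from $\kernelJ(\alpha,\alpha)\approx 1$ to $\kernelJ(\alpha,\alpha)\approx 0$ takes place over the entire bulk $|\alpha|\lesssim 2\sqrt t$, where $\kernelJ(\alpha,\alpha)\approx\rho\bigl(\alpha/(2\sqrt t)\bigr)\in(0,1)$, not over the $O(t^{1/6})$ Airy edge. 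Hence $\sum_{\alpha\equiv i}\bigl|\kernelJ(\alpha,\alpha)-\ind[\alpha<0]\bigr|$ is of order $\sqrt t$, not $O(1)$. The boundedness of the \emph{signed} sum is a cancellation phenomenon (coming from $\rho(-y)=1-\rho(y)$), and the paper obtains it not through asymptotics at all but via an exact generating-function identity (Lemma~\ref{lemma:sum_Jek+s}) that expresses $\sum_{m\in\vare\Z+k}J_m(x)\exp(\ic mt)$ as a finite trigonometric sum; see Proposition~\ref{proposition:Exi} and the discussion after it.

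\textbf{Covariance.} Here your route is genuinely different from the paper's and is, in outline, a valid alternative. The paper never invokes Plancherel--Rotach asymptotics: instead it differentiates $C_{i,j}(x)\coloneqq\sum_{m,n}\kJ(m,n)^2(x)$ in $x$, uses the integral representation $J_n(x)^2=\frac{2}{\pi}\int_0^{\pi/2}J_0(2x\sin\theta)\cos(2n\theta)\,d\theta$ together with Lemma~\ref{lemma:sum_Jek+s} to collapse the double sum into a finite sum of oscillatory integrals, and then applies the Riemann--Lebesgue lemma to show that $C'_{i,j}(x)$ tends to an explicit constant (Propositions~\ref{proposition:Dell_O(1)} and~\ref{proposition:omega_0_pi}). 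This avoids any uniformity issues in asymptotic approximations of $J_n$. Your approach --- local sine-kernel approximation, Riemann sum over the bulk, Poisson summation of $\mathrm{sinc}^2$ --- would in principle arrive at the same constant, and the Poisson/triangle-function step is a nice way to evaluate it. But you have only named, not resolved, the main obstacle: one needs Plancherel--Rotach bounds uniform across the bulk with an explicit error in the transition regions, strong enough that the Riemann-sum error and the edge contributions are both $o(\sqrt t)$. The paper's differentiation--plus--Riemann--Lebesgue argument sidesteps exactly this difficulty.
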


We can already deduce the following asymptotics (Proposition~\ref{proposition:limit_Et_core}):
\[
\Et|\overline{\lambda}| \sim \frac{2\sqrt t}{\pi}\cot \frac{\pi}{2e},
\]
as $t \to +\infty$. To compare with~\cite{lulov-pittel,ayyer-sinha}, when $\lambda$ in chosen under the uniform measure on $\partsn$ then $\E |\overline{\lambda}| \sim \frac{e-1}{2\pi}\sqrt{6n}$ as $n \to +\infty$.
We then show that we can apply the central limit theorem~\cite{soshnikov:gaussian} with the variables $\myxi(\lambda)$ for $i \in \Ze$ (Theorem~\ref{theorem:convergence_vector_xi}).

\begin{theoremet}
\label{theoremet:vector}
The random vector
\[
e\sqrt{\frac{\pi}{2}}\left(\frac{\myxi(\lambda)}{t^{1/4}}\right)_{i \in \Ze},
\]
converges in distribution as $t \to +\infty$ to the centred normal distribution with covariance matrix $\bigl(\cot(j-i+\half)\tfrac{\pi}{e} - \cot(j-i-\half)\tfrac{\pi}{e}\bigr)_{i,j \in \Ze}$.
\end{theoremet}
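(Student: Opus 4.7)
The plan is to deduce joint normality from a Cramér-Wold argument combined with Soshnikov's central limit theorem for linear statistics of determinantal point processes~\cite{soshnikov:gaussian,soshnikov:survey}, using Theorem~\ref{theoremet:et_covt} to identify the limit covariance.

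First I would realise each $\myxi(\lambda)$ as an affine linear statistic of the determinantal point process $\D(\lambda)$. The node counts $\ci(\lambda)$ can be read off from the descent set by sorting its elements according to their residue modulo~$e$; concretely, one expects a formula of the form
\[
\myxi(\lambda) = \#\bigl(\D(\lambda) \cap (e\Z + i)\bigr) - \#\bigl(\D(\lambda) \cap (e\Z + i + 1)\bigr) + \kappa_i,
\]
for an explicit constant $\kappa_i$, reflecting the fact that the $e$-abacus display of $\lambda$ encodes the residues modulo $e$ of its beta-numbers. (One uses the usual two-sided extension convention for $\D(\lambda)$ to make sense of these otherwise infinite counts, as implicit in the proof of Theorem~\ref{theoremet:et_covt}.) Hence, for every $\alpha \in \R^e$, the linear combination $\sum_{i}\alpha_i\myxi(\lambda)$ is a linear statistic $\sum_{x \in \D(\lambda)} f_\alpha(x)$ of $\D(\lambda)$ for a bounded, $e$-periodic test function $f_\alpha$ on $\Z$.

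Next I would apply Soshnikov's CLT to this statistic with the discrete Bessel kernel $\kernelJ$. As soon as $\alpha^\top \Sigma \alpha > 0$---where $\Sigma$ denotes the target covariance matrix of the statement---the variance of $\sum_i \alpha_i \myxi(\lambda)$ diverges at rate $\sqrt t$ by Theorem~\ref{theoremet:et_covt}, and Soshnikov's theorem yields
\[
\frac{\sum_i \alpha_i\myxi(\lambda) - \Et\sum_i\alpha_i\myxi(\lambda)}{\sqrt{\Vart\sum_i\alpha_i\myxi(\lambda)}} \xrightarrow[t\to+\infty]{d} \mathcal{N}(0,1).
\]
Using the $O(1)$ bound on $\Et\myxi(\lambda)$ to absorb the centering after dividing by $t^{1/4}$, and using the asymptotic $\Vart\bigl(\sum_i\alpha_i\myxi(\lambda)\bigr) \sim \frac{2\sqrt t}{\pi e^2}\,\alpha^\top \Sigma \alpha$ coming from the covariance formula of that theorem, this gives $e\sqrt{\pi/2}\,t^{-1/4}\sum_i \alpha_i\myxi(\lambda) \xrightarrow{d} \mathcal{N}(0,\alpha^\top\Sigma\alpha)$. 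The remaining, degenerate directions need separate treatment: the telescoping identity $\sum_{i \in \Ze}\myxi(\lambda) = 0$ forces $(1,\dots,1) \in \ker\Sigma$, and in that direction the rescaled sum is identically zero, trivially converging to the point mass at~$0$, which agrees with $\mathcal{N}(0,0)$. By the Cramér-Wold device, joint convergence of the rescaled vector to $\mathcal{N}(0,\Sigma)$ follows.

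The main obstacle I anticipate is the verification of the hypotheses of Soshnikov's theorem for the Bessel determinantal process against the $e$-periodic, non-decaying test functions $f_\alpha$: this requires uniform kernel estimates on $\kernelJ$ in long windows whose length is governed by~$t$, together with appropriate higher-cumulant bounds. These estimates are closely tied to the first-order asymptotics of Bessel functions already used for Theorem~\ref{theoremet:et_covt}, so the analytical groundwork should largely be available at this stage of the paper.
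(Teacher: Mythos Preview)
Your overall strategy is close to the paper's, but there is a genuine gap in the first step that the paper has to work around, and you glide over it.

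\medskip
\textbf{The representation step does not work as written.} You claim that $\sum_i\alpha_i\myxi(\lambda)$ is a linear statistic $\sum_{x\in\D(\lambda)} f_\alpha(x)$ for a bounded $e$-periodic $f_\alpha$. But $\D(\lambda)$ contains all sufficiently negative integers, so such a sum is infinite. What Lemma~\ref{lemma:x_i_card_Fr} actually gives is
\[
\myxi(\lambda)=\#\bigl((e\Z_{\geq 0}+i)\cap\D(\lambda)\bigr)-\#\bigl((e\Z_{<0}+i)\cap\D(\lambda)^c\bigr),
\]
a count of points on the right minus a count of \emph{holes} on the left; unfolding this into a pure linear statistic produces a divergent constant $\sum_{k<0}1$. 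Your parenthetical ``two-sided extension convention'' is not a fix: there is no reading of Soshnikov's CLT for linear statistics that applies directly to a non-decaying periodic test function on an infinite point configuration.

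\medskip
\textbf{How the paper handles this.} The paper truncates: it sets $\xrestr_i(\lambda)=\#\bigl((e\Z_{\geq -t^2}+i)\cap\D(\lambda)\bigr)-t^2$, which \emph{is} (up to a deterministic constant) a genuine count of points in a bounded window. Lemma~\ref{lemma:etr_0} shows $\Et|\xrestr_i-\myxi|^r=o(t^{-t})$ via a Chernoff bound on the Poisson size $|\lambda|$, so the truncation error is negligible after any polynomial normalisation. With this in hand, the $e$ windows $e\Z_{\geq -t^2}+i$ are disjoint and the paper applies the multivariate Costin--Lebowitz--Soshnikov CLT (Theorem~\ref{theorem:clt}) for counts in disjoint sets directly, rather than going through Cram\'er--Wold. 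The covariance asymptotics (Theorem~\ref{theorem:limit_covt}) identify the limiting matrix, Proposition~\ref{proposition:Et_bounded} kills the centering after dividing by $t^{1/4}$, and Slutsky transfers the conclusion from $\xrestr_i$ back to $\myxi$.

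\medskip
\textbf{Comparison.} Once you add the truncation, your Cram\'er--Wold route is viable and essentially equivalent to the paper's; the multivariate CLT in Theorem~\ref{theorem:clt} is itself proved by a cumulant/Cram\'er--Wold argument. The paper's formulation has the minor advantage that it avoids your separate treatment of degenerate directions: the hypothesis of Theorem~\ref{theorem:clt} only asks that each individual variance diverge, which Corollary~\ref{corollary:asymptotics_vart} gives, and the singular limiting covariance is simply what comes out. In your version you would also need to know that $\ker\Sigma$ is exactly $\R(1,\dots,1)$ (equivalently $\lambda_k>0$ for $k\neq 0$, cf.\ Lemma~\ref{lemma:eigenvalues_covariance}) or else argue the degenerate case via Chebyshev; either is easy, but it is an extra step.
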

 In particular, as $t \to +\infty$ the variables $t^{-1/4}\myxi(\lambda)$ and $t^{-1/4}\myxi[j](\lambda)$ for $i \neq j$ are not independent. We show in fact that the variables $\sum_{i = 0}^{e-1} \zeta^i t^{-1/4}\myxi(\lambda)$ for $\zeta \neq 1$ a complex $e$-th root of unity are (Gaussian and) mutually independent as $t \to +\infty$ .
We then deduce our main result, Theorem~\ref{theorem:size_core_plancherel}.

\begin{theoremet}
\label{theoremet:core}
Under the Poissonised Plancherel measure $\plt$, as $t \to +\infty$ the rescaled size $\frac{\pi}{4\sqrt t}|\overline\lambda|$ of the $e$-core of $\lambda$ converges  in distribution to a sum of mutually independent Gamma distributions $\Gamma\bigl(\frac{1}{2},\sin \frac{k\pi}{e}\bigr)$ for $k \in \{1,\dots,e-1\}$.
\end{theoremet}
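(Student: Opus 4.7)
The strategy is to combine the identity
\[
|\overline\lambda| = \frac{e}{2}\sum_{i \in \Ze}\myxi(\lambda)^2 + \sum_{i=0}^{e-1} i\,\myxi(\lambda)
\]
recalled in the introduction with the Gaussian vector convergence of Theorem~\ref{theoremet:vector}, then diagonalise the resulting quadratic form via the discrete Fourier transform on $\Ze$. After multiplication by $\frac{\pi}{4\sqrt t}$, the linear term equals $\frac{\pi}{4t^{1/4}}\sum_i i\bigl(t^{-1/4}\myxi(\lambda)\bigr)$; since each $t^{-1/4}\myxi(\lambda)$ converges in distribution by Theorem~\ref{theoremet:vector}, this linear piece tends to $0$ in probability as $t \to +\infty$. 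By Slutsky's theorem it suffices to identify the limit law of the quadratic piece $Q_t(\lambda) \coloneqq \frac{\pi e}{8\sqrt t}\sum_i \myxi(\lambda)^2 = \frac{1}{4e}\sum_i y_i(\lambda)^2$, where $y_i(\lambda) \coloneqq e\sqrt{\pi/2}\,t^{-1/4}\myxi(\lambda)$.

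Theorem~\ref{theoremet:vector} asserts the convergence in distribution of $(y_i(\lambda))_{i\in\Ze}$ to a centred Gaussian vector $Y = (Y_i)_{i\in\Ze}$ whose covariance matrix $M$ depends on $j-i \in \Ze$ only and is symmetric; the continuous mapping theorem applied to $y \mapsto \frac{1}{4e}\sum y_i^2$ yields $Q_t(\lambda) \xrightarrow{d} \frac{1}{4e}\sum_i Y_i^2$. Since $M$ is real symmetric circulant, setting $\zeta \coloneqq \exp(2\pi\ic/e)$ and $\hat Y_k \coloneqq \sum_i \zeta^{ik} Y_i$ for $k \in \Ze$, Parseval's identity gives $\sum_i Y_i^2 = \frac{1}{e}\sum_{k=0}^{e-1}|\hat Y_k|^2$, while the linear constraint $\sum_i\myxi(\lambda) = 0$ passes to the limit as $\hat Y_0 = 0$.

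Write $\mu_k$ for the $k$-th eigenvalue of $M$. The symmetry $M_{0,h} = M_{0,-h}$ implies, by a standard computation for circulant Gaussian covariances, that for $1 \leq k \leq e-1$ the real and imaginary parts of $\hat Y_k$ are independent centred Gaussians of common variance $\frac{e\mu_k}{2}$, except when $e$ is even and $k = e/2$, in which case $\hat Y_{e/2}$ is real of variance $e\mu_{e/2}$; moreover the family $(\hat Y_k)_k$ is independent across any transversal of the involution $k \mapsto e-k$ on $\{1,\dots,e-1\}$, which is exactly the independence claim stated in the introduction after Theorem~\ref{theoremet:vector}. Using that the square of a $\mathcal{N}(0,v)$ variable is $\Gamma(\half, 2v)$ and grouping conjugate indices, each orbit $\{k, e-k\}$ of size $2$ contributes to $\frac{1}{4e^2}\sum_{k=1}^{e-1}|\hat Y_k|^2$ two mutually independent copies of $\Gamma\bigl(\half, \frac{\mu_k}{2e}\bigr)$, while the self-paired orbit $\{e/2\}$ (when $e$ is even) contributes one such copy. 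Summing over orbits gives exactly $e-1$ mutually independent $\Gamma\bigl(\half, \frac{\mu_k}{2e}\bigr)$, one for each $k \in \{1,\dots,e-1\}$.

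The main technical step is the explicit evaluation
\[
\mu_k = 2e\sin(k\pi/e), \qquad k \in \{1,\dots,e-1\},
\]
which converts the scale parameters into $\sin(k\pi/e)$ as required. I plan to obtain it by Abel summation: the summand defining $\mu_k$ is a telescoping difference $g(h) - g(h-1)$ with $g(h) = \cot\bigl((2h+1)\tfrac{\pi}{2e}\bigr)$, giving $\mu_k = (1 - \zeta^k)\sum_h \zeta^{hk}g(h)$. Using $\cot z = \ic + 2\ic/\bigl(\exp(2\ic z) - 1\bigr)$ (the constant $\ic$ term vanishes because $\sum_\zeta \zeta^{hk} = 0$ for $k \neq 0$), the inner sum reduces to a sum of a rational function over the $e$-th roots of unity, which I evaluate via the geometric-series identity $\sum_{x^e = 1}\frac{x^k}{1-qx} = -\frac{eq^{e-k}}{1-q^e}$ for $1 \leq k \leq e-1$. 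Specialising at $q = \exp(\ic\pi/e)$, for which $q^e = -1$, and simplifying yields the stated sine. This final computation is expected to be the main obstacle, but it is purely elementary once the Fourier setup is in place.
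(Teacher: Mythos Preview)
Your overall strategy coincides with the paper's: both start from the quadratic identity for $|\overline\lambda|$, discard the linear term via Slutsky, apply Theorem~\ref{theoremet:vector} and the continuous mapping theorem to reduce to $\frac{1}{4e}\|Y\|^2$ for a centred Gaussian $Y$ with circulant covariance, and then diagonalise. The difference lies in how the diagonalisation and the eigenvalue computation are carried out.

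The paper diagonalises by an unspecified real orthogonal matrix and computes the eigenvalues $\lambda_k = 2e\sin(k\pi/e)$ via a contour integral: it integrates a product of two meromorphic functions (one encoding the cotangent differences, the other a Fourier-type factor) around a period rectangle and reads off the eigenvalue from the residue balance. Your route is more explicit and entirely elementary: you use the discrete Fourier transform on $\Ze$, which is the canonical diagonalisation of a circulant matrix, handle the resulting complex Gaussian structure carefully (real/imaginary parts for non-self-conjugate indices, the real case at $k=e/2$), and obtain the eigenvalues by Abel summation followed by the root-of-unity identity $\sum_{x^e=1}\frac{x^k}{1-qx}=\frac{eq^{e-k}}{1-q^e}$ specialised at $q=\exp(\ic\pi/e)$. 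Note the sign: the correct identity has $+eq^{e-k}$, not $-eq^{e-k}$ as you wrote; with the correct sign your chain gives $\mu_k=-\ic e(q^{e-k}-q^{e+k})=2e\sin(k\pi/e)$ on the nose. Your approach avoids residue calculus entirely and makes the independence structure of the limiting Gammas completely transparent through the Fourier coefficients, at the cost of a slightly more delicate bookkeeping of the conjugate pairs $\{k,e-k\}$; the paper's contour argument is shorter for the eigenvalue itself but less self-contained.
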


 The difference between our main theorem and the result of~\cite{lulov-pittel,ayyer-sinha} is twofold:
\begin{itemize}
\item our convergence is for the Poissonised Plancherel measure $\plt$ on $\parts$  as $t \to +\infty$, and the not for the uniform measure on $\partsn$ as $n \to +\infty$,
\item the Gamma distribution $\Gamma\bigl(\frac{e-1}{2},\sqrt 6\bigr) = \sum_{k = 1}^{e-1} \Gamma\bigl(\frac{1}{2},\sqrt 6\bigr)$ is for us split into $e-1$ summands with different scale parameters.
\end{itemize}
Note that the order $\sqrt t$ of $|\overline\lambda|$ corresponds to the order $\sqrt n$ of~\cite{lulov-pittel,ayyer-sinha}, at it was already the case for the expectation. As in~\cite{boo,johansson} it should also be possible to state Theorem~\ref{theoremet:core} for the Plancherel measure $\pl$ for $n \to +\infty$. This process, the so-called \emph{de-Poissonisation}, will not be considered here. Note that, as it is mentioned in~\cite{costin-lebowitz,soshnikov:gaussian}, the convergence in the central limit theorem holds in fact in moments, so that we deduce
\[
\Vart|\overline{\lambda}| \sim  \frac{4et}{\pi^2},
\]
as $t \to +\infty$ (Corollary~\ref{corollary:limit_variance}).

\medskip

Let us conclude with two variations on our problem. First, as we already mentioned, the Plancherel measure
on partitions is connected with other probabilistic objects such as longest subsequences or
random matrices. To the author's knowledge, it is not known whether the $e$-core of partitions under
the Plancherel measure behaves well with these connections.
Second, besides the asymptotic behaviour of the $e$-core under the (Poissonised) Plancherel measure, one can also study the \emph{$e$-quotient} (see, for instance,~\cite[\textsection 2.7]{james-kerber:representation}). The $e$-quotient of a partition is an $e$-tuple of partitions, whose construction is complementary to the $e$-core of the partition; namely, the data of the $e$-core and the $e$-quotient allows to recover the (unique) underlying partition. The same questions as for the $e$-core can be studied (\textit{e.g.} size and size of the parts) but it should also be interesting to study the behaviour of the descent sets, namely, whether they are determinantal.

\bigskip

We now give the outline of the paper. In Section~\ref{section:setting} we recall the result from Borodin-Okounkov-Olshanski saying that $\D(\lambda)$ is a determinantal point process under $\plt$.
Section~\ref{section:cores} is devoted to the combinatorics of $e$-cores. In Definition~\ref{definition:xi} we define the variables $\myxi(\lambda) \coloneqq \ci(\lambda) - \ci[i+1](\lambda)$, which we relate in~\eqref{equation:size_core} to $|\overline\lambda|$ by a quadratic polynomial and in  Lemma~\ref{lemma:x_i_card_Fr} to $\D(\lambda)$ in a view to applying the central limit theorem of Costin--Lebowitz and Soshnikov for determinantal point process.
Section~\ref{section:covariance}  is devoted to expectation and covariance calculations. In~\textsection\ref{subsection:clt} we show how we will apply the central limit theorem to $\myxi(\lambda)$.
In~\textsection\ref{subsection:expectation} we prove that the expectation $\Et\myxi(\lambda)$ of $\myxi(\lambda)$ under the Poissonised Plancherel measure $\plt$ is bounded (Proposition~\ref{proposition:Et_bounded}).
 Subsection~\textsection\ref{subsection:asymptotics_covariance} is the technical heart of the paper. We compute the first order asymptotics of the covariance $\Covt\bigl(\myxi(\lambda),\myxi[j](\lambda)\bigr)$  under $\plt$ for $i \neq j \in \Ze$ (Theorem~\ref{theorem:limit_covt}). We deduce in~\textsection\ref{subsection:asymptotics_variance} the first order asymptotics of the variance $\Vart \myxi(\lambda)$ under $\plt$ (Corollary~\ref{corollary:asymptotics_vart}).  In Section~\ref{section:size_core} we apply the preceding results to study the asymptotics of $|\overline{\lambda}|$ when $\lambda$ is chosen under the Poissonised Plancherel measure $\plt$ as $t \to +\infty$. We first obtain in Proposition~\ref{proposition:limit_Et_core} the asymptotics of the expectation $\Et|\overline{\lambda}|$. Using the central limit theorem of Costin--Lebowitz and Soshnikov, we obtain in Theorem~\ref{theorem:convergence_vector_xi} that the random vector $\bigl(\myxi(\lambda)\bigr)_{i \in \Ze}$ converges in distribution to a certain centred normal vector. Then using a contour integration to compute the eigenvalues of the covariance matrix, we deduce our main result, Theorem~\ref{theorem:size_core_plancherel}, which says  that $|\overline{\lambda}| / \sqrt t$ converges in distribution under $\plt$ as $t \to +\infty$ to a sum of squares of mutually independent centred Gaussian variables, that is, a sum of mutually independent Gamma variables of shape $\frac{1}{2}$. Since the convergence in the central limit theorem holds in moments, we deduce in Corollary~\ref{corollary:limit_variance} the asymptotics of the variance $\Vart|\overline{\lambda}|$.
 Finally, in Section~\ref{section:inodes} we apply the preceding results to compute the first order asymptotics of $\ci(\lambda)$ under the Poissonised Plancherel measure $\plt$ (Proposition~\ref{proposition:ci/t}). It turns out that we can recover this result in the (non-Poissonised) Plancherel  setting (Proposition~\ref{proposition:ci/n}) using the law of large numbers of Logan--Shepp and Kerov--Vershik.

\paragraph{Acknowledgements}
The author is thankful to Jean-Christophe Breton, Valentin Féray, Cédric Lecouvey and Pierre-Loïc Méliot  for many useful discussions. The author is supported by the Agence Nationale de la Recherche funding ANR CORTIPOM 21-CE40-0019. The author also thanks the Centre Henri Lebesgue ANR-11-LABX-0020-0.

\section{Setting}
\label{section:setting}

Let $n \in \Z_{\geq 0}$.

\subsection{Partitions}
\label{subsection:young}

\paragraph{Partitions}
A \emph{partition} of $n$ is a non-increasing sequence $\lambda = (\lambda_1 \geq \dots \geq \lambda_{h(\lambda)} > 0)$ of positive integers with sum $|\lambda| \coloneqq \sum_{a = 1}^{h(\lambda)} \lambda_a = n$, where $h(\lambda) \geq 0$. We denote by $\partsn$ the set of partitions of $n$ and $\parts \coloneqq \sqcup_{n \geq 0} \partsn$. The \emph{conjugate} of $\lambda$, denoted by~$\lambda'$, is the partition given by $\lambda'_a = \#\{b \geq 1 : \lambda_b \geq a\}$ for all $a \in \{1,\dots,\lambda_1\}$.

\paragraph{Young diagrams}

The \emph{Young diagram} of $\lambda\in\partsn$ is the subset $\mathcal{Y}(\lambda)$ of $\Z_{\geq 1}^2$ given by
\[
\mathcal{Y}(\lambda) \coloneqq \left\{(a,b) \in \Z_{\geq 1}^2 : 1 \leq a \leq h(\lambda) \text{ and } 1 \leq b \leq \lambda_a\right\}.
\]

\begin{example}
The Young diagram associated with the partition $(4,3,2,2)$ is $\ydiagram{4,3,2,2}$ .
\end{example}
 The Young diagram of $\lambda'$ is obtained by flipping $\mathcal{Y}(\lambda)$ with respect to the diagonal, in other words, we have $(a,b) \in \mathcal{Y}(\lambda') \iff (b,a) \in \mathcal{Y}(\lambda)$.

 A \emph{standard tableau of shape $\lambda$} is a bijection $\mathfrak{t} : \mathcal{Y}(\lambda) \to \{1,\dots,n\}$ such that $\mathfrak{t}$ increases along the rows and down the columns, in other words for $(a,b) \in \mathcal{Y}(\lambda)$ we have $\mathfrak{t}(a,b) < \mathfrak{t}(a+1,b)$ if $(a+1,b) \in \mathcal{Y} (\lambda)$ and $\mathfrak{t}(a,b) < \mathfrak{t}(a,b+1)$ if $(a,b+1) \in \mathcal{Y}(\lambda)$. We denote by $\mathrm{Std}(\lambda)$ the set of standard Young tableaux of shape $\lambda$. Note that $\mathrm{Std}(\lambda)$ and $\mathrm{Std}(\lambda')$ are naturally in bijection.

\paragraph{Russian convention}
Rotating the Young diagram of $\lambda'$ by an angle of $\frac{3\pi}{4}$ and applying a linear homothety of ratio $\sqrt 2$ gives the \emph{Russian convention} for the Young diagram of $\lambda$. Note that the node $(a,b) \in \mathcal{Y}(\lambda)$ corresponds to the (square) box with opposite vertices $(a-1,b-1)$ and $(a,b)$.
 We denote by $\omega_\lambda : \mathbb{R} \to \mathbb{R}$ the upper rim of the resulting diagram, extending $\omega_\lambda$ by $\omega_\lambda(x) \coloneqq |x|$ outside the diagram.
Then $\omega_\lambda$ is a  continuous piecewise linear function such that:
\begin{itemize}
\item for each $k \in \Z$ we have
\begin{equation}
\label{equation:omega'}
\omega'_\lambda|_{(k,k+1)} = \pm 1,
\end{equation}
\item we have $\omega_\lambda(x) = |x|$ for $|x| \gg 0$ (more precisely, for $x \leq -\lambda'_1$ or $x \geq \lambda_1$),
\item we have $\int_{\mathbb{R}} \bigl[\omega_\lambda(x) - |x|\bigr] dx = 2n$, in particular each box in the Young diagram of $\lambda$ in the Russian convention has area $2$ and semi-diagonal length $1$.
\end{itemize}
An illustration of the construction of $\omega_\lambda$ is given in Figure~\ref{figure:omegalambda}.
Note that for any $x \in \mathbb{R}$ we have
\begin{equation}
\label{equation:omegalambda'}
\omega_{\lambda'}(x) = \omega_\lambda(-x).
\end{equation}

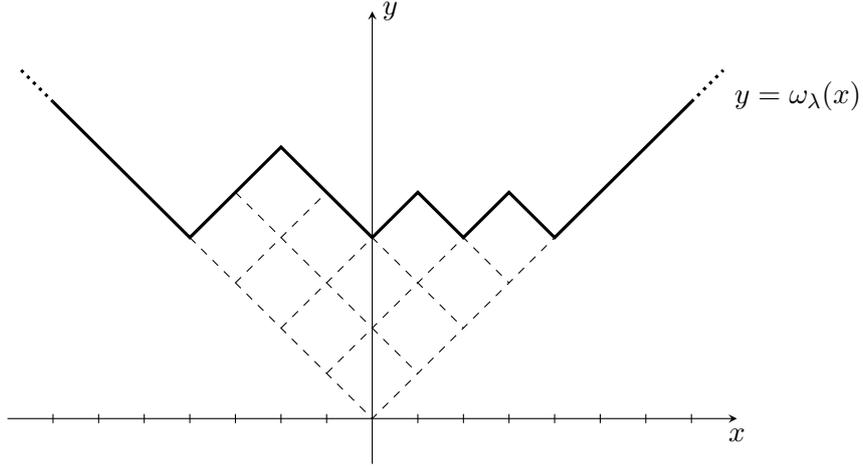
\begin{figure}
\begin{center}
\begin{tikzpicture}[scale=.6]
\draw[-stealth] (-8,0) -- (8,0) node[below]{$x$};
\draw[-stealth] (0,-1) -- (0,9) node[right]{$y$};
\foreach \i in {-7,-6,-5,-4,-3,-2,-1,1,2,3,4,5,6,7}
	{\draw (\i,-.1) --++ (0,.2);}

\draw[very thick] (-7,7) -- (-4,4) -- (-2,6) -- (0,4) -- (1,5) -- (2,4) -- (3,5) -- (4,4) -- (7,7);
\draw[dotted, very thick] (-7.7,7.7) -- (-7,7);
\draw[dotted, very thick] (7,7) -- (7.7,7.7) node[below right]{$y=\omega_\lambda(x)$};

\draw[dashed] (-4,4) -- (0,0) -- (4,4);
\draw[dashed] (-3,5) -- (0,2) -- (2,4) --++ (1,-1);
\draw[dashed] (-3,3) --++ (2,2);
\draw[dashed] (-2,2) --++ (2,2) --++ (2,-2);
\draw[dashed] (-1,1) --++ (1,1) --++ (1,-1);
\end{tikzpicture}
\end{center}
\caption{Russian convention for the Young diagram of $\lambda = (4,3,2,2)$}
\label{figure:omegalambda}
\end{figure}

\paragraph{Descent set}

Yet another way to look at a partition $\lambda$ is to consider its \emph{descent set}, or \emph{$\beta$-set} (in the terminology of~\cite{boo} and~\cite{olsson}, respectively), defined as
\[
\D(\lambda) = \bigl\{ \lambda_a - a : a \geq 1 \bigr\} \subseteq \Z,
\]
with $\lambda_a \coloneqq 0$ if $a > h(\lambda)$.  The set $\D(\lambda) \cap \Z_{\geq 0}$ is finite, while $\D(\lambda) \supseteq \Z_{< -h(\lambda)}$. The terminology \emph{descent set} is justified by the following:
\begin{equation}
\label{equation:D(lambda)_omegalambda}
k \in \D(\lambda) \iff \omega_\lambda(k) > \omega_\lambda(k+1),
\end{equation}
for any $k \in \Z$. If particular, using~\eqref{equation:omegalambda'} we recover the following classical relation:
\begin{equation}
\label{equation:D(lambda')}
\D(\lambda') = -\D(\lambda)^c -1.
\end{equation}
An example of descent set is given in Figure~\ref{figure:descent_set}.

\begin{figure}
\begin{center}
\begin{tikzpicture}[scale=.6]
\draw[-stealth] (-8,0) -- (8,0) node[below]{$x$};
\draw[-stealth] (0,-1) -- (0,9) node[right]{$y$};
\foreach \i in {-7,-6,-5,-4,-3,-2,-1,1,2,3,4,5,6,7}
	{\draw (\i,-.1) --++ (0,.2);}

\draw[very thick] (-7,7) -- (-4,4) -- (-2,6) -- (0,4) -- (1,5) -- (2,4) -- (3,5) -- (4,4) -- (7,7);
\draw[dotted, very thick] (-7.7,7.7) -- (-7,7);
\draw[dotted, very thick] (7,7) -- (7.7,7.7) node[below right]{$y=\omega_\lambda(x)$};

\draw[gray,dashed] (-4,4) -- (0,0) -- (4,4);
\draw[gray,dashed] (-3,5) -- (0,2) -- (2,4) --++ (1,-1);
\draw[gray,dashed] (-3,3) --++ (2,2);
\draw[gray,dashed] (-2,2) --++ (2,2) --++ (2,-2);
\draw[gray,dashed] (-1,1) --++ (1,1) --++ (1,-1);

\draw[dashed, blue] (3,5) -- (3,0);
\draw[dashed, blue] (1,5) -- (1,0);
\draw[dashed, blue] (-1,5) -- (-1,0);
\draw[dashed, blue] (-2,6) -- (-2,0);
\foreach \i in {-7,-6,-5}
	{\draw[dashed, blue] (\i,-\i) -- (\i,0);}

\foreach \i in {-7,-6,-5,-2,-1,1,3}
	{\fill[blue] (\i,0) circle (.1);}
\end{tikzpicture}
\end{center}
\caption{Descent set $\D(\lambda) = \{3,1,-1,-2,-5,-6,-7,\dots\}$ for $\lambda= (4,3,2,2)$}
\label{figure:descent_set}
\end{figure}
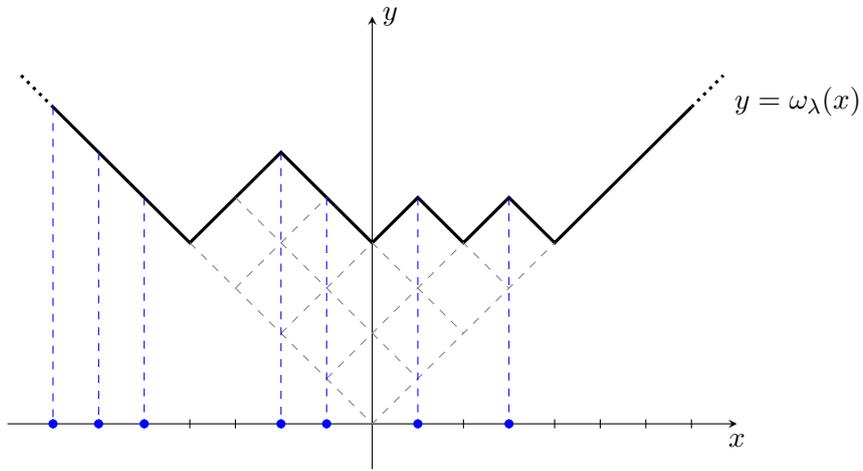

\subsection{Plancherel measure}

\paragraph{Plancherel measure}

Recall from the introduction the following identity:
\begin{equation}
\label{equation:burnside}
n! = \sum_{\lambda \in \partsn} \#\mathrm{Std}(\lambda)^2.
\end{equation}

\begin{definition}
The \emph{Plancherel measure} on the set $\partsn$ of the partitions of $n$ is given by
\[
\pl(\lambda) \coloneqq \frac{\#\mathrm{Std}(\lambda)^2}{n!},
\]
for all $\lambda \in \partsn$.
\end{definition}

Recalling that $\lambda'$ denotes the conjugate partition of $\lambda$, we have $\#\mathrm{Std}(\lambda) = \#\mathrm{Std}(\lambda')$ thus
\begin{equation}
\label{equation:pl(ld)=pl(ld')}
\pl(\lambda) = \pl(\lambda').
\end{equation}

\begin{definition}
Let $t > 0$. The \emph{Poissonised} Plancherel measure $\plt$ on the set $\parts$ of all partitions is given for $\lambda \in \parts$  by
\[
\plt(\lambda) \coloneqq \exp(-t) t^{\lvert \lambda\rvert} \left(\frac{\#\mathrm{Std}(\lambda)}{\lvert \lambda\rvert !}\right)^2.
\]
\end{definition}
In other words, setting $\pl(\lambda) \coloneqq 0$ if $|\lambda| \neq n$, we have
\[
\plt = \exp(-t) \sum_{n \geq 0} \frac{t^n}{n!}\pl.
\]
We use the notation $\exp$ for the exponential function since the notation $e$ will be used for an integer later (see~\textsection\ref{section:cores}).
It follows from~\eqref{equation:pl(ld)=pl(ld')} that if $\Lambda$ is any set of partitions and $\Lambda' \coloneqq \{\lambda' : \lambda \in \Lambda\}$ then
\begin{equation}
\label{equation:pl(Lambda)}
\plt(\Lambda) = \plt(\Lambda').
\end{equation}

\paragraph{Correlation function}

\begin{definition}
\label{definition:correlation_function}
The \emph{correlation function} $\rhot$ is defined for any finite subset $X  \subseteq \Z$ by
\[
\rhot(X) \coloneqq \plt\bigl\{\lambda \in \parts : X \subseteq \D(\lambda)\bigr\}.
\]
\end{definition}

In the sequel, when we write $X = \{x_1,\dots,x_s\} \subseteq \Z$ then it will always be understood that $x_a \neq x_b$ if $a \neq b$. In particular, we write
\[
\rhot(x_1,\dots,x_s) \coloneqq \rhot(X).
\]
Note that~\eqref{equation:pl(Lambda)} gives that we also have
\begin{equation}
\label{equation:corr_lambda'}
\rhod^t(X) = \plt\left\{\lambda \in \parts : X \subseteq \D(\lambda')\right\}.
\end{equation}

\begin{lemma}
\label{lemma:1-rho}
For any $k \in \Z$ we have
\[
1-\rhod^t(k)=\rhod^t(-k-1).
\]
\end{lemma}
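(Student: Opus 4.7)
The plan is to exploit the two facts already in the excerpt: the conjugation identity $\D(\lambda') = -\D(\lambda)^c - 1$ and the conjugation-invariance of the correlation function, $\rhot(X) = \plt\{\lambda : X \subseteq \D(\lambda')\}$ (which itself comes from $\plt(\lambda)=\plt(\lambda')$).

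First I would rewrite $\rhot(-k-1)$ using the alternative expression for the correlation function: $\rhot(-k-1) = \plt\{\lambda : -k-1 \in \D(\lambda')\}$. Then I would translate the membership condition via the conjugation identity: $-k-1 \in \D(\lambda') = -\D(\lambda)^c - 1$ if and only if $-(-k-1)-1 = k \in \D(\lambda)^c$, i.e. $k \notin \D(\lambda)$.

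Combining these, $\rhot(-k-1) = \plt\{\lambda : k \notin \D(\lambda)\} = 1 - \plt\{\lambda : k \in \D(\lambda)\} = 1 - \rhot(k)$, which is the desired identity. There is no genuine obstacle here — the lemma is a direct bookkeeping consequence of $\D(\lambda') = -\D(\lambda)^c - 1$ together with $\plt(\lambda)=\plt(\lambda')$; the only care required is getting the offset $-k-1$ right when inverting the map $k \mapsto -k-1$, but this is routine.
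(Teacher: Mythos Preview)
Your proof is correct and matches the paper's approach exactly: both use the conjugation identity $\D(\lambda') = -\D(\lambda)^c - 1$ together with the conjugation-invariance of the correlation function. The only cosmetic difference is direction---the paper starts from $1-\rhot(k)$ and arrives at $\rhot(-k-1)$, whereas you start from $\rhot(-k-1)$ and arrive at $1-\rhot(k)$.
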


\begin{proof}
By~\eqref{equation:D(lambda')} we have
\begin{align*}
1 - \rhod^t(k)
&=
1-\plt\left(k \in \D(\lambda)\right)
\\
&=
\plt\left(k \in \D(\lambda)^c\right)
\\
&=
\plt\left(-k-1 \in -\D(\lambda)^c-1\right)
\\
&=
\plt\left(-k-1 \in \D(\lambda')\right),
\end{align*}
and we conclude by~\eqref{equation:corr_lambda'}.
\end{proof}

\paragraph{Discrete Bessel kernel}
For any $x \in \R$, we denote by $J_x$ the Bessel function of order $x$ (see~\cite[\textsection 10]{nist}) and we define $\besselJ_x \coloneqq J_x(2\sqrt t)$. We will also write $\besselsquare{x} \coloneqq {(\besselJ_x)}^2$. 

\begin{definition}[\protect{\cite{boo,johansson}}]
The \emph{discrete Bessel kernel} is defined for any $x, y \in \R$ by
\[
\kernelJ(x,y)\coloneqq \sqrt t \frac{\besselJ_x \besselJ_{y+1} - \besselJ_{x+1} \besselJ_y}{ x- y}.
\]
\end{definition}
Note that the diagonal values $\kernelJ(x,x)$ are well-defined since $\besselJ_x$ is an entire function of $x$. For instance, we have
 \[
 \kernelJ(x,x) = 
\sqrt{t}\left[\besselJ[L]_x \besselJ_{x+1} - \besselJ_x \besselJ[L]_{x+1}\right],
\]
where $\besselJ[L]_x = \frac{\partial}{\partial x} \besselJ_x$ (see~\cite[(3.27)]{johansson}). The next result shows that, under $\plt$, the point process $\D(\lambda)$ is determinantal.

\begin{theorem}[\protect{\cite[Theorem 2]{boo}}]
\label{theorem:BOO}
For any $\{x_1,\dots,x_s\} \subseteq \Z$ we have
\[
\rhot(x_1,\dots,x_s) = \det \bigl[\kernelJ(x_a,x_b)\bigr]_{1\leq a,b\leq s}.
\]
\end{theorem}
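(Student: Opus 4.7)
The plan is to recognise $\plt$ as a \emph{Schur measure} in the sense of Okounkov, then invoke the general fact that every Schur measure yields a determinantal point process on the shifted descent set, and finally evaluate the resulting double contour integral to recover the discrete Bessel kernel.

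First I would rewrite the Poissonised Plancherel weight as a product of two Schur polynomials. Under the power-sum specialization $p_1 = \sqrt t$, $p_k = 0$ for $k\geq 2$, the hook-content / exponential-specialization formula gives $s_\lambda = \sqrt{t}^{|\lambda|}\,\#\mathrm{Std}(\lambda)/|\lambda|!$. Consequently
\[
\plt(\lambda) = \exp(-t)\, s_\lambda(p_1=\sqrt t)^2,
\]
and the normalising factor $\exp(-t)$ matches the Cauchy identity $\sum_\lambda s_\lambda(x)s_\lambda(y) = \prod_{i,j}(1-x_iy_j)^{-1}$ evaluated at the double specialization (which equals $\exp(t)$). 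This identifies $\plt$ as the Schur measure with $\Phi(z) = \Psi(z) = \exp(\sqrt t\, z)$.

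Second I would apply Okounkov's theorem on Schur measures: for generic $(\Phi,\Psi)$, the shifted descent set $\D(\lambda) + \half \subseteq \Z + \half$ is a determinantal point process, with correlation kernel
\[
K(x,y) \;=\; \frac{1}{(2\pi\ic)^2}\oint\!\!\oint \frac{\Phi(z)\,\Psi(1/w)}{\Phi(w)\,\Psi(1/z)}\,\frac{\sqrt{zw}}{z-w}\,\frac{dz\,dw}{z^{x+1/2}\,w^{-y-1/2}},
\]
the contours encircling the origin with $|z|>|w|$. The cleanest route to this identity is the infinite-wedge realisation: the square root of the Plancherel weight is the matrix coefficient $\langle\lambda|\exp(\sqrt t\,\alpha_{-1})|0\rangle$ on charge-zero Fock space, and Wick's theorem applied to the two-point fermion correlator $\langle 0|\exp(\sqrt t\,\alpha_1)\,\psi_x\,\psi_y^*\,\exp(\sqrt t\,\alpha_{-1})|0\rangle$ yields simultaneously the determinantal structure and the kernel above. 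Alternatively one can use the Lindström–Gessel–Viennot lemma applied to the pairs of non-intersecting lattice paths produced by the RSK correspondence, reducing to a biorthogonal ensemble.

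Third I would specialise to the Plancherel case and evaluate. The generating identity
\[
\exp\!\bigl(\sqrt t\,(z-1/z)\bigr) \;=\; \sum_{n\in\Z} J_n(2\sqrt t)\,z^n
\]
expresses the Bessel functions as Fourier coefficients of the integrand. Substituting $\Phi(z)/\Phi(w) = \exp(\sqrt t\,z)\exp(-\sqrt t\,w)$ and $\Psi(1/w)/\Psi(1/z) = \exp(\sqrt t/w)\exp(-\sqrt t/z)$ separates the integrand into a product, and the remaining kernel $\sqrt{zw}/(z-w)$, expanded geometrically on $|z|>|w|$, combines two pairs of Bessel coefficients into a Christoffel–Darboux-type sum. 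After absorbing the $\half$-shift by relabelling indices, the result is
\[
K(x,y) \;=\; \sqrt t\,\frac{\besselJ_x\,\besselJ_{y+1} - \besselJ_{x+1}\,\besselJ_y}{x-y} \;=\; \kernelJ(x,y),
\]
and the determinantal formula for $\rhot(x_1,\dots,x_s)$ follows.

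The main obstacle is the second step: establishing the determinantal structure for a general Schur measure is the substantial piece of theory — either through a careful fermionic calculus on the infinite wedge or through a combinatorial/LGV reduction. The first and third steps are essentially bookkeeping once that framework is in place.
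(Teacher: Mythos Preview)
The paper does not prove this statement at all: Theorem~\ref{theorem:BOO} is quoted verbatim from~\cite[Theorem~2]{boo} and used as a black box throughout. There is therefore no ``paper's own proof'' to compare against.

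That said, your sketch is a sound outline of one of the standard proofs --- essentially Okounkov's Schur-measure/infinite-wedge argument rather than the original route in~\cite{boo}, which proceeds instead via a limit from the so-called $z$-measures and their hypergeometric kernel (Johansson~\cite{johansson} gave a third independent proof through discrete orthogonal polynomial ensembles). Your identification of $\plt$ as the Schur measure with exponential specialisation is correct, and the contour-integral evaluation in the third step does collapse to the discrete Bessel kernel as you indicate. The honest caveat you already flag is the right one: the second step --- determinantal structure for general Schur measures via Wick's theorem on Fock space --- is where all the work lies, and what you have written is a pointer to that theory rather than a proof. For the purposes of this paper that is entirely appropriate, since the result is only being cited.
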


\begin{remark}
Under $\pl$, the process $\D(\lambda)$ is a priori not determinantal; however~\cite{boo} proved namely that, under some assumptions,
\[
\pl\bigl\{\lambda \in \partsn : x_1(n),\dots,x_s(n) \in \D(\lambda)\bigr\} \xrightarrow{n \to +\infty}\det \bigl[\mathsf{S}(d_{ab},y)\bigr]_{1\leq a,b\leq s},
\]
where $\mathsf{S}$ is the \emph{discrete sine kernel}, where $y$ and $(d_{ab})_{ab}$ are determined by $\bigl(x_a(n)\bigr)_{a,n}$.
\end{remark}

We now recall some properties of the kernel $\kernelJ$. 

\begin{lemma}
The kernel $\kernelJ$ is symmetric: for all $x,y \in \Z$ we have $\kernelJ(x,y) = \kernelJ(y,x)$.
\end{lemma}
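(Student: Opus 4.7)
The claim is essentially a direct computation from the definition. Swapping $x$ and $y$ in the formula
\[
\kernelJ(x,y) = \sqrt{t}\,\frac{\besselJ_x \besselJ_{y+1} - \besselJ_{x+1} \besselJ_y}{x-y}
\]
simultaneously negates the numerator (since $\besselJ_y \besselJ_{x+1} - \besselJ_{y+1}\besselJ_x = -(\besselJ_x \besselJ_{y+1} - \besselJ_{x+1}\besselJ_y)$) and the denominator (since $y-x = -(x-y)$). The two signs cancel, yielding $\kernelJ(y,x) = \kernelJ(x,y)$ whenever $x \neq y$.

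The only subtlety concerns the case $x = y$, where the quotient is indeterminate and $\kernelJ(x,x)$ is defined by the limit that exists because $\besselJ_x$ is entire in $x$. But symmetry is trivial here since $\kernelJ(x,x) = \kernelJ(x,x)$ tautologically; alternatively, taking the limit $y \to x$ in the identity established for $x\neq y$ preserves equality.

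Given how short this is, the main thing to be careful about is simply presenting the sign-cancellation cleanly on one line, rather than any genuine obstacle. I expect the entire proof to be two lines of algebra plus a one-sentence comment on the diagonal case.
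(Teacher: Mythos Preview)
Your proof is correct; the sign cancellation between numerator and denominator is exactly the point, and your handling of the diagonal case is fine. The paper states this lemma without proof, presumably because the computation you give is immediate from the definition, so your argument is precisely what is intended.
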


\begin{lemma}[\protect{\cite[Proposition 2.9]{boo}, \cite[(3.31)]{johansson}}]
\label{lemma:kernelJ_sum}
For all $x,y\in\Z$ we have
\[
\kernelJ(x,y) = \sum_{s \geq 1} \besselJ_{x+s}\besselJ_{y+s}.
\]
In particular,
\[
\kernelJ(x,x) = \sum_{s \geq 1} \besselsquare{x+s}.
\]
\end{lemma}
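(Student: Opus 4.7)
The plan is to derive the identity as a telescoping sum arising from the three-term recurrence for Bessel functions. Recall that
\[
J_{\nu-1}(z) + J_{\nu+1}(z) = \frac{2\nu}{z} J_\nu(z),
\]
and specialising to $z = 2\sqrt t$ and $\nu = x+s$ gives $(x+s)\besselJ_{x+s} = \sqrt t\bigl(\besselJ_{x+s-1} + \besselJ_{x+s+1}\bigr)$. Multiplying by $\besselJ_{y+s}$ and writing the analogous identity with $x$ and $y$ swapped, then subtracting, the factor of $(x+s) - (y+s) = x - y$ appears on the left, and the right-hand side rearranges into a difference of two ``Wronskian-like'' terms.

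More precisely, setting $c_s \coloneqq \besselJ_{x+s}\besselJ_{y+s+1} - \besselJ_{x+s+1}\besselJ_{y+s}$, the first step is to check that
\[
(x-y)\,\besselJ_{x+s}\besselJ_{y+s} = \sqrt t\,(c_{s-1} - c_s).
\]
This is the heart of the argument; it reduces to grouping the four terms obtained from the two instances of the recurrence into the two Wronskian-type differences $c_{s-1}$ and $-c_s$. The second step is to sum this equality over $s \geq 1$: the right-hand side telescopes to $\sqrt t\bigl(c_0 - \lim_{s \to +\infty} c_s\bigr)$, and $c_0 = \besselJ_x\besselJ_{y+1} - \besselJ_{x+1}\besselJ_y$ is exactly the numerator in the definition of $\kernelJ(x,y)$. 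Dividing by $x-y$ when $x \neq y$ yields the stated formula.

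The remaining technical points are convergence and the diagonal case. Since for fixed $z > 0$ one has $J_\nu(z) = O\bigl(1/\Gamma(\nu+1)\bigr)$ as $\nu \to +\infty$, both the series $\sum_{s \geq 1}\besselJ_{x+s}\besselJ_{y+s}$ and the sequence $c_s$ are absolutely convergent, and $c_s \to 0$. This justifies both the telescoping and the vanishing of the boundary term at infinity. For $x = y$, one cannot divide directly; however both sides of the claimed identity depend continuously (in fact analytically, via the entire function $\nu \mapsto J_\nu(2\sqrt t)$) on $x$ with $y$ fixed, so the equality for $x \neq y$ extends to $x = y$ by continuity, yielding the ``in particular'' formula $\kernelJ(x,x) = \sum_{s \geq 1}\besselsquare{x+s}$. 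The only delicate point is the telescoping identity itself, which is a purely algebraic manipulation once the Bessel recurrence is in hand; the rest is routine.
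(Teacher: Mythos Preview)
Your proof is correct and is essentially the standard Christoffel--Darboux computation: the recurrence $\frac{2\nu}{z}J_\nu = J_{\nu-1}+J_{\nu+1}$ turns $(x-y)\besselJ_{x+s}\besselJ_{y+s}$ into the telescoping increment $\sqrt t\,(c_{s-1}-c_s)$, and the decay bound~\eqref{equation:upper_bound_Jn} kills the boundary term at infinity. The continuity argument for the diagonal is also fine.

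Note, however, that the paper does not supply its own proof of this lemma; it simply quotes the result from~\cite[Proposition~2.9]{boo} and~\cite[(3.31)]{johansson}. The computation you wrote is precisely the one carried out in those references. Interestingly, the paper does perform the ``reverse'' telescoping computation in the proof of Lemma~\ref{lemma:dJ}: there one starts from the series representation $\kJ(m,n)=2\sum_{s\geq 1}J_{m+s}J_{n+s}$, differentiates termwise using $2J_n'=J_{n-1}-J_{n+1}$, and telescopes to recover $J_mJ_{n+1}+J_{m+1}J_n$. Your argument and that one are two faces of the same identity (the recurrence and the derivative relation for Bessel functions are related by the generating series~\eqref{equation:generating_series_bessel}), so your proof meshes cleanly with the rest of the paper.
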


\section{Cores}
\label{section:cores}

Let  $e \in \Z_{\geq 2}$. We will identify $\Ze$ and $\{0,\dots,e-1\}$ in the natural way. Let $\lambda \in \partsn$ be a partition of $n$.

\begin{definition}
For any $i \in \Ze$, we define $\ci(\lambda) \in \Z_{\geq 0}$ to be the number of nodes $(a,b) \in \mathcal{Y}(\lambda)$ of the Young diagram of $\lambda$ such that $b-a \equiv i \pmod e$.
\end{definition}

Such a node $(a,b) \in \mathcal{Y}(\lambda)$ with $b-a \equiv i \pmod e$ is an \emph{$i$-node} of $\mathcal{Y}(\lambda)$. Note that
\begin{equation}
\label{equation:sum_ci}
\sum_{i \in \Ze} \ci(\lambda) = |\lambda|.
\end{equation}
The next result is immediate from the definition of $\omega_\lambda$.

\begin{lemma}
\label{lemma:ci_omegalambda}
For any $i \in \Ze$ we have
\[
\ci(\lambda) = \frac{1}{2}
\sum_{\substack{m \in \Z \\ m = i \bmod e}} \omega_\lambda(m) - |m|
=
\frac{1}{2}\sum_{k \in \Z} \omega_\lambda(i+ke) - \lvert i +ke\rvert.
\]
\end{lemma}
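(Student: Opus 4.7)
The plan is to identify nodes of the Young diagram directly on the profile $\omega_\lambda$, which encodes all the combinatorics we need. First I would recall that in the Russian convention each node $(a,b) \in \mathcal{Y}(\lambda)$ is represented by a diamond of semi-diagonal $1$ centred at $(b-a,a+b-1)$. In particular, the integer $b-a$, the \emph{content} of the node, is precisely the $x$-coordinate of its centre, and the vertical line $\{x=b-a\}$ crosses the diamond along its vertical diagonal. So, defining $\#_m \lambda$ to be the number of nodes in $\mathcal{Y}(\lambda)$ of content exactly $m \in \Z$, the lemma will follow once we establish
\[
\#_m \lambda = \frac{\omega_\lambda(m) - |m|}{2},
\]
since $c_i(\lambda) = \sum_{m \equiv i \bmod e} \#_m \lambda$ by definition.

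Next I would prove this identity by induction on $|\lambda|$. For the empty partition, $\omega_\emptyset(x) = |x|$, and both sides vanish. For the inductive step, suppose $\lambda$ is obtained from $\mu$ by adding an addable node of content $m$. By~\eqref{equation:omega'}, the slope of $\omega_\mu$ on $(m-1,m)$ must be $-1$ and on $(m,m+1)$ must be $+1$ (this is exactly the local ``V'' shape at an addable position). Adding the box flips both slopes, so that on $(m-1,m)$ the new slope becomes $+1$ and on $(m,m+1)$ it becomes $-1$; elsewhere $\omega_\lambda$ coincides with $\omega_\mu$. At the integer $m-1$ the values agree, at $m$ the value increases by exactly $2$, and at $m+1$ the increments compensate so the value is unchanged. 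In other words, $\omega_\lambda(m') - \omega_\mu(m')$ vanishes for every integer $m' \neq m$ and equals $2$ for $m' = m$; meanwhile $\#_{m'}\lambda - \#_{m'}\mu = \delta_{m,m'}$. The induction step follows.

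Finally, summing the identity over $m \equiv i \pmod e$ gives
\[
c_i(\lambda) = \sum_{\substack{m \in \Z \\ m \equiv i \bmod e}} \#_m \lambda = \frac{1}{2} \sum_{\substack{m \in \Z \\ m \equiv i \bmod e}} \bigl(\omega_\lambda(m) - |m|\bigr) = \frac{1}{2} \sum_{k \in \Z} \bigl(\omega_\lambda(i+ke) - |i+ke|\bigr),
\]
which is the asserted equality; note that the sums are finite since $\omega_\lambda(x) = |x|$ for $|x|$ large enough. There is no real obstacle here: the only substantive point is the local slope analysis used in the inductive step, which is precisely the geometric meaning of \eqref{equation:omega'} and of the addition of a box in the Russian convention.
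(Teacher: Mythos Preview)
Your proof is correct and follows the same route as the paper: reduce to the identity $\#_m\lambda = \tfrac{1}{2}\bigl(\omega_\lambda(m)-|m|\bigr)$ and then sum over $m\equiv i\pmod e$. The only difference is that the paper justifies this identity in one line by observing that each box in the Russian convention has vertical diagonal of length~$2$, so the height $\omega_\lambda(m)-|m|$ above the floor at abscissa $m$ is exactly twice the number of content-$m$ boxes; your induction on $|\lambda|$ is a valid but more laborious substitute for this direct geometric count.
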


\begin{proof}
First note that the sum is finite. Recall that, in the Russian convention, each box has diagonal $2$. Then for any $m \in \Z$, the number $\frac{1}{2}\left[\omega_\lambda(m) - |m|\right]$ is (an integer and is) the number of nodes $(a,b) \in \mathcal{Y}(\lambda)$ such that $b-a = m$.
\end{proof}

If $\lambda,\mu \in \parts$ are such that $\ci(\lambda) = \ci(\mu)$ for all $i \in \Ze$ then we do not necessarily have $\lambda = \mu$ (unless if we are in the, excluded, case $e = 0$). However, as we mentioned in the introduction we know that $\lambda$ and $\mu$ share the same \emph{$e$-core} (see Remark~\ref{remark:same_xi} for a slight generalisation). As a quick reminder, the $e$-core of  $\lambda$, denoted by $\overline{\lambda}$, is constructed from $\mathcal{Y}(\lambda)$ by successively removing as many $e$-rim hooks as possible, where $e$-rim hooks are certain subsets of $e$ consecutive nodes of the rim of the Young diagram. The \emph{$e$-weight} $\mathrm{w}_e(\lambda)$ of $\lambda$ is the number of $e$-rim hooks that we  remove to reach the $e$-core $\overline{\lambda}$, in particular, we have the relation:
 \begin{equation}
 \label{equation:eweight}
 |\lambda| = |\overline{\lambda}| + e \mathrm{w}_e(\lambda).
 \end{equation}
More precisely,  since any $e$-rim hook has exactly one $i$-node for any $i \in \Ze$  (this follows from the hook removal operation; see also, for instance,~\cite[Remark 2.3]{rostam}) we deduce that
\begin{equation}
\label{equation:cilambda_cicore}
\ci(\lambda) = \ci(\overline{\lambda}) + \mathrm{w}_e(\lambda),
\end{equation}
 recovering~\eqref{equation:eweight} by~\eqref{equation:sum_ci}.

\begin{definition}
\label{definition:xi}
For any $i \in \Ze$ we define $\myxi(\lambda) \coloneqq \ci(\lambda) - \ci[i+1](\lambda) \in \Z$.
\end{definition}

Note that
\begin{equation}
\label{equation:sum_xi_0}
\sum_{i \in \Ze} x_i(\lambda) = 0.
\end{equation}

\begin{remark}
\label{remark:eabacus}
The quantities $x_0(\lambda),\dots,x_{e-1}(\lambda)$ can be read on the \emph{$e$-abacus} of $\overline\lambda$, see, for instance, \cite[Lemma 2.11]{rostam}. 
\end{remark}

The analogue of~\eqref{equation:sum_ci} is the following (non-trivial) equality, where $\lVert \cdot \rVert$ denotes the Euclidean norm:
\begin{equation}
\label{equation:c0lambdabar}
\ci[0](\overline{\lambda}) = \frac{1}{2}\lVert x(\lambda)\rVert^2.
\end{equation}
Indeed, by~\cite[Proposition 2.1]{fayers}  (see also~\cite[Proposition 2.14]{rostam} or also~\cite[Bijection 2]{gks}) we have $\ci[0](\overline{\lambda}) = \frac{1}{2}\lVert x(\overline{\lambda})\rVert^2$, but $x(\overline{\lambda}) = x(\lambda)$ by~\eqref{equation:cilambda_cicore}. Hence, for any $i \in \{1,\dots,e-1\}$ we have
\begin{equation}
\label{equation:cilambdabar}
\ci(\overline{\lambda}) = \frac{1}{2}\lVert x(\lambda)\rVert^2 - \myxi[0](\lambda) - \dots - \myxi[i-1](\lambda).
\end{equation}

\begin{remark}
\label{remark:same_xi}
It follows that if $\lambda,\mu \in \parts$ satisfy $\myxi(\lambda) = \myxi(\mu)$ for all $i \in \Ze$ then $\ci(\overline\lambda) = \ci(\overline\mu)$ for all $i \in \Ze$ thus $\overline{\lambda} = \overline{\mu}$. 
\end{remark}

Note the following consequence of~\eqref{equation:sum_ci}, \eqref{equation:sum_xi_0}, \eqref{equation:c0lambdabar} and \eqref{equation:cilambdabar}:
\begin{equation}
\label{equation:size_core}
\lvert \overline{\lambda}\rvert = \frac{e}{2}\lVert x(\lambda)\rVert^2 + \sum_{i =0}^{e-1}i x_i(\lambda).
\end{equation}

One of the aim of this paper is to study the asymptotic behaviour of $\lvert \overline{\lambda}\rvert$ under the Plancherel measure. By~\eqref{equation:size_core}, for any $i \in \Ze$ we have
\[
x_i(\lambda) = O\left(\sqrt {|\lambda|}\right).
\]
 We will see how to refine this asymptotics when $\lambda$ is chosen under the (Poissonised)  Plancherel measure, by using Theorem~\ref{theorem:BOO} (see Theorem~\ref{theorem:size_core_plancherel}). 

\begin{lemma}
\label{lemma:x_i_card_Fr}
For any $i \in \{0,\dots,e-1\}$ we have
\begin{align*}
x_i(\lambda)
&=
\# (e\Z_{\geq 0} + i) \cap \D(\lambda)   - \# (e\Z_{< 0} + i) \cap \D(\lambda)^c
\\
&=
\# (e\Z_{\geq 0} + i) \cap \D(\lambda)   -  \# (e\Z_{\geq 1} -i-1) \cap \D(\lambda')
\\
&=
\# (e\Z_{\geq 0} + i) \cap \D(\lambda)   -  \# (e\Z_{\geq 0} + \iet) \cap \D(\lambda'),
\end{align*}
with $\iet \coloneqq e-1-i \in \{0,\dots,e-1\}$.
\end{lemma}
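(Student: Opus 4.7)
The plan is to express $x_i(\lambda) = c_i(\lambda) - c_{i+1}(\lambda)$ directly via Lemma~\ref{lemma:ci_omegalambda} as a telescoping sum over the rim function $\omega_\lambda$, then translate the resulting increments $\omega_\lambda(m) - \omega_\lambda(m+1)$ into indicator functions of membership in $\mathcal{D}(\lambda)$ using \eqref{equation:omega'} and \eqref{equation:D(lambda)_omegalambda}.

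First I would write, from Lemma~\ref{lemma:ci_omegalambda},
\[
2 x_i(\lambda) = \sum_{k \in \Z}\bigl[\omega_\lambda(i+ke) - \omega_\lambda(i+1+ke)\bigr] - \sum_{k \in \Z}\bigl[|i+ke| - |i+1+ke|\bigr],
\]
where both sums are finite. By \eqref{equation:omega'} and \eqref{equation:D(lambda)_omegalambda}, for every $m \in \Z$ one has $\omega_\lambda(m) - \omega_\lambda(m+1) = 2\cdot \ind_{m \in \D(\lambda)} - 1$; likewise $|m| - |m+1| = 2\cdot \ind_{m < 0} - 1$. Plugging these in, the constants $-1$ cancel and, upon the change of variable $m = i+ke$, we obtain
\[
x_i(\lambda) = \sum_{m \in e\Z + i}\bigl[\ind_{m \in \D(\lambda)} - \ind_{m < 0}\bigr].
\]
Since $i \in \{0,\dots,e-1\}$, the condition $m \in e\Z + i$ with $m \geq 0$ is exactly $m \in e\Z_{\geq 0} + i$, and with $m < 0$ is exactly $m \in e\Z_{< 0} + i$. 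Splitting the sum accordingly yields
\[
x_i(\lambda) = \#\bigl(e\Z_{\geq 0} + i\bigr) \cap \D(\lambda) - \#\bigl(e\Z_{< 0} + i\bigr) \cap \D(\lambda)^c,
\]
which is the first equality.

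For the second line, I would use the relation \eqref{equation:D(lambda')}, namely $\D(\lambda') = -\D(\lambda)^c - 1$: the involution $m \mapsto -m-1$ transforms the second term, sending $m = ek + i$ with $k \leq -1$ to $-m-1 = e(-k) - i - 1$ with $-k \geq 1$, that is, bijecting $e\Z_{<0} + i$ with $e\Z_{\geq 1} - i - 1$, while simultaneously mapping $\D(\lambda)^c$ to $\D(\lambda')$. This gives the second equality, and rewriting $e\Z_{\geq 1} - i - 1 = e\Z_{\geq 0} + (e-1-i) = e\Z_{\geq 0} + \iet$ yields the third.

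There is no real obstacle beyond carefully tracking signs and the shift in the involution $m \mapsto -m-1$; everything reduces to combining Lemma~\ref{lemma:ci_omegalambda} with \eqref{equation:omega'}, \eqref{equation:D(lambda)_omegalambda} and \eqref{equation:D(lambda')}.
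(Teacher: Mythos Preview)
Your proof is correct and follows essentially the same route as the paper's: both start from Lemma~\ref{lemma:ci_omegalambda}, convert the increments $\omega_\lambda(m)-\omega_\lambda(m+1)$ into $\pm 1$ via~\eqref{equation:omega'} and~\eqref{equation:D(lambda)_omegalambda}, split according to the sign of $m=ke+i$, and then invoke~\eqref{equation:D(lambda')} for the last two equalities. One small presentational slip: when you write $2x_i(\lambda)$ as a difference of two sums over $k\in\Z$ and say ``both sums are finite'', that is not literally true (each separate sum diverges); only the combined sum $\sum_{k}\bigl[(\omega_\lambda-|\cdot|)(i+ke)-(\omega_\lambda-|\cdot|)(i+1+ke)\bigr]$ has finitely many nonzero terms, which is how the paper phrases it and how your subsequent termwise cancellation of the $-1$'s should be read.
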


\begin{proof}
Write $\delta_k \coloneqq \omega_\lambda(ke+i) - \omega_\lambda(ke+i+1)$. Recalling~\eqref{equation:omega'} and~\eqref{equation:D(lambda)_omegalambda}, we have:
\begin{equation}
\label{equation:delta_lambda_D}
\delta_k = \begin{cases}
1, &\text{if }ke+i \in \D(\lambda),
\\
-1,&\text{otherwise}.
\end{cases}
\end{equation}
 We have, using Lemma~\ref{lemma:ci_omegalambda} and the fact that $i \in \{0,\dots,e-1\}$,
\begin{align*}
2x_i(\lambda)
&=
2\bigl(\ci[i](\lambda) - \ci[i+1](\lambda)\bigr)
\\
&=
\sum_{k \in \Z} \Bigl(\omega_\lambda(ke+i) - \lvert ke+i\rvert\Bigr) - \Bigl(\omega_\lambda(ke+i+1) - \lvert ke+i+1\rvert\Bigr)
\\
&=
\sum_{k < 0} \delta_k - \lvert ke+i\rvert + \lvert ke+i+1\rvert
+
\sum_{k \geq 0} \delta_k - \lvert ke+i\rvert + \lvert ke+i+1\rvert
\\
&=
\sum_{k < 0} \delta_k + (ke+i) - (ke+i+1)
+
\sum_{k \geq 0} \delta_k - (ke+i) + (ke+i+1)
\\
&=
\sum_{k < 0} \bigl(\delta_k  -1\bigr)
+
\sum_{k \geq 0} \bigl(\delta_k +1\bigr).
\end{align*}
By~\eqref{equation:delta_lambda_D}, we have
\[
\delta_k + 1 = \begin{cases}
2, &\text{if } ke+i \in \D(\lambda),
\\
0,&\text{otherwise},
\end{cases}
\]
thus $\frac{1}{2}\sum_{k \geq 0}(\delta_k + 1) = \#(e\Z_{\geq 0} + i)\cap \D(\lambda)$. Similarly, we have
\[
\delta_k - 1 = \begin{cases}
-2, &\text{if } ke+i \in \D(\lambda)^c,
\\
0,&\text{otherwise},
\end{cases}
\]
thus $\frac{1}{2}\sum_{k < 0} (\delta_k - 1) = - \#(e\Z_{<0} + i)\cap \D(\lambda)^c$. We deduce the first equality of the lemma.  By~\eqref{equation:D(lambda')} we have $\#(e\Z_{<0} + i) \cap \D(\lambda)^c = \#(e\Z_{\geq 1} -i - 1)\cap \D(\lambda')$  thus we deduce the last two announced equalities.
\end{proof}

\section{Covariance calculations}
\label{section:covariance}

 Our aim is to evaluate $x_i(\lambda)$ as $\lvert \lambda \rvert \to \infty$ under the Poissonised Plancherel measure. 

\subsection{Central limit theorem}
\label{subsection:clt}

With a view to Lemma~\ref{lemma:x_i_card_Fr}, we will use the following central limit theorem.

\begin{theorem}[\protect{Costin--Lebowitz~\cite{costin-lebowitz}, Soshnikov~\cite{soshnikov:gaussian}}]
\label{theorem:clt}
Let $(\mathcal{D}_t)_t$ be a sequence of determinantal point processes on a locally compact Polish space. Assume that each $\mathcal{D}_t$ is associated with a Hermitian non-negative locally trace class operator in $L^2$. Let $\bigl\{I^{(0)}_t,\dots,I^{(e-1)}_t\bigr\}_t$ be a sequence of measurable sets, disjoints for any fixed $t$. We denote by $\Et$, $\Vart$ and $\Covt$ the expectation,  variance and covariance with respect to the probability distribution of the random process $\mathcal{D}_t$. Let $\#_t^{(i)}$ be the random variable given by the number of points of $\mathcal{D}_t$ inside $I_t^{(i)}$. If $\Vart \#_t^{(i)} \to +\infty$ as $t \to +\infty$ for all $i$ and if for all $i,j$
\[
\frac{\Covt\left(\#_t^{(i)}, \#_t^{(j)}\right)}{\sqrt{\Vart \#_t^{(i)} \Vart \#_t^{(j)}}} \xrightarrow{t \to +\infty} b_{ij},
\]
for some $b_{ij} \in \R$ then the vector
\[
\left(\frac{\#_t^{(i)} - \Et \#_t^{(i)}}{\sqrt{\Vart \#_t^{(i)}}}\right)_{0\leq i < e}
\]
converges  in distribution to the $k$-dimensional centred normal vector with covariance matrix $(b_{ij})_{0 \leq i,j < e}$.
\end{theorem}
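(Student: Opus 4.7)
The plan is to combine the spectral representation of counting statistics of determinantal processes with the method of cumulants, following Costin--Lebowitz in the single-variable case and Soshnikov for the multivariate extension. For each $t$ and each $i \in \{0,\dots,e-1\}$, the restriction of the integral operator defining $\mathcal{D}_t$ to $I_t^{(i)}$ is Hermitian, non-negative and trace class, hence diagonalises with eigenvalues $\bigl(\lambda_k^{(t,i)}\bigr)_k \subset [0,1]$ that are summable. By the Macchi--Soshnikov representation of determinantal processes, $\#_t^{(i)}$ then has the same distribution as $\sum_k \xi_k^{(t,i)}$, where the $\xi_k^{(t,i)}$ are mutually independent Bernoulli variables of parameters $\lambda_k^{(t,i)}$; in particular $\Vart \#_t^{(i)} = \sum_k \lambda_k^{(t,i)}\bigl(1-\lambda_k^{(t,i)}\bigr)$.

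From this description the univariate CLT is essentially immediate: each centred Bernoulli is bounded by $1$ in absolute value, so the Lindeberg condition is vacuously satisfied as soon as $\Vart \#_t^{(i)} \to +\infty$, and the Lindeberg--Feller theorem yields
\[
\frac{\#_t^{(i)} - \Et \#_t^{(i)}}{\sqrt{\Vart \#_t^{(i)}}} \xrightarrow[t\to+\infty]{d} \mathcal{N}(0,1).
\]

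To pass to joint convergence I would invoke the Cramér--Wold device: fix $(a_0,\dots,a_{e-1}) \in \R^e$ and study the cumulants of
\[
S_t \coloneqq \sum_{i=0}^{e-1} a_i \frac{\#_t^{(i)} - \Et \#_t^{(i)}}{\sqrt{\Vart \#_t^{(i)}}}.
\]
Its second cumulant converges by hypothesis to $\sum_{i,j} a_i a_j b_{ij}$, so it suffices to control the cumulants of order $k \geq 3$. For a single set one exploits the identity $c_k(\#(A)) = \sum_j P_k(\lambda_j)$ with $P_k$ a universal polynomial vanishing at $0$ and $1$, whence $|P_k(\lambda)| \leq C_k \lambda(1-\lambda)$ on $[0,1]$ and $|c_k(\#(A))| \leq C_k \Var \#(A)$. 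The main obstacle, and the technical heart of the argument, is the analogue for the joint cumulants $c\bigl(\#_t^{(i_1)},\dots,\#_t^{(i_k)}\bigr)$ on disjoint sets: these can be written as alternating traces of products of restrictions of the kernel to the various $I_t^{(i)}$, and Hölder-type Schatten-norm estimates are needed to bound them by a quantity of size $O\bigl(\min_j \Vart \#_t^{(i_j)}\bigr)$. Granted such a bound, each cumulant $c_k(S_t)$ with $k \geq 3$ is a bounded linear combination of these joint cumulants divided by $\prod_j \sqrt{\Vart \#_t^{(i_j)}}$ and therefore tends to $0$. The method of cumulants then identifies the limit of $S_t$ with the centred Gaussian of variance $\sum_{i,j} a_i a_j b_{ij}$, and Cramér--Wold delivers the announced convergence of the full vector.
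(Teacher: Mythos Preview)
The paper does not give its own proof of this statement: Theorem~\ref{theorem:clt} is quoted as an external result of Costin--Lebowitz and Soshnikov, with only two remarks appended (that Soshnikov's proof, stated for particular kernels, goes through in the generality claimed, and that the discrete Bessel kernel satisfies the hypotheses). There is therefore no ``paper's own proof'' to compare against.

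That said, your sketch is a faithful outline of how Costin--Lebowitz and Soshnikov actually argue. One point deserves a caveat: the Bernoulli representation you invoke is a statement about a \emph{single} counting statistic. Restricting the kernel to $I_t^{(i)}$ separately for each $i$ gives the marginal law of each $\#_t^{(i)}$, but not their joint law; the eigenvectors of the restriction to the union $\bigcup_i I_t^{(i)}$ need not be supported in a single $I_t^{(i)}$, so the various $\#_t^{(i)}$ are not disjoint subsums of a common family of independent Bernoullis. You implicitly acknowledge this by abandoning the Bernoulli picture when passing to the multivariate step and working instead with the trace/cumulant formalism, which is exactly what Soshnikov does. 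The remaining ``granted such a bound'' is indeed the technical heart of Soshnikov's paper: the joint cumulant of order $k\geq 3$ is written as an alternating sum of traces of products of the restricted kernels, and one shows it is $O\bigl(\max_i \Vart \#_t^{(i)}\bigr)$, which after normalisation by $\prod_j \sqrt{\Vart \#_t^{(i_j)}}$ goes to zero. Your sketch is therefore correct as a roadmap, with the Schatten-norm/combinatorial estimate on mixed cumulants left as the one genuinely nontrivial input.
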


\begin{remark}
Theorem~\ref{theorem:clt} in~\cite{soshnikov:gaussian} has only been explicitly stated  for some particular determinantal  processes. However, the proof is valid in the general case (see~\cite[Remark 2]{soshnikov:clt_stat} and~\cite[\textsection 4]{soshnikov:survey}). See also~\cite{gaf}.
\end{remark}

\begin{remark}
The discrete Bessel kernel $\kernelJ$ satisfies the assumptions of Theorem~\ref{theorem:clt}, see~\cite[Corollary 2.10]{boo}. In particular, we will be able to use Theorem~\ref{theorem:clt} in our setting.
\end{remark}

Let $i \in \{0,\dots,e-1\}$. Recall from Lemma~\ref{lemma:x_i_card_Fr} that $x_i(\lambda) = \# (e\Z_{\geq 0} + i) \cap \D(\lambda) - \# (e\Z_{< 0} + i)\cap \D(\lambda)^c$. We define
\[
\xrestr_i(\lambda) \coloneqq  \# (e\Z_{\geq 0} + i) \cap \D(\lambda) - \# (e\llbracket -t^2,-1\rrbracket + i)\cap \D(\lambda)^c.
\]
We have
\begin{equation}
\label{equation:xrestr-xi}
\xrestr_i(\lambda) - x_i(\lambda) = \#(e\Z_{< -t^2}+i)\cap \D(\lambda)^c,
\end{equation}
and
\begin{align}
\xrestr_i(\lambda)
&=
\# (e\Z_{\geq 0} + i) \cap \D(\lambda) - \Bigl[ \#\bigl( e\llbracket -t^2,-1\rrbracket+i\bigr) - \#\bigl(e\llbracket -t^2,-1\rrbracket+i\bigr) \cap \D(\lambda)\Bigr]
\notag
\\
\label{equation:xrestr}
&=
\# (e\Z_{\geq -t^2} +i)\cap \D(\lambda) - t^2.
\end{align}

\begin{lemma}
\label{lemma:etr_0}
For any $0 \leq i < e$ and $r \geq 1$ we have
\[
\E_t\bigl|\xrestr_i(\lambda) - x_i(\lambda)\bigr|^r = o\bigl(t^{-t}),
\]
as $t \to +\infty$. In particular, the variable $\xrestr_i(\lambda) - \myxi(\lambda)$ converges to $0$   in distribution under $\plt$ as $t \to +\infty$.
\end{lemma}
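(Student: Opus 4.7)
My plan is to use the identity~\eqref{equation:xrestr-xi}, which expresses $N_t \coloneqq \xrestr_i(\lambda) - \myxi(\lambda) = \#(e\Z_{<-t^2}+i) \cap \D(\lambda)^c$ as a count of points of the complementary descent set in a far-negative arithmetic progression, and to transfer this to a count of $\D(\lambda)$-points in a far-positive set, where Bessel tail estimates are very sharp.

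First, the distributional identity $\lambda \stackrel{d}{=} \lambda'$ under $\plt$ (see~\eqref{equation:pl(Lambda)}) together with the relation $\D(\lambda') = -\D(\lambda)^c - 1$ (see~\eqref{equation:D(lambda')}) gives that $N_t$ has the same $\plt$-distribution as
\[
N'_t \coloneqq \#\bigl(B_t \cap \D(\lambda)\bigr), \qquad B_t \coloneqq \bigl\{-ke-i-1 : k \in \Z,\ k < -t^2\bigr\} \subseteq \Z_{\geq et^2-e}.
\]
By Theorem~\ref{theorem:BOO}, for every $k \geq 1$ the $k$-th factorial moment of $N'_t$ is
\[
\E_t\bigl[N'_t(N'_t-1)\cdots(N'_t-k+1)\bigr] = \sum_{\substack{(x_1,\dots,x_k) \in B_t^k \\ \text{distinct}}} \det\bigl[\kernelJ(x_a,x_b)\bigr]_{a,b},
\]
and since $\kernelJ$ is positive semi-definite on finite subsets of $\Z$, Hadamard's inequality bounds each determinant by $\prod_a \kernelJ(x_a,x_a)$, yielding
\[
\E_t\bigl[N'_t(N'_t-1)\cdots(N'_t-k+1)\bigr] \leq M_t^k, \qquad M_t \coloneqq \sum_{m \geq et^2-e}\kernelJ(m,m).
\]
Expanding $(N'_t)^r$ in falling factorials via Stirling numbers of the second kind and using $M_t \leq 1$ for $t$ large, one deduces $\E_t N_t^r \leq C_r \cdot M_t$ for some constant $C_r$ depending only on $r$.

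It remains to show $M_t = o(t^{-t})$. By Lemma~\ref{lemma:kernelJ_sum}, $M_t$ is a linear combination of tail sums $\sum_{\nu \geq N}\besselJ_\nu^2$ with $N \sim et^2$. The triangle inequality applied termwise to the Bessel series yields, for $\nu \geq 1$,
\[
\bigl|\besselJ_\nu\bigr| \leq \tfrac{t^{\nu/2}}{\nu!}\exp(t/\nu),
\]
so Stirling's formula gives, for $\nu \geq et^2$ and $t$ large, $\besselJ_\nu^2 \leq C(te^2/\nu^2)^\nu \leq C\, t^{-3\nu}$. Summing a geometric series then produces $M_t = O(t^{-3et^2 + O(1)})$, which is $o(t^{-t})$ (in fact $o(t^{-K})$ for every $K > 0$). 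The convergence $N_t \to 0$ in distribution follows from Markov's inequality, since $N_t$ is $\Z_{\geq 0}$-valued: $\plt(N_t \geq 1) \leq \E_t N_t \to 0$. The main technical point is the first step --- using the duality $\lambda \leftrightarrow \lambda'$ so that Hadamard's inequality on $\kernelJ$ controls the higher moments of a complementary-set count --- after which the Bessel tail bound is a standard large-order estimate.
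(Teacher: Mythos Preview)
Your argument is correct and takes a genuinely different route from the paper. Both proofs share the opening duality step (using $\D(\lambda') = -\D(\lambda)^c - 1$ and $\lambda \stackrel{d}{=} \lambda'$ under $\plt$) to reduce to bounding $\E_t(N'_t)^r$ with $N'_t = \#\bigl(B_t \cap \D(\lambda)\bigr)$ for a set $B_t$ sitting to the right of $et^2 - O(1)$. From there the paths diverge. The paper notices the purely combinatorial inequality $N'_t \leq \max(|\lambda| - t^2, 0)$: if $\D(\lambda)$ contains $n \geq 1$ points of $B_t$ then $\lambda_1 - 1 = \max \D(\lambda) \geq t^2 + n - 1$, hence $|\lambda| \geq \lambda_1 \geq t^2 + n$. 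It then bounds $\E_t\max(|\lambda|-t^2,0)^r$ by Cauchy--Schwarz and a Chernoff tail bound for the Poisson variable $|\lambda|$, never invoking Theorem~\ref{theorem:BOO} or any Bessel identity. Your approach instead leans on the determinantal structure: factorial moments of $N'_t$ are sums of principal minors of $\kernelJ$, Hadamard's inequality for positive semidefinite matrices dominates these by products of diagonal entries, and large-order Bessel decay finishes the job. The paper's argument is thus more elementary and probabilistic, while yours is intrinsic to the point-process machinery and would transfer to other determinantal kernels with comparable diagonal tails. One minor imprecision in your sketch: $M_t$ is not a finite linear combination of tail sums but the iterated sum $\sum_{m \geq N_0}\sum_{\nu > m}\besselJ_\nu^2 = \sum_{\nu > N_0}(\nu - N_0)\besselJ_\nu^2$; the extra linear weight $(\nu - N_0)$ is harmless under your geometric bound $\besselJ_\nu^2 \lesssim t^{-3\nu}$, so the conclusion stands.
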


\begin{proof}
Recall from~\eqref{equation:D(lambda')} that $\D(\lambda)^c = -\D(\lambda')-1$, so that
\begin{align*}
\xrestr_i(\lambda) - x_i(\lambda)
&= \#(e\Z_{<-t^2}+i)\cap \D(\lambda)^c
\\
&=
\#(e\Z_{<-t^2}+i)\cap\bigl(-\D(\lambda')-1\bigr)
\\
&=
\#(e\Z_{>t^2}-i-1)\cap \D(\lambda')
\\
&=
\#(e\Z_{\geq t^2} + \iet) \cap \D(\lambda'),
\end{align*}
where $\iet = e-1-i \in \{0,\dots,e-1\}$.
Hence, by~\eqref{equation:pl(Lambda)} it suffices to prove the result for the random variable $\#(e\Z_{\geq t^2} +\iet)\cap \D(\lambda)$.

We first prove that 
\begin{equation}
\label{equation:eZi'}
\#(e\Z_{\geq t^2}+\iet) \cap \D(\lambda) \leq \max(T-t^2,0),
\end{equation}
where $T  \coloneqq \lvert \lambda\rvert$ is (by definition) a Poisson random variable with parameter $t$. If $\#\bigl(e\Z_{\geq t^2}+\iet\bigr) \cap \D(\lambda) = n$ with $n \geq 1$ then $\D(\lambda)$ contains an element of the form $ek+\iet$ for $k \geq t^2+n-1$. Hence, we have, recalling that $\iet \geq 0$, $n \geq 1$ and $e \geq 2$,
\[
\lambda_1 -1 = \max \D(\lambda) \geq e(t^2+n-1)+\iet \geq t^2+n-1.
\]
Thus, we obtain that
\[T \geq \lambda_1 \geq t^2+n,
 \]
and thus $T - t^2 \geq n$ thus $n \leq \max\bigl(T-t^2,0\bigr)$ as desired.
This inequality is satisfied when $n = 0$ as well, proving~\eqref{equation:eZi'}.

We deduce from~\eqref{equation:eZi'}  that, using Cauchy--Schwarz inequality,
\begin{align}
\Et\bigl|\#(e\Z_{\geq t^2}+\iet) \cap \D(\lambda)\bigr|^r
&\leq \Et \left[\max(T-t^2,0)^r\right]
\notag
\\
&\leq \Et\bigl[ \ind_{T \geq t^2} (T-t^2)^r\bigr]
\notag
\\
&\leq \sqrt{\Et\ind_{T\geq t^2}} \sqrt{\Et(T-t^2)^{2r}}
\notag
\\
&=
\sqrt{\mathbb{P}_t(T \geq t^2)} \sqrt{\Et(T-t^2)^{2r}}.
\label{equation:bound_Et_card}
\end{align}
The second term in the right-hand side is the square root of a polynomial $P_r(t)$ in $t$ since $\E_t T^k$ is a polynomial in $t$ for any $k \geq 0$ (we have $\E_t T^k = \sum_{a = 0}^k t^a S_{k,a}$ where $S_{k,a}$ are the Stirling numbers of the second kind, see for instance~\cite{riordan}). For the first one, we  use the following Chernoff-type bound, valid for $x > t$ (see~\cite[Theorem 5.4]{mitzenmacher}):
\[
\mathbb{P}_t(T \geq x) \leq \frac{\exp(-t)\exp(x)t^x}{x^x} = \exp\bigl(x(1+\ln t)-x\ln x-t\bigr).
\]
For $t > 1$ and $x=t^2$ we obtain
\begin{align*}
\mathbb{P}_t\bigl(T\geq t^2\bigr)
&\leq
\exp\left(t^2(1+\ln t)-2t^2\ln t-t\right)
\\
&\leq
\exp\left( -t^2\ln t +t^2-t\right).
\end{align*}
Hence, by~\eqref{equation:bound_Et_card} we deduce that
\[
t^t \Et\bigl|\#(e\Z_{\geq t^2}+\iet) \cap \D(\lambda)\bigr|^r
\leq \sqrt{P_r(t)} \exp\left(\frac{1}{2}\left(-t^2\ln t +t^2  - t\right) + t\ln t\right) \xrightarrow{t \to +\infty} 0,
\]
whence the result.
\end{proof}

We now want to apply Theorem~\ref{theorem:clt} with $\xrestr_i(\lambda)$. We thus have to compute the asymptotics of the covariances. We will make the calculations for $\myxi(\lambda)$ and then see that it gives the desired result.

\subsection{Expectation}
\label{subsection:expectation}

\begin{lemma}
\label{lemma:sum_rhot}
We have $\sum_{m =0}^{+\infty} \rhot(m) \leq t$.
\end{lemma}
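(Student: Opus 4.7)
The plan is to reinterpret the sum as an expectation, identify this expectation with a classical statistic, and then bound that statistic trivially by $|\lambda|$.

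First, by Tonelli (all terms are non-negative), swap sum and expectation:
\[
\sum_{m = 0}^{+\infty} \rhot(m) = \sum_{m=0}^{+\infty} \Et \ind_{m \in \D(\lambda)} = \Et \#\bigl(\D(\lambda) \cap \Z_{\geq 0}\bigr).
\]

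Second, identify the combinatorial quantity $\#(\D(\lambda) \cap \Z_{\geq 0})$. Writing $\D(\lambda) = \{\lambda_a - a : a \geq 1\}$ with the convention $\lambda_a = 0$ for $a > h(\lambda)$, the map $a \mapsto \lambda_a - a$ is strictly decreasing in $a$ (because $\lambda_{a+1} \leq \lambda_a$ gives $\lambda_{a+1}-(a+1) < \lambda_a - a$), hence injective. Therefore
\[
\#\bigl(\D(\lambda) \cap \Z_{\geq 0}\bigr) = \#\{a \geq 1 : \lambda_a \geq a\} =: d(\lambda),
\]
the side length of the Durfee square of $\lambda$.

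Third, bound $d(\lambda) \leq |\lambda|$. This is immediate: the Durfee square is a subset of $\Y(\lambda)$ of cardinality $d(\lambda)^2$, so $d(\lambda)^2 \leq |\lambda|$, and in particular $d(\lambda) \leq |\lambda|$ (using $d(\lambda) \in \Z_{\geq 0}$). Finally, under $\plt$ the size $|\lambda|$ is Poisson distributed with parameter $t$, so $\Et |\lambda| = t$. Combining gives
\[
\sum_{m=0}^{+\infty} \rhot(m) = \Et d(\lambda) \leq \Et |\lambda| = t,
\]
as desired. There is essentially no obstacle here; the only point requiring a moment of thought is the combinatorial identification of $\#(\D(\lambda) \cap \Z_{\geq 0})$ with the Durfee square side, and even the cruder bound $d(\lambda) \leq |\lambda|$ (rather than the sharper $d(\lambda) \leq \sqrt{|\lambda|}$) suffices for the stated inequality.
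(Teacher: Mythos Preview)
Your proof is correct but follows a different route from the paper. The paper bounds each term individually: if $m \in \D(\lambda)$ then $m = \lambda_a - a$ for some $a \geq 1$, so $|\lambda| \geq \lambda_a = m+a > m$; hence $\rhot(m) \leq \plt(|\lambda| > m)$ and the tail-sum formula gives $\sum_{m \geq 0} \plt(|\lambda| > m) = \Et|\lambda| = t$. You instead swap sum and expectation at the outset, identify $\sum_{m \geq 0}\ind_{m \in \D(\lambda)} = \#(\D(\lambda)\cap\Z_{\geq 0})$ with the Durfee square side $d(\lambda)$, and bound $d(\lambda) \leq |\lambda|$. Your approach has the mild advantage that the first step is an equality, $\sum_{m \geq 0}\rhot(m) = \Et d(\lambda)$, and you correctly observe that the sharper $d(\lambda) \leq \sqrt{|\lambda|}$ would yield (via Jensen) the stronger bound $\sqrt{t}$. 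The paper's approach avoids naming the Durfee square and is a line shorter. Both are equally valid for the purpose at hand.
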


\begin{proof}
Let $m \geq 0$. If $m \in \D(\lambda)$ for $\lambda \in \parts$ then $m = \lambda_a - a$ for some $a \geq 1$ thus $|\lambda| \geq \lambda_a = m + a > m$. We deduce that
\begin{align*}
\sum_{m = 0}^{+\infty} \rhot(m)
&\leq
\sum_{m = 0}^{+\infty} \plt\bigl(m \in \D(\lambda)\bigr)
\\
&\leq
\sum_{m  =0}^{+\infty} \plt\bigl(|\lambda| > m\bigr) = \Et|\lambda|,
\end{align*}
since $|\lambda| \in \Z_{\geq 0}$. We thus obtain the desired inequality since $|\lambda|$ is a Poisson variable with parameter $t$ under $\plt$.
\end{proof}

Recall that $\besselJ_n = J_n(2\sqrt t)$.

\begin{proposition}
\label{proposition:Exi}
Under $\plt$ we have,
\[
\Et x_i(\lambda) = \sum_{k \geq 0} \rhot(ek+i) - \sum_{k \geq 1} \rhot(ek - i - 1) = \sum_{s \geq i+1}\besselsquare{s} - \sum_{s = -i}^i \sum_{k=1}^{\infty} \besselsquare{ek+s}.
\]
\end{proposition}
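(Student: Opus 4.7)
The plan is to start from Lemma~\ref{lemma:x_i_card_Fr} in the form
$$x_i(\lambda) = \#(e\Z_{\geq 0}+i)\cap \D(\lambda) - \#(e\Z_{\geq 1}-i-1)\cap \D(\lambda'),$$
take expectations under $\plt$, interchange sum and expectation to obtain the first equality, and then unfold the one-point correlation function in terms of Bessel functions for the second equality.

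For the first equality, I would write each cardinality as a countable sum of indicators and apply Tonelli's theorem; this is legitimate because the indicators are non-negative and $\sum_{m \geq 0}\rhot(m) \leq t$ by Lemma~\ref{lemma:sum_rhot}. By definition of $\rhot$ one gets $\Et\,\#(e\Z_{\geq 0}+i)\cap \D(\lambda) = \sum_{k \geq 0}\rhot(ek+i)$, and identity \eqref{equation:corr_lambda'} similarly gives $\Et\,\#(e\Z_{\geq 1}-i-1)\cap \D(\lambda') = \sum_{k \geq 1}\rhot(ek-i-1)$. Subtracting yields the first equality.

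For the second equality, I would combine Theorem~\ref{theorem:BOO} (which says $\rhot(m) = \kernelJ(m,m)$) with Lemma~\ref{lemma:kernelJ_sum} to substitute $\rhot(m) = \sum_{s \geq 1}\besselsquare{m+s}$. Interchanging the remaining sums once more (again by Tonelli, since $\besselsquare{\cdot} \geq 0$), I obtain
$$\Et x_i(\lambda) = \sum_{k \geq 0}\sum_{s \geq 1}\besselsquare{ek+i+s} - \sum_{k \geq 1}\sum_{s \geq 1}\besselsquare{ek-i-1+s}.$$
The last step is a reindexation: writing $u = i+s$ in the first sum and $u = s-i-1$ in the second, both inner sums become of the form $\sum_{u \geq \cdot}\besselsquare{ek+u}$, with ranges $\{u \geq i+1\}$ and $\{u \geq -i\}$ respectively. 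Peeling off the $k=0$ contribution of the first double sum gives exactly $\sum_{s \geq i+1}\besselsquare{s}$, and for every $k \geq 1$ the difference of ranges $\{u \geq -i\}\setminus\{u \geq i+1\} = \{-i,\dots,i\}$ produces precisely the second term of the claimed formula.

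No step is really hard; the only points needing care are the two applications of Tonelli, and both are justified by non-negativity together with the a priori bound of Lemma~\ref{lemma:sum_rhot}. I anticipate no substantive obstacle.
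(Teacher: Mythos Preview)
Your proposal is correct and follows essentially the same route as the paper's own proof: Lemma~\ref{lemma:x_i_card_Fr} plus~\eqref{equation:corr_lambda'} for the first equality, then Theorem~\ref{theorem:BOO} and Lemma~\ref{lemma:kernelJ_sum} followed by a reindexation (which the paper calls a ``telescopic sum'') for the second. Your justification of the interchanges via Tonelli and Lemma~\ref{lemma:sum_rhot} is slightly more explicit than the paper's, but the argument is the same.
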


\begin{proof}
By Lemma~\ref{lemma:x_i_card_Fr} and~\eqref{equation:corr_lambda'} we have, where $\mathbf{1}_E$ denotes the characteristic function of any set $E$,
\begin{align*}
\Et x_i(\lambda)
&=
\sum_{k \geq 0} \Et \mathbf{1}_{\D(\lambda)}(ek+i) - \sum_{k \geq 1} \Et \mathbf{1}_{\D(\lambda')}(ek-i-1)
\\
&=
\sum_{k \geq 0} \plt(ek+i \in \D(\lambda)) - \sum_{k \geq 1} \plt(ek-i-1 \in \D(\lambda'))
\\
&=
\sum_{k \geq 0} \rhot(ek+i) - \sum_{k \geq 1}\rhod^t(ek-i-1).
\end{align*}
Note that each sum above is finite by Lemma~\ref{lemma:sum_rhot}. By Theorem~\ref{theorem:BOO} we obtain
\[
\Et x_i(\lambda) = 
\sum_{k \geq 0} \kernelJ(ek+i,ek+i) - \sum_{k \geq 1} \kernelJ(ek - i -1,ek-i-1).
\]
Using Lemma~\ref{lemma:kernelJ_sum} we deduce that, noticing that we have a telescopic sum,
\begin{align*}
\Et x_i(\lambda)
&=
\sum_{k \geq 0} \sum_{s \geq 1} \besselsquare{ek+i+s} - \sum_{k \geq 1}\sum_{s \geq 1} \besselsquare{ek-i-1+s}
\\
&=
\sum_{s \geq 1} \besselsquare{i+s} + \sum_{k \geq 1} \sum_{s \geq 1}\left( \besselsquare{ek+i+s} - \besselsquare{ek-i-1+s}\right)
\\
&=
\sum_{s \geq 1} \besselsquare{i+s} - \sum_{k \geq 1} \sum_{s=-i}^i \besselsquare{ek+s}.
\end{align*}
\end{proof}

Let us now recall some facts about the Bessel functions that we will use throughout the paper. 

\begin{lemma}
For any $n \in \Z$ and $x \in \R$ we have:
\begin{subequations}
\begin{gather}
J_n(x) \in \R,
\label{equation:Jnx_R}
\\
J_{-n} = (-1)^n J_n,
\label{equation:J-n(z)}
\\
\bigl| J_n(x) \bigr| \leq 1,
\label{equation:Jn_leq_1}
\\
J_n(x)^2 = \frac{2}{\pi}\int_0^{\pi/2} J_{2n}(2x\cos \theta) d\theta,
\label{equation:Jn2}
\\
J_n(x)^2 = \frac{2}{\pi}\int_0^{\pi/2} J_0(2x\sin \theta) \cos (2n\theta) d \theta,
\label{equation:Jn2_with_J0}
\\
\sum_{m \in \Z} J_m(x) z^m = \exp\left[\frac{1}{2}x\left(z-z^{-1}\right)\right], \qquad z \in \mathbb{C}^*,
\label{equation:generating_series_bessel}
\\
2J_n' = J_{n-1} - J_{n+1}.
\label{equation:Jn_ED}
\end{gather}
Moreover, if $n \in \N$ then
\begin{equation}
\bigl| J_n(x) \bigr| \leq \frac{|x|^n}{2^n n!},
\label{equation:upper_bound_Jn}
\end{equation}
and finally
\begin{gather}
J_0^2 + 2\sum_{s = 1}^{\infty} J_s^2 = 1,
\label{equation:sum_Js_square}
\\
J_0 + 2\sum_{s = 1}^{+\infty} J_{2s} = 1.
\label{equation:sum_even_bessel}
\end{gather}
\end{subequations}
\end{lemma}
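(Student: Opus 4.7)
The lemma assembles classical identities for Bessel functions of integer order; all are either immediate or derivable in a few lines from two fundamental starting points: the power series
\[
J_n(x) = \sum_{k \geq 0} \frac{(-1)^k}{k!\,(n+k)!}\left(\frac{x}{2}\right)^{n+2k} \qquad (n \in \N),
\]
(extended to negative $n$ via~\eqref{equation:J-n(z)}, or equivalently with the convention $1/k! = 0$ for $k < 0$), and the generating function~\eqref{equation:generating_series_bessel}. The plan is to derive each identity from whichever of the two is cleaner; all are also contained in~\cite[\textsection 10]{nist}.

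Realness~\eqref{equation:Jnx_R} and the index-reversal~\eqref{equation:J-n(z)} are immediate from the series. The generating function~\eqref{equation:generating_series_bessel} itself follows from multiplying the series for $\exp(xz/2)$ and $\exp(-x/(2z))$ and collecting the coefficient of $z^n$; differentiating it in $x$ and identifying coefficients gives the recurrence~\eqref{equation:Jn_ED}. Setting $z = e^{i\theta}$ yields the Jacobi--Anger expansion $e^{ix\sin\theta} = \sum_m J_m(x)e^{im\theta}$, whence Fourier inversion produces the integral representation $J_n(x) = \frac{1}{\pi}\int_0^\pi \cos(n\theta - x\sin\theta)d\theta$, from which~\eqref{equation:Jn_leq_1} is immediate. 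Evaluating Jacobi--Anger at $\theta = 0$ gives~\eqref{equation:sum_even_bessel}, while Parseval's identity for the same expansion gives $\sum_m J_m(x)^2 = 1$, which, combined with~\eqref{equation:J-n(z)}, collapses to~\eqref{equation:sum_Js_square}. The bound~\eqref{equation:upper_bound_Jn} is best handled via Poisson's integral representation $J_n(x) = \frac{(x/2)^n}{\sqrt{\pi}\,\Gamma(n+\frac12)}\int_{-1}^1(1-t^2)^{n-1/2}\cos(xt)dt$: bounding $|\cos(xt)| \leq 1$ and computing the remaining beta integral yields exactly $|J_n(x)| \leq \frac{|x|^n}{2^n n!}$.

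The two quadratic identities~\eqref{equation:Jn2} and~\eqref{equation:Jn2_with_J0} are the most substantial part. For~\eqref{equation:Jn2}, I would write $J_n(x)^2$ as a double integral using the representation above, then change variables to $u = (\theta_1+\theta_2)/2$, $v = (\theta_1-\theta_2)/2$ via the identity $\sin\theta_1 + \sin\theta_2 = 2\sin u \cos v$; the $u$-integral then produces $J_{2n}(2x\cos v)$, and after reducing the rotated region of integration to $(-\pi,\pi)^2$ via the joint $2\pi$-periodicity of the integrand in both $u$ and $v$, the evenness of $J_{2n}$ (using that $2n$ is even) collapses the $v$-integral to $[0,\pi/2]$. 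For~\eqref{equation:Jn2_with_J0}, substituting $\theta \mapsto \pi/2 - \theta$ in~\eqref{equation:Jn2} gives $J_n(x)^2 = \frac{2}{\pi}\int_0^{\pi/2} J_{2n}(2x\sin\theta)d\theta$; inserting the Fourier integral representation $J_{2n}(y) = \frac{2}{\pi}\int_0^{\pi/2}\cos(2n\phi)\cos(y\sin\phi)d\phi$ (obtained from the standard integral representation after killing the sine-sine term by symmetry about $\pi/2$) and exchanging the order of integration gives an inner integral in $\theta$ equal to $J_0(2x\sin\phi)$, yielding~\eqref{equation:Jn2_with_J0}. The only step requiring genuine care is the periodicity reduction in~\eqref{equation:Jn2}, where one must verify that the rotated integration region has exactly half the area of $(-\pi,\pi)^2$ and that the integrand is $2\pi$-periodic in both coordinates.
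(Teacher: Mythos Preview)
Your proposal is correct but takes a genuinely different route from the paper. The paper's proof consists entirely of pointers to the NIST handbook~\cite{nist}: each identity is dispatched by citing the relevant numbered formula (10.2.2, 10.4.1, 10.14.1, 10.9.26, 10.22.16, 10.10.1, 10.6.1, 10.14.4, 10.23.3, 10.12.4), with~\eqref{equation:J-n(z)} invoked as a supplement for the two quadratic formulas. There is no derivation at all. You instead give a self-contained sketch building everything from the power series and the generating function, including the Jacobi--Anger expansion and Poisson's integral for the bound~\eqref{equation:upper_bound_Jn}, and a genuine outline of the change-of-variables argument for~\eqref{equation:Jn2} followed by a Fubini step for~\eqref{equation:Jn2_with_J0}. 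Your sketches check out: the beta integral in the Poisson bound does collapse exactly to $|x|^n/(2^n n!)$; the periodicity-and-area argument for~\eqref{equation:Jn2} is correct once one observes that the rotated square is a fundamental domain for the image lattice and that $(-\pi,\pi)^2$ covers it twice; and the parity reductions in your derivation of~\eqref{equation:Jn2_with_J0} are sound because $2n$ is even. What the paper's approach buys is brevity appropriate to a lemma that is purely a toolbox of classical facts; what yours buys is independence from the handbook and a clear picture of why the quadratic identities hold, at the cost of considerably more text.
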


\begin{proof}
In~\cite{nist}:
for~\eqref{equation:Jnx_R} see (10.2.2),
for~\eqref{equation:J-n(z)} see (10.4.1),
for~\eqref{equation:Jn_leq_1} see (10.14.1),
for~\eqref{equation:Jn2} see (10.9.26) and~\eqref{equation:J-n(z)},
for~\eqref{equation:Jn2_with_J0} see (10.22.16) and~\eqref{equation:J-n(z)},
for~\eqref{equation:generating_series_bessel} see (10.10.1),
for~\eqref{equation:Jn_ED} see (10.6.1),
for~\eqref{equation:upper_bound_Jn} see (10.14.4),
for~\eqref{equation:sum_Js_square} see (10.23.3),
for~\eqref{equation:sum_even_bessel} see (10.12.4).
\end{proof}

We will now prove the following result.

\begin{proposition}
\label{proposition:Et_bounded}
The quantity $\Et x_i(\lambda)$ is bounded for $t \in \R$.
\end{proposition}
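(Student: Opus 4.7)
The plan is to start from the explicit expression of Proposition~\ref{proposition:Exi},
\[
\Et x_i(\lambda) = \sum_{s \geq i+1}\besselsquare{s} - \sum_{s = -i}^i \sum_{k=1}^{\infty} \besselsquare{ek+s},
\]
and bound each of the two nonnegative sums independently, using the $\ell^2$-normalisation identity~\eqref{equation:sum_Js_square}. Indeed, dividing~\eqref{equation:sum_Js_square} by~$2$ and dropping the nonnegative $J_0^2$ term gives the uniform bound $\sum_{s \geq 1} J_s(2\sqrt t)^2 \leq \tfrac{1}{2}$, valid for all $t > 0$.

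The first sum $\sum_{s \geq i+1}\besselsquare{s}$ is then just a tail of $\sum_{s \geq 1}J_s(2\sqrt t)^2$, and is therefore trivially bounded by $\tfrac{1}{2}$ uniformly in $t$. For the double sum, the key elementary observation is that, under the identification $i \in \{0,\dots,e-1\}$ and for $s \in \{-i,\dots,i\}$, one has $ek + s \geq e - i \geq 1$ for every $k \geq 1$. Hence each inner sum $\sum_{k \geq 1} \besselsquare{ek+s}$ consists solely of positive-index Bessel squares, and is itself a subsum of $\sum_{n \geq 1}J_n(2\sqrt t)^2 \leq \tfrac{1}{2}$. Summing over the $2i+1$ admissible values of $s$ yields a crude bound $\tfrac{2i+1}{2}$ for the double sum, uniformly in $t$.

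Combining the two estimates produces $\bigl|\Et x_i(\lambda)\bigr| \leq i+1$, which is independent of $t$ and gives the desired boundedness. There is no real technical obstacle: the only point that requires a moment of care is the positivity of the Bessel indices in the second sum, which is precisely where the restriction $i \leq e-1$ is invoked, sparing us from having to fold negative indices back to positive ones via~\eqref{equation:J-n(z)}. The entire argument is in essence an immediate application of the $\ell^2$-normalisation of the family $\bigl(J_n(2\sqrt t)\bigr)_{n \in \Z}$.
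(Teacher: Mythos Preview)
Your proof is correct and, in fact, considerably more economical than the paper's. Both arguments start from the same decomposition of Proposition~\ref{proposition:Exi} and handle the first sum $\sum_{s \geq i+1}\besselsquare{s}$ identically via~\eqref{equation:sum_Js_square}. The divergence is in the treatment of the double sum $\sum_{s=-i}^{i}\sum_{k\geq 1}\besselsquare{ek+s}$: the paper enlarges the range to $k \in \Z$, reduces via~\eqref{equation:Jn2} to bounding $\sum_{k\in\Z} J_{2ek+2s}(x)$, and then proves the generating-function identity of Lemma~\ref{lemma:sum_Jek+s} to conclude. You instead observe that, because $i \leq e-1$, every index $ek+s$ appearing in the double sum is already a positive integer, so each inner sum is a subsum of $\sum_{n\geq 1}J_n(2\sqrt t)^2 \leq \tfrac12$ and the whole thing is bounded by $\tfrac{2i+1}{2}$.

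Your route is the natural one if the sole goal is Proposition~\ref{proposition:Et_bounded}. The paper's detour through Lemma~\ref{lemma:sum_Jek+s} is not wasted, however: that lemma is invoked again later (in the covariance analysis of~\textsection\ref{subsection:asymptotics_covariance}) to collapse sums of the form $\sum_{m\in e\Z+i} J_{2m}(y)\exp(\ic m\theta)$, so the paper is effectively front-loading a tool it needs anyway. For the proposition at hand, though, your argument is both sufficient and cleaner.
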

 
By~\eqref{equation:Jn_leq_1} and~\eqref{equation:sum_Js_square}, we already know that the first sum in the expression of $\Et x_i(\lambda)$ in Proposition~\ref{proposition:Exi}:
\[
\sum_{s \geq 1} \besselsquare{i+s},
\]
is bounded for $t \in \R$. We will now prove that the second sum:
\[
\sum_{k \geq 1} \sum_{s=-i}^i \besselsquare{ek+s},
\]
is bounded, and this will conclude the proof of Proposition~\ref{proposition:Et_bounded}. To that extent, it suffices to prove that for any $s \in \Z$ the sum
\[
A_{e,s}(x) \coloneqq \sum_{k \in \Z} J_{ek+s}^2(x),
\]
is bounded for $x \in \R$. By~\eqref{equation:Jn2}, it suffices to prove that
\[
B_{e,s}(x) \coloneqq \sum_{k \in \Z} J_{2ek+2s}(x),
\]
is bounded for $x \in \R$. Note that~\eqref{equation:upper_bound_Jn} ensures that we can permute the sum and the integral signs. The next result generalises the standard equalities~\cite[10.12.3]{nist} expressing $\cos(z\cos\theta)$ (resp. $\sin(z\cos\theta)$) as a series involving $J_{2k}(z)$ and $\cos(2k\theta)$ (resp. $J_{2k+1}(z)$ and $\sin\bigl((2k+1)\theta\bigr)$). To avoid confusion with our $i \in \Z/e\Z$, we denote by $\ic$ the complex unit.

\begin{lemma}
\label{lemma:sum_Jek+s}
For any $k \in \Z$,   $x,t \in \R$, and $\vare \in \Z_{\geq 1}$ we have, with  $\Omega_{\vare} \coloneqq \bigl\{ \frac{2\ell \pi}{\vare} : -\bigl\lceil\frac{\vare}{2}\bigr\rceil < \ell \leq \bigl\lfloor\frac{\vare}{2}\bigr\rfloor\bigr\}$,
\[
\sum_{m \in \vare\Z + k} J_m(x) \exp(\ic mt)= \frac{1}{\vare}\sum_{\omega \in \Omega_\vare} \exp \ic\bigl[-k\omega + x\sin\left(\omega+t\right)\bigr].
\]
\end{lemma}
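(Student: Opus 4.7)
The plan is to combine two standard ingredients: the generating series~\eqref{equation:generating_series_bessel} for Bessel functions evaluated on the unit circle, and the roots of unity filter that extracts an arithmetic progression of indices from a Laurent series.

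First I would specialise~\eqref{equation:generating_series_bessel} to $z = \exp(\ic \theta)$ for $\theta \in \R$, which gives the Jacobi--Anger-type identity
\[
\sum_{m \in \Z} J_m(x) \exp(\ic m \theta) = \exp\bigl[\ic x \sin \theta\bigr].
\]
This sum is absolutely convergent thanks to~\eqref{equation:upper_bound_Jn}, which is crucial for the interchange of sums later. Next I observe that $\Omega_\vare$ consists of one representative in $(-\pi,\pi]$ of each argument of an $\vare$-th root of unity, so the orthogonality of characters of $\Z/\vare\Z$ gives
\[
\frac{1}{\vare}\sum_{\omega \in \Omega_\vare} \exp(\ic \omega n) = \ind_{n \equiv 0 \pmod \vare},
\]
for every $n \in \Z$.

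Applying this indicator with $n = m - k$ and inserting it into the full Bessel series, I can compute
\begin{align*}
\sum_{m \in \vare\Z + k} J_m(x) \exp(\ic m t)
&=
\sum_{m \in \Z} J_m(x) \exp(\ic m t) \cdot \frac{1}{\vare} \sum_{\omega \in \Omega_\vare} \exp\bigl(\ic \omega(m-k)\bigr)
\\
&=
\frac{1}{\vare} \sum_{\omega \in \Omega_\vare} \exp(-\ic k \omega) \sum_{m \in \Z} J_m(x) \exp\bigl(\ic m (t+\omega)\bigr)
\\
&=
\frac{1}{\vare} \sum_{\omega \in \Omega_\vare} \exp(-\ic k \omega) \exp\bigl[\ic x \sin(t+\omega)\bigr],
\end{align*}
where the interchange of the (finite) sum over $\Omega_\vare$ with the absolutely convergent sum over $m$ is harmless, and the last step uses the Jacobi--Anger identity evaluated at $\theta = t+\omega$. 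Regrouping the two exponentials yields the claimed formula.

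There is essentially no obstacle here: the proof is a one-line manipulation of the Bessel generating series combined with the standard roots-of-unity filter, and the only technical point to check is absolute convergence to justify the interchange of summations, which follows at once from~\eqref{equation:upper_bound_Jn}. The only mild care to take is in reading off $\Omega_\vare$ as a complete set of representatives of $\Z/\vare\Z$ (which holds for both even and odd $\vare$ thanks to the asymmetric floor/ceiling bounds in the definition), so that the character-orthogonality identity genuinely gives the indicator of the arithmetic progression $\vare\Z + k$.
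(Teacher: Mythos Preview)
Your proof is correct and follows essentially the same approach as the paper: both combine the Bessel generating series~\eqref{equation:generating_series_bessel} with the roots-of-unity filter to extract the arithmetic progression $\vare\Z+k$. The only cosmetic difference is that the paper works with $\mu_\vare \subseteq \mathbb{C}^*$ and a general point $u$ before specialising to the unit circle, whereas you specialise to $z=\exp(\ic\theta)$ first and phrase the filter directly in terms of the arguments $\omega\in\Omega_\vare$.
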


\begin{proof}
Let $\mu_{\vare} \subseteq \mathbb{C}^*$ be the subgroup of $\vare$-th roots of unity.
For any $u \in \mathbb{C}^*$ and $\zeta \in \mu_{\vare}$, by~\eqref{equation:generating_series_bessel} we have
\[
(u\zeta)^{-k} \exp \frac{1}{2}x\bigl(u\zeta-(u\zeta)^{-1}\bigr) = \sum_{m \in \Z} J_m(x) (u\zeta)^{m-k}.
\]
Using the identity
\begin{equation}
\label{equation:sum_root_unity}
\sum_{\zeta \in \mu_{\vare}} \zeta^{m-k} = \begin{cases}
\vare, &\text{if } \vare \mid m-k,
\\
0, &\text{otherwise},
\end{cases}
\end{equation}
we obtain
\[
\sum_{\zeta \in \mu_{\vare}} (u\zeta)^{-k} \exp \frac{1}{2}x\bigl(u\zeta-(u\zeta)^{-1}\bigr)
=
 \sum_{m \in \vare\Z+k} \vare J_m(x)u^{m-k}
\]
and thus
\[
\sum_{m \in \vare\Z+k} J_m(x) u^m = \frac{1}{\vare}\sum_{\zeta \in \mu_{\vare}} \zeta^{-k}\exp \frac{1}{2}x\bigl(u\zeta-(u\zeta)^{-1}\bigr).
\]
We deduce that
\begin{align*}
\sum_{m \in \vare\Z+k} J_m(x) \exp(\ic mt)
&=
\frac{1}{\vare}\sum_{\omega \in \Omega_\vare} \exp(-\ic k\omega)\exp \frac{1}{2}x\Bigl[\exp \ic(t+\omega)-\exp\bigl(-\ic(t+\omega)\bigr)\Bigr]
\\
&=
\frac{1}{\vare}\sum_{\omega\in\Omega_\vare} \exp(-\ic k\omega)\exp \bigr[ \ic x\sin(t+\omega)\bigr],
\end{align*}
which gives the announced formula.
\end{proof}

It follows immediately from Lemma~\ref{lemma:sum_Jek+s} applied with $t = 0$ that $B_{e,s}(x)$ is bounded for $x \in \R$ and thus, by the preceding discussion, this concludes the proof of Proposition~\ref{proposition:Et_bounded}.

\subsection{Covariance}
\label{subsection:covariance}

We give here an expression for $\Covt\bigl(\myxi(\lambda),\myxi[j](\lambda)\bigr)$ for $i \neq j$.

\begin{lemma}
\label{lemma:covariance_point}
For any $y \neq x$ we have
\[
\Covt\bigl(\mathbf{1}_{\D(\lambda)}(x),\mathbf{1}_{\D(\lambda)}(y)\bigr) = - \kernelJ(x,y)^2.
\]
\end{lemma}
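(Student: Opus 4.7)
The plan is to expand the covariance directly using the definition and then apply Theorem~\ref{theorem:BOO} to rewrite everything in terms of the kernel $\mathsf{J}^t$. Concretely, I would write
\[
\Covt\bigl(\ind_{\D(\lambda)}(x),\ind_{\D(\lambda)}(y)\bigr) = \Et\bigl[\ind_{\D(\lambda)}(x)\ind_{\D(\lambda)}(y)\bigr] - \Et\ind_{\D(\lambda)}(x)\cdot \Et\ind_{\D(\lambda)}(y).
\]
The first expectation equals $\plt\bigl(x,y \in \D(\lambda)\bigr) = \rhot(x,y)$ (here we use $x \neq y$ so that the event $\{x \in \D(\lambda)\} \cap \{y \in \D(\lambda)\}$ is genuinely the 2-point correlation, not a 1-point one). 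The two one-point expectations are $\rhot(x)$ and $\rhot(y)$ respectively.

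Next I would plug in Theorem~\ref{theorem:BOO}: the two-point correlation is the $2\times 2$ determinant
\[
\rhot(x,y) = \kernelJ(x,x)\kernelJ(y,y) - \kernelJ(x,y)\kernelJ(y,x),
\]
and the one-point correlations are $\rhot(x) = \kernelJ(x,x)$, $\rhot(y) = \kernelJ(y,y)$. Subtracting, the two copies of $\kernelJ(x,x)\kernelJ(y,y)$ cancel, leaving $-\kernelJ(x,y)\kernelJ(y,x)$. Finally, using the symmetry of the kernel (stated in the lemma just after Theorem~\ref{theorem:BOO}), we get $\kernelJ(x,y)\kernelJ(y,x) = \kernelJ(x,y)^2$, which yields exactly the claimed formula.

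There is no real obstacle here: this is essentially the defining property of a determinantal point process, spelled out in the 2-point case. The only subtlety worth flagging is why the assumption $x \neq y$ matters — namely that $\ind_{\D(\lambda)}(x)\ind_{\D(\lambda)}(y)$ is the indicator that the \emph{set} $\{x,y\}$ is contained in $\D(\lambda)$, which is what $\rhot(\{x,y\})$ counts under the convention $X = \{x_1,\dots,x_s\}$ with distinct $x_a$ adopted right after Definition~\ref{definition:correlation_function}.
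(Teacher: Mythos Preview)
Your proposal is correct and follows essentially the same approach as the paper: expand the covariance, identify the terms as the one- and two-point correlation functions, apply Theorem~\ref{theorem:BOO}, and simplify. The only cosmetic difference is that you make the use of the kernel's symmetry explicit, whereas the paper writes $\rhot(x,y) = \rhot(x)\rhot(y) - \kernelJ(x,y)^2$ directly.
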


\begin{proof}
Using Theorem~\ref{theorem:BOO}, a direct computation gives
\begin{align*}
\Covt\left(\ind_{\D(\lambda)}(x),\ind_{\D(\lambda)}(y)\right)
&=
\Et\ind_{\D(\lambda)}(x,y) - \Et \ind_{\D(\lambda)}(x)\Et\ind_{\D(\lambda)}(y)
\\
&=
\rhot(x,y) - \rhot(x)\rhot(y)
\\
&=
\left( \rhot(x)\rhot(y) - \kernelJ(x,y)^2\right) - \rhot(x)\rhot(y)
\\
&=
- \kernelJ(x,y)^2.
\end{align*}
\end{proof}

\begin{proposition}
\label{proposition:covariance_sum_kernel}
Let $i,j \in \{0,\dots,e-1\}$ with $i \neq j$. We have, under $\plt$,
\[
\Covt\bigl(x_i(\lambda),x_j(\lambda)\bigr) = - \sum_{m \in e\Z+i} \sum_{n \in e\Z+j} \kernelJ(m,n)^2.
\]
\end{proposition}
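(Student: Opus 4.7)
The strategy is to write $\myxi(\lambda)$ as a signed sum of indicator functions of $\D(\lambda)$ at points of $e\Z + i$, plug into the bilinearity of covariance, and finish with Lemma~\ref{lemma:covariance_point}. Concretely, Lemma~\ref{lemma:x_i_card_Fr} rewrites as
\[
\myxi(\lambda) = \sum_{m \in e\Z_{\geq 0} + i} \ind_{\D(\lambda)}(m) + \sum_{m \in e\Z_{<0} + i} \bigl(\ind_{\D(\lambda)}(m) - 1\bigr),
\]
where each summand is either $\ind_{\D(\lambda)}(m)$ or $\ind_{\D(\lambda)}(m)-1 = -\ind_{\D(\lambda)^c}(m)$. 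Only finitely many of these terms are non-zero: the right-hand sum because $\D(\lambda)\cap\Z_{\geq 0}\subseteq\{0,\dots,\lambda_1-1\}$, the left-hand sum because $\D(\lambda)\supseteq\Z_{<-h(\lambda)}$ forces $\ind_{\D(\lambda)}(m)-1=0$ for $m<-h(\lambda)$.

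To apply bilinearity cleanly, truncate: for $M\geq 0$ define $\myxi^{(M)}(\lambda)$ by the same formula with both sums restricted to $|m|\leq M$. Since adding a constant does not change a covariance, expanding $\Covt\bigl(\myxi^{(M)}(\lambda),\myxi[j]^{(M)}(\lambda)\bigr)$ over the finite index set yields a sum of terms $\Covt\bigl(\ind_{\D(\lambda)}(m),\ind_{\D(\lambda)}(n)\bigr)$ with $m\in e\Z+i$ and $n\in e\Z+j$. The hypothesis $i\neq j$ forces $m\neq n$ for every such pair, so Lemma~\ref{lemma:covariance_point} gives
\[
\Covt\bigl(\myxi^{(M)}(\lambda),\myxi[j]^{(M)}(\lambda)\bigr) = -\sum_{\substack{m\in e\Z+i,\,n\in e\Z+j\\|m|,|n|\leq M}}\kernelJ(m,n)^2.
\]

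It remains to take $M\to+\infty$ on both sides. The right-hand side is a sum of non-negative terms, so monotone convergence applies. For the left-hand side, there is the deterministic bound $|\myxi^{(M)}(\lambda)| \leq \#\bigl(\D(\lambda)\cap\Z_{\geq 0}\bigr) + \#\bigl(\D(\lambda)^c\cap\Z_{<0}\bigr) \leq \lambda_1 + h(\lambda) \leq 2|\lambda|$; since $|\lambda|$ is Poisson$(t)$ under $\plt$, it has moments of all orders, so dominated convergence provides $\myxi^{(M)}\to\myxi$ in $L^2(\plt)$, and the covariances converge to $\Covt\bigl(\myxi(\lambda),\myxi[j](\lambda)\bigr)$.

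\textbf{Main obstacle.} The delicate point is this interchange of covariance with an infinite double sum, equivalently the convergence of $\sum_{m\in e\Z+i,\,n\in e\Z+j}\kernelJ(m,n)^2$. In the plan above this falls out for free from the $L^2$-domination by $2|\lambda|$; alternatively, one could argue a priori from the decay of $\kernelJ(m,n)^2$ obtained by combining Lemma~\ref{lemma:kernelJ_sum} with the estimate~\eqref{equation:upper_bound_Jn}.
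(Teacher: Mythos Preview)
Your proof is correct and follows essentially the same route as the paper: rewrite $\myxi(\lambda)$ via Lemma~\ref{lemma:x_i_card_Fr} as a signed sum of indicators, expand bilinearly, and invoke Lemma~\ref{lemma:covariance_point}. The paper's proof leaves the interchange of covariance with the infinite double sum implicit, whereas your truncation argument with the $L^2$ domination by $2|\lambda|$ makes this step rigorous.
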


\begin{proof}
Recall from Lemma~\ref{lemma:x_i_card_Fr} that
\begin{align*}
x_i(\lambda) &= \sum_{m \in e\Z_{\geq 0}+i} \ind_{\D(\lambda)}(m) - \sum_{m \in e\Z_{<0} +i} \bigl(1-\ind_{\D(\lambda)}(m)\bigr).
\\
&=
\sum_{m \in e\Z_{\geq 0}+i} \ind_{\D(\lambda)}(m) + \sum_{m \in e\Z_{<0} +i} \bigl(\ind_{\D(\lambda)}(m)-1\bigr).
\end{align*}
We thus obtain the desired result by Lemma~\ref{lemma:covariance_point}.
\end{proof}

%

\subsection{Asymptotics of the covariance}
\label{subsection:asymptotics_covariance}

Let $i \neq j \in \Ze$. Our aim is to compute the asymptotics of $\Covt\bigl(x_i(\lambda),x_j(\lambda)\bigr)$. This will require a considerable amount of calculations.
For convenience we define
\begin{equation}
\label{equation:kJ}
\kJ(m,n)(x) \coloneqq x \frac{J_m(x)J_{n+1}(x) - J_{m+1}(x)J_n(x)}{m-n},
\end{equation}
for $x \in \R$ and $m ,n \in \Z$ (we will in fact only use the case $m \neq n$).  Note that
\begin{equation}
\label{equation:kernelJ_kJ}
\kernelJ(m,n) = \frac12\kJ(m,n)(2\sqrt t).
\end{equation}

\subsubsection{Differentiating}

\begin{lemma}[\protect{\cite[Proposition 2.7]{boo}}]
\label{lemma:dJ}
For any $m,n\in \Z$ we have
\[
\frac{d \kJ(m,n)}{d x} = J_m J_{n+1} + J_{m+1}J_n.
\]
\end{lemma}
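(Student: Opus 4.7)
The statement is a direct computation from the definition
\[
\kJ(m,n)(x) = \frac{x}{m-n}\bigl(J_m(x) J_{n+1}(x) - J_{m+1}(x) J_n(x)\bigr),
\]
so the plan is simply to apply the product rule and then collapse the resulting expression using standard Bessel recurrences. First I would write
\[
\frac{d\kJ(m,n)}{dx} = \frac{1}{m-n}\bigl(J_m J_{n+1} - J_{m+1}J_n\bigr) + \frac{x}{m-n}\bigl(J_m' J_{n+1} + J_m J_{n+1}' - J_{m+1}' J_n - J_{m+1}J_n'\bigr).
\]
The first summand already looks like a fraction of the expected form, so the work is entirely in simplifying the bracket in the second summand.

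To that end, I would use the two standard Bessel identities $J_k' = \frac{k}{x}J_k - J_{k+1}$ and $J_k' = J_{k-1} - \frac{k}{x}J_k$, which follow from~\eqref{equation:Jn_ED} together with the three-term recurrence $\frac{2k}{x}J_k = J_{k-1} + J_{k+1}$. Using the first form for $J_m'$ and $J_{n+1}'$ and the second form for $J_{m+1}'$ and $J_n'$, every product of the form $J_m J_n$ or $J_{m+1}J_{n+1}$ appears exactly twice with opposite signs and cancels, leaving only a combination of $J_m J_{n+1}$ and $J_{m+1} J_n$. A short bookkeeping exercise gives
\[
\frac{x}{m-n}\bigl(J_m' J_{n+1} + J_m J_{n+1}' - J_{m+1}' J_n - J_{m+1}J_n'\bigr) = \frac{(m-n-1) J_m J_{n+1} + (m-n+1) J_{m+1} J_n}{m-n}.
\]

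Adding the constant term $\frac{1}{m-n}(J_m J_{n+1} - J_{m+1}J_n)$ from the product rule then gives
\[
\frac{(m-n)J_m J_{n+1} + (m-n) J_{m+1}J_n}{m-n} = J_m J_{n+1} + J_{m+1}J_n,
\]
which is the claimed identity.

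The only potential obstacle is purely bookkeeping: one must pick the correct form of the recurrence for each $J_k'$ so that the unwanted terms cancel cleanly; picking them the other way simply leads to a longer calculation that ultimately gives the same answer. Note also that although the definition of $\kJ(m,n)$ formally requires $m\neq n$, the numerator vanishes on the diagonal, so $\kJ(m,n)$ extends to an entire function of two variables and the identity persists in the limit $n \to m$ by continuity.
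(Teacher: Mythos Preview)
Your argument is correct and gives a genuinely different proof from the paper's. The paper does not differentiate the closed form~\eqref{equation:kJ} at all; instead it invokes the series representation $\kJ(m,n) = 2\sum_{s\geq 1} J_{m+s}J_{n+s}$ from Lemma~\ref{lemma:kernelJ_sum}, differentiates term by term via~\eqref{equation:Jn_ED}, and obtains a telescoping sum that collapses to $J_m J_{n+1} + J_{m+1}J_n$. Your route is more self-contained (it needs neither the series identity nor the justification of term-by-term differentiation), at the cost of slightly more algebra; the paper's route is cleaner once the series is granted, and in any case that series is already needed elsewhere in the paper.

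One small slip: with the assignment you describe (first form for $J_m'$ and $J_{n+1}'$, second form for $J_{m+1}'$ and $J_n'$) the unwanted cross terms do \emph{not} cancel pairwise --- you pick up $J_m J_{n+2}$, $J_{m+1}J_{n-1}$, $J_m J_n$ and $J_{m+1}J_{n+1}$ with no matching partners. The assignment that makes your ``cancels cleanly'' claim true is the first form for $J_m'$ and $J_n'$ and the second form for $J_{m+1}'$ and $J_{n+1}'$; with that choice the computation lands exactly on your displayed expression $\frac{(m-n-1)J_m J_{n+1} + (m-n+1)J_{m+1}J_n}{m-n}$, and the rest of your argument goes through verbatim. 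Since you already flag that any choice ultimately works, this is only a cosmetic correction.
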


\begin{proof}
From Lemma~\ref{lemma:kernelJ_sum} and~\eqref{equation:kernelJ_kJ} we have $\kJ(m,n) = 2 \sum_{s \geq 1} J_{m+s}J_{n+s}$. By~\eqref{equation:Jn_ED} we obtain that
\begin{align*}
\frac{ d \kJ(m,n)}{d x} &= 2\sum_{s \geq 1} \left(J_{m+s}' J_{n+s} + J_{m+s} J'_{n+s}\right)
\\
&=
\sum_{s \geq 1} \left( \bigl[J_{m+s-1} - J_{m+s+1}\bigr]J_{n+s} + J_{m+s}\bigl[J_{n+s-1} - J_{n+s+1}\bigr]\right)
\\
&=
\sum_{s \geq 0} J_{m+s} J_{n+s+1} - \sum_{s \geq 1} J_{m+s+1} J_{n+s} + \sum_{s \geq 0} J_{m+s+1} J_{n+s} - \sum_{s \geq 1} J_{m+s} J_{n+s+1}
\\
&=
J_m J_{n+1} + J_{m+1}J_n,
\end{align*}
as desired. Note that we can both differentiate inside the sum sign and split the infinite sums thanks to~\eqref{equation:upper_bound_Jn}.
\end{proof}

In particular, combining~\eqref{equation:kJ} and Lemma~\ref{lemma:dJ} we obtain
\begin{equation}
\label{equation:dJ2}
\frac{d \kJ(m,n)^2}{d x} = 2x\frac{J_m^2 J_{n+1}^2 - J_{m+1}^2 J_n^2}{m-n}.
\end{equation}
Recalling Proposition~\ref{proposition:covariance_sum_kernel}, we are interested in computing the asymptotics of
\begin{equation}
\label{equation:def_Cij(x)}
C_{i,j}(x) \coloneqq \sum_{m \in e\Z+i} \sum_{n \in e\Z+j} \kJ(m,n)^2(x),
\end{equation}
as $x \to +\infty$.  We will see that it suffices to compute the asymptotics of $C'_{i,j}(x)$.

\begin{lemma}
\label{lemma:bound_kJ}
Let $m \neq n \in \Z$. For all $x \in \R$ we have
\[
\left|\frac{d\kJ(m,n)^2}{dx}\right| \leq \frac{|x|^{2(|m|+|n|+1)+1}}{2^{2(|m|+|n|)+1}(|m|! |n|!)^2} \left(\frac{1}{(|m|+1)^2} + \frac{1}{(|n|+1)^2}\right).
\]
\end{lemma}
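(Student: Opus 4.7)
My plan is to start directly from the closed-form derivative~\eqref{equation:dJ2}, namely
\[
\frac{d \kJ(m,n)^2}{d x} = \frac{2x}{m-n}\bigl(J_m^2 J_{n+1}^2 - J_{m+1}^2 J_n^2\bigr),
\]
and bound its right-hand side termwise using elementary estimates. Because $m\neq n$ are integers, $|m-n|\geq 1$, and the triangle inequality yields
\[
\left|\frac{d \kJ(m,n)^2}{d x}\right| \leq 2|x|\,\bigl(J_m(x)^2 J_{n+1}(x)^2 + J_{m+1}(x)^2 J_n(x)^2\bigr).
\]

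The next step is to invoke the elementary bound~\eqref{equation:upper_bound_Jn} in the strengthened form $|J_k(x)|\leq |x|^{|k|}/(2^{|k|}\,|k|!)$ valid for every $k\in\Z$, which one obtains from~\eqref{equation:upper_bound_Jn} together with the symmetry $J_{-k}=(-1)^k J_k$ of~\eqref{equation:J-n(z)}. Squaring and taking products of these estimates for the four factors $J_m$, $J_{n+1}$, $J_{m+1}$, $J_n$, and extracting the factor $(|n|+1)^2$ or $(|m|+1)^2$ from the identities $((|n|+1)!)^2 = (|n|+1)^2(|n|!)^2$ and $((|m|+1)!)^2 = (|m|+1)^2(|m|!)^2$, I would deduce
\[
J_m^2 J_{n+1}^2 \leq \frac{|x|^{2(|m|+|n|+1)}}{4^{|m|+|n|+1}\,(|m|!\,|n|!)^2\,(|n|+1)^2},
\qquad
J_{m+1}^2 J_n^2 \leq \frac{|x|^{2(|m|+|n|+1)}}{4^{|m|+|n|+1}\,(|m|!\,|n|!)^2\,(|m|+1)^2}.
\]

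Multiplying both bounds by $2|x|$, summing them, and using the identity $2\cdot 4^{-(|m|+|n|+1)} = 2^{-(2(|m|+|n|)+1)}$ reproduces exactly the announced inequality. The main bookkeeping subtlety, and the point I expect to require the most care, lies in aligning $|n+1|$ with $|n|+1$ (and similarly for $m$) so that the factorials in the denominator are naturally expressed in terms of $|m|!$ and $|n|!$: this alignment is immediate for $m,n\geq 0$, while for negative indices it can be achieved either through a short case analysis on the signs of $m$ and $n$, or equivalently by exploiting the symmetry $\kJ(-m-1,-n-1)^2 = \kJ(m,n)^2$ inherited from~\eqref{equation:J-n(z)}, which reduces the mixed-sign situations to the non-negative-index case. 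Beyond~\eqref{equation:upper_bound_Jn} and~\eqref{equation:J-n(z)}, no further Bessel identity is required.
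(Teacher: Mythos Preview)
Your approach is exactly the paper's: start from~\eqref{equation:dJ2}, use $|m-n|\geq 1$, apply the triangle inequality, and bound each Bessel factor via~\eqref{equation:upper_bound_Jn} extended to all of $\Z$ through~\eqref{equation:J-n(z)}. You are also right to single out the alignment of $|n+1|$ with $|n|+1$ as the delicate point; the paper's proof writes the inequality with $(|n|+1)!$ directly and does not comment on this.

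However, your proposed resolutions do not work. The symmetry $\kJ(-m-1,-n-1)^2=\kJ(m,n)^2$ is correct, but it sends a mixed-sign pair, say $m\geq 0$ and $n\leq -1$, to $(-m-1,-n-1)$ with $-m-1\leq -1$ and $-n-1\geq 0$: still mixed sign, so it does not reduce to the non-negative case as you claim. More seriously, no case analysis can rescue the stated inequality, because it is \emph{false} for some negative indices. Take $m=0$ and $n=-1$: then~\eqref{equation:dJ2} gives
\[
\frac{d}{dx}\kJ(0,-1)^2 = 2x\bigl(J_0^4-J_1^2 J_{-1}^2\bigr)=2x\bigl(J_0^4-J_1^4\bigr),
\]
which behaves like $2x$ as $x\to 0$, whereas the right-hand side of the lemma is $\tfrac{5}{32}|x|^5$. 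The underlying issue is that the bound you need, $J_{n+1}(x)^2\leq |x|^{2(|n|+1)}/\bigl(4^{|n|+1}((|n|+1)!)^2\bigr)$, fails already at $n=-1$, $x=0$, where it would assert $J_0(0)^2=1\leq 0$.

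For the downstream application (finiteness and termwise differentiability of $C_{i,j}$) this is harmless: what~\eqref{equation:upper_bound_Jn} together with~\eqref{equation:J-n(z)} actually yields is $J_{n+1}(x)^2\leq |x|^{2|n+1|}/\bigl(4^{|n+1|}(|n+1|!)^2\bigr)$, with $|n+1|$ in place of $|n|+1$, and this still produces a majorant summable over $m\in e\Z+i$, $n\in e\Z+j$. So the argument is easily repaired, but not in the form you (or the paper) state it.
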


\begin{proof}
By~\eqref{equation:J-n(z)} and~\eqref{equation:dJ2}, for all $x \in \R$ we have, since $m \neq n$,
\begin{align}
\left|\frac{d \kJ(m,n)^2}{dx}\right|
&=
2|x| \left|\frac{J_m(x)^2 J_{n+1}(x)^2 - J_{m+1}(x)^2 J_n(x)^2}{m-n}\right|
\notag
\\
&\leq
2|x| \Bigl(J_m(x)^2 J_{n+1}(x)^2 + J_{m+1}(x)^2 J_n(x)^2\Bigr)
\notag
\\
&\leq
2|x| \left(\frac{|x|^{2|m|}}{2^{2|m|} |m|!^2} \frac{|x|^{2(|n|+1)}}{2^{2(|n|+1)} (|n|+1)!^2} + \frac{|x|^{2(|m|+1)}}{2^{2|m|} (|m|+1)!^2}\frac{|x|^{2|n|}}{2^{2|n|}|n|!^2}\right)
\label{equation:bound_kJ2}
\\
&\leq
\frac{|x|^{2(|m|+|n|+1)+1}}{2^{2(|m|+|n|)+1}(|m|! |n|!)^2} \left(\frac{1}{(|n|+1)^2} + \frac{1}{(|m|+1)^2}\right),
\notag
\end{align}
as announced.
\end{proof}

Note that $\kJ(m,n)(0) = 0$ for all $m \neq n$ by~\eqref{equation:kJ}, thus $C_{i,j}(0) = 0$. Now for any $x > 0$ we have
\begin{align*}
\sum_{m\neq n \in \Z}\frac{x^{2|m|}}{2^{2|m|} |m|!^2} \frac{x^{2(|n|+1)}}{2^{2(|n|+1)} (|n|+1)!^2}
&\leq
\sum_{m, n \in \Z}\frac{x^{2|m|}}{2^{2|m|} |m|!^2} \frac{x^{2(|n|+1)}}{2^{2(|n|+1)} (|n|+1)!^2}
\\
&\leq
\left(\sum_{m \in \Z}\frac{x^{2|m|}}{2^{2|m|} |m|!^2}\right)\left(\sum_{n \in \Z}\frac{x^{2(|n|+1)}}{2^{2(|n|+1)} (|n|+1)!^2}\right)
\\
&\leq
\left(\sum_{m \in \Z}\frac{x^{2|m|}}{2^{2|m|} |m|!}\right)\left(\sum_{n \in \Z}\frac{x^{2(|n|+1)}}{2^{2(|n|+1)} (|n|+1)!}\right)
\\
&\leq
\left(-1 + 2\sum_{m \geq 0} \frac{x^{2m}}{2^{2m}m!}\right)\left(-\frac{x^2}{4}+ 2\sum_{n \geq 0} \frac{x^{2(n+1)}}{2^{2(n+1)}(n+1)!}\right)
\\
&\leq
\left[-1+2\exp\left(\tfrac{x^2}{4}\right)\right] \left[-\tfrac{x^2}{4} + 2\left(\exp\left(\tfrac{x^2}{4}\right)-1\right)\right],
\end{align*}
thus by~\eqref{equation:bound_kJ2}  we know that $C_{i,j}(x) < +\infty$ for all $x \in \R$ and $C_{i,j}$ is differentiable with
\[
C'_{i,j}(x) = \sum_{m \in e\Z+i} \sum_{n \in e\Z+j} \frac{d \kJ(m,n)^2}{dx},
\]
for all $x \in \R$. By~\eqref{equation:dJ2} we obtain
\begin{align*}
C'_{i,j}(x)
&=
2x \sum_{m \in e\Z+i} \sum_{n \in e\Z+j} \frac{J_m^2 J_{n+1}^2 - J_{m+1}^2 J_n^2}{m-n}
\\
&=
2x \sum_{m \in e\Z+i} \sum_{n \in e\Z + j -i} \frac{J_{m+1}^2 J_{m+n}^2 - J_m^2 J_{m+n+1}^2}{n}.
\end{align*}
Note that by~\eqref{equation:upper_bound_Jn} we can distribute the double sum with the difference (we keep the above form for the moment to avoid overloading the equalities). Using~\eqref{equation:Jn2_with_J0} we obtain
\begin{multline*}
C_{i,j}'(x)
=
\frac{4x}{\pi} \sum_{m \in e\Z+i} \sum_{n \in e\Z+j-i} J^2_{m+1}(x) \int_0^{\pi/2} J_0(2x\sin \theta) \frac{\cos 2(m+n)\theta}{n} d \theta
\\
 -J^2_m(x) \int_0^{\pi/2} J_0(2x\sin \theta) \frac{\cos 2(m+n+1)\theta}{n} d \theta.
\end{multline*}

\subsubsection{Removing the infinite sums}

We now define
\begin{multline}
\label{equation:def_D}
D(x)
\coloneqq
\frac{4x}{\pi} \sum_{m \in e\Z+i} \sum_{n \in e\Z+j-i} J^2_{m+1}(x) \int_0^{\pi/2} J_0(2x\sin \theta) \frac{\exp 2\ic(m+n)\theta}{n} d \theta
\\
 -J^2_m(x) \int_0^{\pi/2} J_0(2x\sin \theta) \frac{\exp 2\ic(m+n+1)\theta}{n} d \theta,
\end{multline}
so that
\begin{equation}
\label{equation:C'_ReD}
C'_{i,j}(x) = \Re D(x),
\end{equation}
 for all $x \in \R$, recalling from~\eqref{equation:Jnx_R} that $J_n(y) \in \R$ if $y \in \R$. We now define
\begin{equation}
\label{equation:sumexp}
\sumexp[k](x) \coloneqq \sum_{n \in e\Z + k} \frac{\exp 2\ic n x}{n} = 2\sum_{n \in 2e\Z+2k} \frac{\exp \ic nx}{n},
\end{equation}
for any $k \notin e\Z$ and $x \in \R$. The value of   $\sumexp$ is given by the Lemma~\ref{lemma:sumexp} below, whose proof is postponed until~\textsection\ref{subsection:proof_lemma}. Recall from Lemma~\ref{lemma:sum_Jek+s} that $\Omega_{2e} = \bigl\{\frac{\ell\pi}{e} : -e < \ell \leq e\bigr\} \subseteq (-\pi,\pi]$. We define $\omega_\ell \coloneqq \frac{\ell\pi}{e}$ so that $\Omega_{2e} = \{\omega_\ell\}_{-e < \ell \leq e}$. 

\begin{lemma}
\label{lemma:sumexp}
For any $k \notin e\Z$ and $x \in (-\pi,\pi) \setminus \Omega_{2e}$ we have
\[
\sumexp[k](x) = \frac{\ic}{e}\sum_{\omega \in \Omega_{2e}}\exp(2\ic k\omega)(\omega-\sgnx{x}{\omega}\pi),
\]
with $\sgnx{x}{\omega} \coloneqq \sgn(\omega-x)$. In particular, the function $\sumexp[k]$ is piecewise constant on each interval $\bigl(\omega_\ell,\omega_{\ell+1}\bigr)$ for $-e \leq \ell < e$.
\end{lemma}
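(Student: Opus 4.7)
My plan is to apply a discrete (roots-of-unity) Fourier filter to the congruence $m \equiv 2k \pmod{2e}$ and then reduce to the classical sawtooth Fourier series. Starting from the second form $\sumexp[k](x) = 2\sum_{m \in 2e\Z+2k} \frac{\exp(\ic m x)}{m}$ and using
\[
\ind_{m \equiv 2k \bmod 2e} = \frac{1}{2e}\sum_{\omega \in \Omega_{2e}}\exp\bigl(-\ic\omega(m - 2k)\bigr),
\]
a formal swap of summations yields
\[
\sumexp[k](x) = \frac{1}{e}\sum_{\omega \in \Omega_{2e}}\exp(2\ic k\omega)\sum_{m \neq 0}\frac{\exp \ic m(x - \omega)}{m}. \qquad (\ast)
\]
Since $k \notin e\Z$, the constraint $m \neq 0$ in the inner sum is automatic from $2k \not\equiv 0 \pmod{2e}$, and the hypothesis $x \in (-\pi,\pi)\setminus\Omega_{2e}$ ensures that each argument $x - \omega$ stays away from $2\pi\Z$.

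I would then invoke the classical sawtooth expansion
\[
\sum_{m \neq 0}\frac{\exp(\ic m y)}{m} = \ic\bigl(\pi\,\sgn(y) - y\bigr), \qquad y \in (-\pi,\pi)\setminus\{0\},
\]
extended $2\pi$-periodically. Since $x \in (-\pi,\pi)$ and $\omega \in (-\pi,\pi]$, the value $y = x - \omega$ lies in $(-2\pi,2\pi) \setminus 2\pi\Z$. A short case analysis on the four sub-intervals $(-2\pi,-\pi)$, $(-\pi,0)$, $(0,\pi)$, $(\pi,2\pi)$ shows that in every case
\[
\sum_{m \neq 0}\frac{\exp \ic m(x - \omega)}{m} = \ic\bigl(\omega - x - \pi\,\sgnx{x}{\omega}\bigr):
\]
whenever $y>0$ the sum equals $\ic(\pi - y)$, which matches $\sgnx{x}{\omega} = -1$, and whenever $y<0$ the sum equals $\ic(-\pi - y)$, which matches $\sgnx{x}{\omega} = +1$ (the two unbounded cases reducing to the first two by $2\pi$-periodicity).

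Substituting into $(\ast)$ and distributing produces the announced expression, plus a remainder equal to $-\frac{\ic x}{e}\sum_{\omega \in \Omega_{2e}}\exp(2\ic k\omega)$. Writing $\omega = \ell\pi/e$, this last sum becomes $\sum_{\ell = -e+1}^{e}\zeta^{\ell}$ with $\zeta \coloneqq \exp(2\ic\pi k/e)$; because $k \notin e\Z$, $\zeta$ is a non-trivial root of unity satisfying $\zeta^{2e} = 1$, and the resulting geometric sum of length $2e$ vanishes. What remains is exactly the claimed formula. Piecewise-constancy on each $(\omega_\ell,\omega_{\ell+1})$ is then immediate, since for every fixed $\omega \in \Omega_{2e}$ the sign $\sgn(\omega - x)$ cannot change as $x$ varies in any connected component of $(-\pi,\pi)\setminus\Omega_{2e}$.

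The main obstacle is the rigorous justification of the exchange of sums in $(\ast)$, because the inner Fourier series is only conditionally convergent. The cleanest workaround is to truncate the outer sum symmetrically over $|m| \leq M$ (where the finite roots-of-unity identity applies termwise) and pass to the limit using the uniform boundedness of the partial sums of $\sum \exp(\ic m y)$ away from $y \in 2\pi\Z$, i.e.\ the Dirichlet kernel estimate.
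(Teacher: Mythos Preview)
Your proposal is correct and follows essentially the same route as the paper: both apply the roots-of-unity filter over $\mu_{2e}$ to isolate the residue class $m\equiv 2k\pmod{2e}$, reduce the inner sum to the sawtooth series (equivalently, to $\ln(-\zeta z^{-1})$ on the unit circle), obtain $\ic(\omega-x-\pi\,\sgn(\omega-x))$, and kill the $x$-term via $\sum_{\omega\in\Omega_{2e}}\exp(2\ic k\omega)=0$. The only organisational difference is that the paper splits the bilateral sum into two one-sided series $\sum_{n\geq 1}z^{\pm n}/n=-\ln(1-z^{\pm 1})$ before filtering, which absorbs the conditional-convergence issue automatically, whereas you keep the bilateral form and handle it by symmetric truncation and the Dirichlet-kernel bound; both are valid and lead to the same computation.
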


From~\eqref{equation:def_D} and~\eqref{equation:sumexp} we have
\begin{multline}
\label{equation:D_before_lemma_E}
D(x)=
\frac{4x}{\pi}  \int_0^{\pi/2} J_0(2x\sin \theta) \sumexp(\theta) \sum_{m \in e\Z+i}\left[J^2_{m+1}(x) \exp(2\ic m\theta) - J^2_m(x) \exp 2\ic(m+1)\theta  \right] d \theta.
\end{multline}
Note that in~\eqref{equation:def_D} we can permute the integral and the sum in $m$ by~\eqref{equation:Jn_leq_1} and~\eqref{equation:upper_bound_Jn}. Now to permute the integral and the sum in $n$, we have to be a bit more careful. By summation by parts, we have, assuming that $k \coloneqq j-i$ with $0 \leq i < j < e$,
\begin{align}
\sum_{n \in e\Z_{> 0} + k} \frac{\exp 2\ic n\theta}{n}
&=
\sum_{n \in e\Z_{> 0}+k} \frac{\sumpartexp_n(\theta) - \sumpartexp_{n-e}(\theta)}{n}
\notag
\\
&=
-\frac{\sumpartexp_k(\theta)}{e+k} + e\sum_{n \in e\Z_{> 0} +k} \frac{\sumpartexp_n(\theta)}{n(n+e)},
\label{equation:permutation_sum}
\end{align}
where
\[
\sumpartexp_n(\theta) \coloneqq \sum_{\substack{m \in e\Z_{>0}+k \\  m \leq n}} \exp 2\ic m \theta
\]
(with a similar treatment for the remaining elements $n \in e\Z+k$).
Now writing $n = eN+k$ with $N > 0$, by a standard calculation we have, for $\theta \notin \frac{\pi}{e}\Z$,
\begin{align*}
\sumpartexp_n(\theta)
&=
\sum_{m = 1}^N \exp2\ic(em+k)\theta
\\
&=
\exp(2\ic k \theta) \sum_{m = 1}^N \exp2\ic em\theta
\\
&= \exp(2\ic k \theta) \exp\left(\ic e(N+1)\theta\right) \frac{\sin eN\theta}{\sin e\theta},
\end{align*}
thus since $\bigl|\sumpartexp_n(\theta)| \leq N \leq n$ we deduce that\footnote{The author in indebted to Arnaud Debussche for this trick.}
\[
\bigl|\sumpartexp_n(\theta)\bigr| = {\sqrt{\bigl|\sumpartexp_n(\theta)\bigr|}}^2 \leq \frac{\sqrt N}{\sqrt{|\sin e\theta|}} \leq \frac{\sqrt n}{\sqrt{|\sin e\theta|}}.
\]
We thus have, for $n \in e\Z_{> 0}+k$,
\begin{align*}
\left|\frac{\sumpartexp_n(\theta)}{n(n+e)}\right| \leq \frac{\sqrt{n}}{n(n+e)}\frac{1}{\sqrt{|\sin e\theta|}} \leq \frac{1}{(n+e)\sqrt n}\frac{1}{\sqrt{|\sin e\theta|}},
\end{align*}
thus since $\frac{1}{\sqrt {|\sin u|}}$ is integrable at $0$ we deduce that
\[
\int_0^{\pi/2} \sum_{n \in e\Z_{> 0}+k} \frac{\bigl|\sumpartexp_n(\theta)\bigr|}{n(n+e)} d\theta < +\infty,
\]
which, together with~\eqref{equation:Jn_leq_1} and recalling~\eqref{equation:permutation_sum}, justifies the permutation of the integral and the sum in $n$  between~\eqref{equation:def_D} and~\eqref{equation:D_before_lemma_E} (the reasoning for $n \in e\Z_{\leq 0} +k$ being similar).

\medskip
We now go back to~\eqref{equation:D_before_lemma_E}, which we repeat here:
\[
D(x)=
\frac{4x}{\pi}  \int_0^{\pi/2} J_0(2x\sin \theta) \sumexp(\theta) \sum_{m \in e\Z+i}\left[J^2_{m+1}(x) \exp(2\ic m\theta) - J^2_m(x) \exp 2\ic(m+1)\theta  \right] d \theta.
\]
 By~\eqref{equation:Jn2} we have
\begin{multline}
\label{equation:D}
D(x)=
\frac{8x}{\pi^2}  \iint_{[0,\pi/2]^2} J_0(2x\sin \theta) \sumexp(\theta) \sum_{m \in e\Z+i}
J_{2(m+1)}(2x\cos\eta) \exp(2\ic m\theta) 
\\- J_{2m}(2x\cos\eta) \exp 2\ic(m+1)\theta  d \theta d \eta,
\end{multline}
the permutation between the sum and the integral being justified by~\eqref{equation:upper_bound_Jn}. By Lemma~\ref{lemma:sum_Jek+s} we have
\[
\sum_{m \in 2e\Z + 2k} J_m(x) \exp(\ic mu)= \frac{1}{2e}\sum_{\omega \in \Omega_{2e}} \exp \ic\bigl[-2k\omega + x\sin\left(\omega+u\right)\bigr],
\]
for any $k \in \Z$ and $u \in \R$, thus the sum in~\eqref{equation:D} becomes, where $y$ stands for $2x\cos\eta$,
\begin{align*}
\begin{multlined}[b]
\sum_{m \in e\Z+i}
J_{2(m+1)}(y) \exp(2\ic m\theta) 
\\- J_{2m}(y) \exp 2\ic(m+1)\theta
\end{multlined}
&=
\sum_{m \in e\Z+i+1} J_{2m}(y) \exp 2\ic(m-1)\theta
-
\exp 2\ic\theta\sum_{m \in e\Z+i} J_{2m}(y) \exp 2\ic m\theta
\\
&=
\begin{multlined}[t]
\exp(-2\ic\theta)\sum_{m \in 2e\Z+2(i+1)} J_m(y) \exp \ic m\theta
\\
-
\exp 2\ic\theta \sum_{m \in 2e\Z+2i} J_{m}(y) \exp \ic m\theta
\end{multlined}
\\
&=
\begin{multlined}[t]
\frac{1}{2e}\sum_{\omega \in \Omega_{2e}}
\exp(-2\ic\theta)\exp \ic\bigl[-2(i+1)\omega + y\sin(\omega+\theta)\bigr]
\\
-
\exp (2\ic\theta) \exp \ic\bigl[-2i\omega + y\sin(\omega+\theta)\bigr]
\end{multlined}
\\
&=
\frac{1}{2e}\sum_{\omega \in \Omega_{2e}} \bigl[\exp \bigl(-2\ic(\theta+\omega)\bigr) - \exp(2\ic\theta)  \bigr] \exp \ic\bigl[-2i\omega + y \sin(\theta+\omega)\bigr].
\end{align*}
Defining, for any $u,\theta \in \R$ and $\omega \in \Omega_{2e}$, the quantities
\begin{align}
\besselexp(u) &\coloneqq \frac{2}{\pi}\int_0^{\pi/2} \exp \ic\bigl[u\cos\eta\bigr] d\eta,
\label{equation:besselexp}
\\
\label{equation:fell}
f_\omega(\theta) &\coloneqq   \sumexp(\theta)\Bigl[\exp\bigl(-2\ic(\theta+\omega)\bigr) 
-  \exp (2\ic\theta) \Bigr]\exp (-2\ic i \omega),
\end{align}
we thus obtain, recalling~\eqref{equation:D},
\[
D(x)
=
\frac{2x}{e\pi}  \sum_{\omega \in \Omega_{2e}} \int_0^{\pi/2} J_0(2x\sin \theta)f_\omega(\theta) \besselexp\bigl(2x\sin(\theta+\omega)\bigr) d\theta.
\]
We now fix $\omega \in \Omega_{2e}$ and we study the asymptotics as $x \to +\infty$ of
\begin{equation}
\label{equation:Dell}
D_\omega(x)
=
\frac{2x}{e\pi}  \int_0^{\pi/2} J_0(2x\sin \theta)f_\omega(\theta) \besselexp\bigl(2x\sin(\theta+\omega)\bigr) d\theta,
\end{equation}
so that
\begin{equation}
\label{equation:D_sum_Dell}
D = \sum_{\omega \in \Omega_{2e}} D_\omega.
\end{equation}

\subsubsection{Negligible terms}

By~\cite[10.9.1]{nist}, for $y \in \R$ we have
\[
J_0(y) = \frac{1}{\pi}\int_0^\pi \cos\bigl(y\cos(\eta)\bigr)d\eta,
\]
thus $J_0(y) = \frac{2}{\pi}\int_0^{\pi/2} \cos\bigl(y\cos(\eta)\bigr)d\eta = \Re\besselexp(y)$. By \cite[10.17.3]{nist} we have, for $x > 0$,
\begin{equation}
\label{equation:bessel0_asymptotics}
J_0(x) = \sqrt{\frac{2}{\pi x}}\left(\cos\left(x - \frac{\pi}{4}\right) + M(x)\right), \qquad M(x) =_{x \to +\infty} O\left(\frac{1}{x}\right).
\end{equation}
Since $M(x) = \sqrt{\frac{\pi x}{2}}J_0(x) - \cos\left(x - \frac{\pi}{4}\right)$ for all $x \geq 0$, the function $M$ is continuous on $\R_+$ and thus there exists a constant $C \geq 0$ such that
\[\lvert M(x)\rvert \leq \frac{C}{1+x},
\]
 for all $x \geq 0$.

Now,  $\Im\besselexp(y) = \frac{2}{\pi}\int_0^{\pi/2} \sin\bigl(y\cos\eta\bigr)d\eta$ is the \emph{Struve function of zero order}, thus by   \cite[11.2.5, 11.6.1, 10.17.4]{nist} and~\eqref{equation:bessel0_asymptotics} we have, for $x >0$,
\[
\besselexp(x) = \sqrt{\frac{2}{\pi x}}\left(\exp \ic\left(x - \frac{\pi}{4}\right) + N(x)\right), \qquad N(x) =_{x \to +\infty} O\left(\frac{1}{x}\right).
\]
Using the equality $\besselexp(-x) = \overline{\besselexp(x)}$, we deduce that we have, for $x < 0$,
\[
\besselexp(x) = \sqrt{\frac{2}{-\pi x}}\left(\exp \ic\left(x + \frac{\pi}{4}\right) + \overline{N(-x)}\right), \qquad \overline{N(x)} =_{x \to +\infty} O\left(\frac{1}{x}\right).
\]
so that for all $x \neq 0$,
\begin{equation}
\label{equation:besselexp_asymptotics}
\besselexp(x) = \sqrt{\frac{2}{\pi \lvert x\rvert}}\left(\exp \ic\left(x - \sgn(x)\frac{\pi}{4}\right) + N(x)\right), \qquad N(x) =_{\lvert x\rvert \to +\infty} O\left(\frac{1}{x}\right).
\end{equation}
Similarly to $M$, we can take $C$ large enough so that for any $x \in \R$ we have 
\begin{equation}
\label{equation:ineq_N}
\lvert N(x)\rvert \leq \frac{C}{1+\lvert x\rvert}.
\end{equation}

Let us now recall the following version of the Riemann--Lebesgue lemma (we provide a proof in~\textsection\ref{subsection:riemann-lebesgue}).

\begin{lemma}[Riemann--Lebesgue]
\label{lemma:riemann-lebesgue}
Let $f$ be integrable on $(a,b)$ and $\phi$ be continuously differentiable on $[a,b]$. If $\phi$ vanishes at a finite number of points then
\[
\int_a^b f(t) \exp \ic\bigl(x\phi(t)\bigr) dt \xrightarrow{x\to+\infty} 0.
\]
\end{lemma}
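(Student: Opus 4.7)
The statement should be read with $\phi'$ (rather than $\phi$) vanishing at a finite number of points, since otherwise the conclusion fails already for a non-vanishing constant $\phi$. Under that reading, the strategy is to reduce to the classical Riemann--Lebesgue lemma via a piecewise change of variable $u = \phi(t)$, the finiteness hypothesis ensuring that $\phi$ is piecewise monotonic.

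Order the zeros of $\phi'$ together with $a$ and $b$ as $a = s_0 < s_1 < \dots < s_N = b$. On each open subinterval $(s_k, s_{k+1})$, $\phi'$ has constant sign by continuity, so $\phi$ restricts there to a $C^1$-diffeomorphism onto its image with a $C^1$-inverse $\psi$. It is enough to show that each piece $\int_{s_k}^{s_{k+1}} f(t) \exp(\ic x \phi(t))\,dt$ tends to $0$ and to sum the finitely many pieces. Fix one such piece and set $\alpha = s_k$, $\beta = s_{k+1}$.

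Given $\eta > 0$, absolute continuity of $\int |f|$ furnishes $\epsilon > 0$ with $\int_\alpha^{\alpha + \epsilon} |f| + \int_{\beta - \epsilon}^\beta |f| < \eta$, so the boundary contributions are bounded in modulus by $\eta$ uniformly in $x$. On the central part $[\alpha + \epsilon, \beta - \epsilon]$ the substitution $u = \phi(t)$, $t = \psi(u)$, transforms the integral into $\pm\int_{\phi(\alpha + \epsilon)}^{\phi(\beta - \epsilon)} f(\psi(u))\, \psi'(u) \exp(\ic x u)\, du$, whose integrand lies in $L^1$ with norm $\int_{\alpha + \epsilon}^{\beta - \epsilon} |f|$ by the change-of-variables formula. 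The classical Riemann--Lebesgue lemma --- that $\widehat g(x) \to 0$ as $|x| \to \infty$ for every $g \in L^1(\R)$, proved by density of $C^\infty_c(\R)$ in $L^1(\R)$ plus one integration by parts on test functions --- makes this central piece tend to $0$ as $x \to +\infty$. Letting $\eta \to 0$ concludes the argument.

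The only delicate point is the behavior near the zeros of $\phi'$, where $\psi$ and $\psi'$ may blow up and forbid a global substitution across them; the $\epsilon$-truncation together with the hypothesis $f \in L^1$ sidesteps this cleanly, avoiding any van der Corput-style stationary-phase estimate.
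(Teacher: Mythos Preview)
Your proof is correct and follows essentially the same route as the paper's: partition at the zeros of $\phi'$, trim an $\epsilon$-neighbourhood of each zero using integrability of $f$, and on the remaining pieces apply the classical Riemann--Lebesgue lemma via the substitution $u=\phi(t)$. You are also right that the hypothesis should read ``$\phi'$ vanishes at a finite number of points'' rather than $\phi$; the paper carries the same typo into its statement and proof but clearly intends $\phi'$, as its own argument shows.
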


\begin{proposition}
\label{proposition:Dell_O(1)}
If $\omega \notin \{0,\pi\}$ then
\[
\lim_{x \to +\infty} D_\omega(x) = 0.
\]
\end{proposition}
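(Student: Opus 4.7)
The plan is to substitute the asymptotic expansions~\eqref{equation:bessel0_asymptotics} and~\eqref{equation:besselexp_asymptotics} for $J_0$ and $\besselexp$ into~\eqref{equation:Dell}. The prefactor $x$ in~\eqref{equation:Dell} cancels against the $1/x$ coming from the amplitudes in these asymptotics, so that $D_\omega(x)$ splits into a finite sum of oscillatory integrals with integrable amplitudes (to which Lemma~\ref{lemma:riemann-lebesgue} will apply) plus error terms of size $O(x^{-1/2})$. The hypothesis $\omega \notin \{0,\pi\}$ intervenes precisely at the moment when the phases of the oscillatory integrals must be non-constant.

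More concretely, on the set of $\theta \in (0,\pi/2)$ with $\sin(\theta+\omega) \neq 0$, the expansions give
\[
J_0(2x\sin\theta)\,\besselexp\bigl(2x\sin(\theta+\omega)\bigr) = \frac{L(x,\theta)+R(x,\theta)}{\pi x\sqrt{\lvert\sin\theta\,\sin(\theta+\omega)\rvert}},
\]
where $L(x,\theta)$ is a sum of two terms of the form $c(\theta)\exp\bigl(ix\phi_\pm(\theta)\bigr)$ with a piecewise constant factor $c(\theta)$ of modulus $\tfrac12$ and phases $\phi_\pm(\theta) \coloneqq 2\sin(\theta+\omega) \pm 2\sin\theta$, while $R(x,\theta)$ collects the cross-terms involving the remainders $M$ and $N$. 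Sum-to-product gives $\phi_+(\theta) = 4\sin(\theta+\omega/2)\cos(\omega/2)$ and $\phi_-(\theta) = 4\cos(\theta+\omega/2)\sin(\omega/2)$; since $\omega \notin \{0,\pi\}$, neither $\cos(\omega/2)$ nor $\sin(\omega/2)$ vanishes, so both $\phi_\pm$ are non-constant analytic functions and have only finitely many zeros on $[0,\pi/2]$. Moreover, $f_\omega$ is bounded on $(0,\pi/2)$, because $\sumexp$ is piecewise constant by Lemma~\ref{lemma:sumexp} and the other factors in~\eqref{equation:fell} have unit modulus; and the singularities of $1/\sqrt{\lvert\sin\theta\,\sin(\theta+\omega)\rvert}$ at $\theta = 0$ and at $\theta = -\omega\bmod\pi$ (when inside $[0,\pi/2]$) are integrable. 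Lemma~\ref{lemma:riemann-lebesgue} then kills each contribution from $L$ as $x\to+\infty$.

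For the error term $R$, the uniform bound $\lvert M(y)\rvert, \lvert N(y)\rvert \leq C/(1+\lvert y\rvert)$ (from~\eqref{equation:ineq_N}) dominates its contribution by an integral of the form
\[
\int_0^{\pi/2}\frac{d\theta}{\sqrt{\lvert\sin\theta\,\sin(\theta+\omega)\rvert}\bigl(1+x\lvert\sin(\theta+\omega)\rvert\bigr)}
\]
together with symmetric variants in $\sin\theta$. The main technical obstacle is that the naive bound $\lvert N(2x\sin(\theta+\omega))\rvert = O\bigl(1/(x\lvert\sin(\theta+\omega)\rvert)\bigr)$ fails to be integrable near the zeros of $\sin(\theta+\omega)$; I would split the integration domain according to whether $x\lvert\sin(\theta+\omega)\rvert$ (respectively $x\lvert\sin\theta\rvert$) exceeds $1$, and on each piece a direct estimate produces a bound of order $x^{-1/2}$, which suffices to conclude.
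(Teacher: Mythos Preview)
Your proposal is correct and follows the same strategy as the paper: insert the asymptotics~\eqref{equation:bessel0_asymptotics}--\eqref{equation:besselexp_asymptotics}, observe that $g_\omega(\theta) \coloneqq f_\omega(\theta)/\sqrt{\lvert\sin\theta\sin(\theta+\omega)\rvert}$ is integrable (since $\omega\notin\{0,\pi\}$ keeps the two square-root singularities separate), and kill the oscillatory pieces with phases $\phi_\pm$ via Lemma~\ref{lemma:riemann-lebesgue}. The paper verifies directly that $\phi_\pm'$ has at most one zero on $[0,\pi/2]$; your sum-to-product factorisation of $\phi_\pm$ is an equivalent (and arguably cleaner) way to see this.

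The one place where you work harder than necessary is the remainder $R(x,\theta)$. You flag as the ``main technical obstacle'' that the naive bound $\lvert N\rvert \lesssim 1/(x\lvert\sin(\theta+\omega)\rvert)$ is not integrable, and you propose a domain-splitting argument to extract $O(x^{-1/2})$. The paper sidesteps this entirely by dominated convergence: the integrand coming from $R$ is bounded pointwise by $C\lvert g_\omega(\theta)\rvert$ (using only $\lvert N\rvert \leq C$), which is integrable and independent of $x$, while $N\bigl(2x\sin(\theta+\omega)\bigr)\to 0$ for almost every $\theta$. Your splitting would also work and has the advantage of giving a rate, but it is not needed for the qualitative statement.
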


\begin{proof}
Recalling~\eqref{equation:Dell}, write
\[
D_\omega(x) = \frac{1}{e\pi}  \int_0^{\pi/2} \sqrt{2x\sin\theta} J_0(2x\sin \theta) \sqrt{2x\lvert\sin(\theta+\omega)\rvert} \besselexp\bigl(2x\sin(\theta+\omega)\bigr) 
g_\omega(\theta) d\theta,
\]
where $g_\omega(\theta) \coloneqq \frac{f_\omega(\theta) }{\sqrt{\sin(\theta)\lvert \sin(\theta+\omega)\rvert}}$.  Note that $f_\omega$ is piecewise continuous, moreover by assumption if $\sin(\theta) = 0$ then $\sin(\theta+\omega) \neq 0$ (since $\omega \not\equiv 0 \pmod \pi$) thus $\theta \mapsto \frac{1}{\sqrt{\sin(\theta)\lvert\sin(\theta+\omega)\rvert}}$ and thus $g_\omega$ is integrable on $(0,\pi/2)$.

Using~\eqref{equation:bessel0_asymptotics} and~\eqref{equation:besselexp_asymptotics} we obtain for $D_\omega(x)$ a sum of these following four integrals (with some multiplicative coefficients that do not depend on $x$):
\begin{gather*}
\int_0^{\pi/2} \cos\left(2x\sin\theta-\frac{\pi}{4}\right) \exp \ic\left(2x\sin(\theta+\omega) - \sgn\bigl(\sin(\theta+\omega)\bigr)\frac{\pi}{4}\right)  g_\omega(\theta)d\theta,
\\
\int_0^{\pi/2} \cos\left(2x\sin\theta-\frac{\pi}{4}\right) N\bigl(2x\sin(\theta+\omega)\bigr) g_\omega(\theta)d\theta,
\\
\int_0^{\pi/2} \exp \ic\left(2x\sin(\theta+\omega) - \sgn\bigl(\sin(\theta+\omega)\bigr)\frac{\pi}{4}\right) M(2x\sin\theta) g_\omega(\theta)d\theta,
\\
\int_0^{\pi/2} M(2x\sin\theta)N\bigl(2x\sin(\theta+\omega)\bigr)g_\omega(\theta)d\theta.
\end{gather*}
We can easily see that the last three integrals go to zero as $x$ grows to infinity. For instance, we have, using~\eqref{equation:ineq_N},
\begin{multline*}
\left\lvert\int_0^{\pi/2} \cos\left(2x\sin\theta-\frac{\pi}{4}\right) N\bigl(2x\sin(\theta+\omega)\bigr) g_\omega(\theta)d\theta\right\rvert
\leq
C \int_0^{\pi/2} \frac{\lvert g_\omega(\theta)\rvert}{1+2x\bigl\lvert\sin(\theta+\omega)\bigr\rvert} d\theta,
\end{multline*}
and since
\[
\frac{\lvert g_\omega(\theta)\rvert}{1+2x\bigl\lvert\sin(\theta+\omega)\bigr\rvert} \leq \bigl\lvert g_\omega(\theta)\rvert,
\]
is integrable on $(0,\pi/2)$ and
\[
\frac{\lvert g_\omega(\theta)\rvert}{1+2x\bigl\lvert\sin(\theta+\omega)\bigr\rvert} \xrightarrow{x\to+\infty} 0,
\]
for almost all $\theta \in (0,\pi/2)$ (all except maybe the only $\theta$ for which $\sin(\theta+\omega) = 0$)
we obtain the result by dominated convergence.

For the first integral, we have
\begin{align}
\int_0^{\pi/2}g_\omega(\theta) \cos&\left(2x\sin\theta-\frac{\pi}{4}\right) \exp \ic\left(2x\sin(\theta+\omega) - \sgn\bigl(\sin(\theta+\omega)\bigr)\frac{\pi}{4}\right)  d\theta
\notag
\\
&=
\frac{1}{2}
\sum_{\epsilon \in \{\pm 1\}}\int_0^{\pi/2} g_\omega(\theta) \exp \epsilon \ic\left(2x\sin\theta-\frac{\pi}{4}\right) \exp \ic\left(2x\sin(\theta+\omega) - \sgn\bigl(\sin(\theta+\omega)\bigr)\frac{\pi}{4}\right) d\theta
\notag
\\
\label{equation:application_RL}
&=
\frac{1}{2}
\sum_{\epsilon \in \{\pm 1\}}\int_0^{\pi/2} h_{\omega,\epsilon}(\theta)\exp \bigl(2\ic x \phi_\epsilon(t)\bigr) d\theta,
\end{align}
with
\[
\phi_\epsilon(t) = \epsilon \sin\theta + \sin(\theta+\omega),
\]
and
\[
h_{\omega,\epsilon}(\theta) = g_\omega(\theta)\exp \left[ \frac{-\ic\pi}{4}\bigl(\epsilon + \sgn\bigl(\sin(\theta+\omega)\bigr)\bigr)\right].
\]
The function $h_{\omega,\epsilon}$ is integrable on $(0,\pi/2)$, and we will now see that $\phi'_\epsilon$ vanishes at at most one point. We have $\phi'_\epsilon(\theta) = \epsilon \cos\theta + \cos(\theta+\omega)$. For $\epsilon = 1$ we have
\begin{align*}
\phi'_1(\theta) = 0
&\iff
\cos(\theta+\omega) = -\cos\theta
\\
&\iff
\theta+\omega \equiv \pi \pm \theta \pmod{2\pi},
\end{align*}
thus since $\omega \neq \pi$  we obtain
\begin{align*}
\phi'_1(\theta) = 0 &\iff \theta + \omega  \equiv \pi -\theta \pmod{2\pi}
\\
&\iff \theta \equiv \frac{\pi-\omega}{2} \pmod{2\pi}.
\end{align*}
Similarly, for $\epsilon = -1$ we have
\begin{align*}
\phi'_{-1}(\theta) = 0
&\iff
\cos(\theta+\omega) = \cos\theta
\\
&\iff
\theta+\omega \equiv \pm \theta \pmod{2\pi}
\end{align*}
thus since $\omega \neq 0$  we obtain
\begin{align*}
\phi'_{-1}(\theta) = 0 &\iff \theta + \omega \equiv -\theta \pmod{2\pi}
\\
&\iff \theta = \pi - \frac{\omega}{2}.
\end{align*}
We thus conclude, applying the Riemann--Lebesgue lemma (Lemma~\ref{lemma:riemann-lebesgue}), that the quantity in~\eqref{equation:application_RL} goes to zero as $x \to +\infty$ and this finishes the proof of the proposition.
\end{proof}

\subsubsection{Contributing terms}

If $\omega \in \{0,\pi\}$ then $2\omega \equiv 0 \pmod {2\pi}$ and thus, recalling~\eqref{equation:fell},
\begin{equation}
\label{equation:f_ell_theta0pi}
f_\omega(\theta) = -2\ic\sin(2\theta) \sumexp(\theta).
\end{equation}
We deduce from~\eqref{equation:Dell} and~\eqref{equation:f_ell_theta0pi} that if $\omega \in \{0,\pi\}$ then
\[
D_\omega(x) = -\frac{4\ic x}{e\pi} \int_{0}^{\pi/2} \sumexp(\theta) J_0(2x\sin \theta)\sin(2\theta) \besselexp\bigl(2x\sin(\theta+\omega)\bigr) d\theta.
\]
In particular, noting from~\eqref{equation:besselexp} that $\besselexp(-y) = \overline{\besselexp(y)}$ we have
\begin{subequations}
\label{subequations:D0_Dpi}
\begin{align}
D_0(x) &= -\frac{4\ic x}{e\pi} \int_{0}^{\pi/2} \sumexp(\theta) J_0(2x\sin \theta)\sin(2\theta) \besselexp(2x\sin\theta) d\theta,
\label{equation:D0}
\\
D_\pi(x) &= -\frac{4\ic x}{e\pi} \int_{0}^{\pi/2} \sumexp(\theta) J_0(2x\sin \theta)\sin(2\theta) \overline{\besselexp(2x\sin\theta)} d\theta.
\label{equation:Dpi}
\end{align}
\end{subequations}
Recall that we have assumed at the beginning of~\textsection\ref{subsection:asymptotics_covariance} that $i \neq j$. Recall also the notation $\omega_\ell = \frac{\ell\pi}{e}$ for $-e \leq \ell \leq e$.
\begin{proposition}
\label{proposition:omega_0_pi}
We have
\[
\lim_{x \to +\infty} D_0(x) = \lim_{x \to +\infty} D_{\pi}(x) = \limd[j-i],
\]
where
\[
\limd \coloneqq \left(\frac{2}{e\pi}\right)^2\left(\sum_{\omega \in \Omega_{2e}} \omega \exp 2\ic k \omega + 2\pi \sum_{n=1}^{\ep-1} (1-\sin\omega_n) \exp 2\ic k\omega_n \right),
\]
for $k \notin e\Z$ and $\ep \coloneqq \bigl\lceil\frac{e}{2}\bigr\rceil$.
\end{proposition}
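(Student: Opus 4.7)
The plan is to substitute the asymptotic expansions~\eqref{equation:bessel0_asymptotics} and~\eqref{equation:besselexp_asymptotics} directly into~\eqref{equation:D0} and~\eqref{equation:Dpi}. Setting $y = 2x\sin\theta$ and using the identities $\cos(y-\tfrac{\pi}{4})\exp\ic(y-\tfrac{\pi}{4}) = \tfrac12(1-\ic\exp 2\ic y)$ and its complex conjugate version, the leading terms are
\[
J_0(y)\besselexp(y) = \frac{1-\ic\exp 2\ic y}{\pi y} + R_+(y), \qquad J_0(y)\overline{\besselexp(y)} = \frac{1+\ic\exp(-2\ic y)}{\pi y} + R_-(y),
\]
where each summand of $R_\pm$ contains at least one factor of $M$ or $N$. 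The crucial simplification is that $\sin(2\theta)/y = \cos\theta/x$, so multiplying the leading piece by $x\sin(2\theta)$ cancels both the overall $x$ prefactor and the singular $1/y$, yielding for the leading contributions to $D_0$ and $D_\pi$ the common expression
\[
-\frac{4\ic}{e\pi^2}\int_0^{\pi/2}\sumexp(\theta)\cos\theta\bigl(1 \mp \ic\exp(\pm 4\ic x\sin\theta)\bigr) d\theta.
\]

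The next step is to show that the oscillatory component and the remainder integrals vanish as $x \to +\infty$. For the oscillatory part, apply Lemma~\ref{lemma:riemann-lebesgue} with $\phi(\theta) = \pm \sin\theta$ on $[0,\pi/2]$ (the derivative vanishing only at $\theta = \pi/2$), using the fact that $\sumexp\cos\theta$ is bounded by Lemma~\ref{lemma:sumexp}. For the remainder terms, each summand of $R_\pm(y)$ multiplied by $x\sin(2\theta) = y\cos\theta$ and by $\sumexp(\theta)$ is, after cancellation, dominated by a constant multiple of $\cos\theta/(1+2x\sin\theta)$, using~\eqref{equation:ineq_N} and the analogous bound on $M$. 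This upper bound is uniformly bounded in $x$ by an integrable function $C\cos\theta$ and tends pointwise to $0$ for $\theta \in (0,\pi/2)$; dominated convergence gives vanishing in the limit. Consequently
\[
\lim_{x\to+\infty}D_0(x) = \lim_{x\to+\infty}D_\pi(x) = -\frac{4\ic}{e\pi^2}\int_0^{\pi/2}\sumexp(\theta)\cos\theta\, d\theta.
\]

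It remains to evaluate the resulting integral via Lemma~\ref{lemma:sumexp} with $k \coloneqq j-i \notin e\Z$. A direct integration, splitting at $\theta = \omega$ when $0 < \omega < \tfrac{\pi}{2}$, yields
\[
\int_0^{\pi/2}\bigl(\omega - \sgnx{\theta}{\omega}\pi\bigr)\cos\theta\, d\theta = \begin{cases}\omega+\pi, & \omega \leq 0,\\ \omega + \pi - 2\pi\sin\omega, & 0 < \omega < \tfrac{\pi}{2},\\ \omega - \pi, & \omega \geq \tfrac{\pi}{2}.\end{cases}
\]
Multiplying by $\exp(2\ic k\omega)$ and summing over $\omega = \omega_\ell$ for $-e < \ell \leq e$, the $\omega$-contributions assemble into $\sum_{\omega \in \Omega_{2e}}\omega\exp(2\ic k\omega)$, while the $\pm\pi$-contributions from the ranges $\ell \leq 0$, $1 \leq \ell \leq \ep-1$, $\ep \leq \ell \leq e$ must be combined. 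Exploiting the periodicity $\exp(2\ic k\omega_{\ell+e}) = \exp(2\ic k\omega_\ell)$ together with $\sum_{\ell=1}^{e-1}\exp(2\ic k\omega_\ell) = -1$ (which follows from~\eqref{equation:sum_root_unity} since $k \notin e\Z$), the $\pi$-contributions collapse to $2\pi\sum_{n=1}^{\ep-1}\exp(2\ic k\omega_n)$, which joined with the $-2\pi\sin\omega$ term produces $2\pi\sum_{n=1}^{\ep-1}(1-\sin\omega_n)\exp(2\ic k\omega_n)$. Combining with the prefactor $-\frac{4\ic}{e\pi^2} \cdot \frac{\ic}{e}$ from Lemma~\ref{lemma:sumexp} gives $\bigl(\tfrac{2}{e\pi}\bigr)^2$ times the bracketed expression defining $\limd[j-i]$, as announced.

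The main obstacle is the combinatorial bookkeeping in the last step: correctly identifying in which of the three regions each $\omega_\ell$ lies (the cutoff occurring at $\ell = \ep = \lceil e/2\rceil$), and using the roots-of-unity identity $\sum_{\ell=1}^{e-1}\exp(2\ic k\omega_\ell) = -1$ to collapse the $\pi$-contributions from $\omega \leq 0$ and $\omega \geq \tfrac{\pi}{2}$ into a single sum over $n = 1,\dots,\ep-1$ that absorbs the $\sin\omega_n$ correction coming from the middle range.
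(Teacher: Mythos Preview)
Your proof is correct and follows essentially the same strategy as the paper: insert the asymptotics~\eqref{equation:bessel0_asymptotics} and~\eqref{equation:besselexp_asymptotics}, use the cancellation $x\sin(2\theta)/y = \cos\theta$ to kill both the growing prefactor and the $1/y$ singularity, dispose of the oscillatory piece by Riemann--Lebesgue and the remainder by dominated convergence, and then evaluate $\int_0^{\pi/2}\sumexp(\theta)\cos\theta\,d\theta$ via Lemma~\ref{lemma:sumexp}. The only organisational difference is in this last evaluation: the paper first splits $[0,\tfrac{\pi}{2}]$ into the subintervals on which $\sumexp$ is constant and then Abel-sums the resulting telescoping expression (equations~\eqref{equation:sum_valsumexp_diff_sin}--\eqref{equation:el-1-el}), whereas you substitute the explicit formula for $\sumexp$ and integrate term-by-term over $\omega\in\Omega_{2e}$, collapsing the $\pm\pi$ contributions with the root-of-unity identity $\sum_{\ell=1}^{e}\exp(2\ic k\omega_\ell)=0$; the two bookkeepings are equivalent.
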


\begin{proof}
By~\eqref{equation:bessel0_asymptotics} and~\eqref{equation:besselexp_asymptotics}, for $t \geq 0$ we have
\begin{align}
tJ_0(t) \besselexp(t) &= \frac{2}{\pi} \cos\left(t - \frac{\pi}{4}\right)\exp \ic\left(t - \frac{\pi}{4}\right) + R(t)
\notag
\\
&=
\frac{1}{\pi} \left[2\cos^2\left(t - \frac{\pi}{4}\right) + \ic \sin \left(2t - \frac{\pi}{2}\right)\right] + R(t)
\notag
\\
&=
\frac{1}{\pi} \left[\cos\left(2t - \frac{\pi}{2}\right) + 1 + \ic\sin \left(2t - \frac{\pi}{2}\right)\right] + R(t)
\notag
\\
\label{equation:tJF}
&=
\frac{1}{\pi} + \frac{1}{\pi}\bigl(\sin(2t) - \ic\cos(2t)\bigr) + R(t),
\end{align}
where $R$ is continuous on $\R_+$ and satisfies $R(t)= O\bigl(\frac{1}{t}\bigr)$ when $t \to +\infty$. In particular, there exists $C \geq 0$ such that $\lvert R(t)\rvert \leq \frac{C}{1+t}$ for all $t \geq 0$.

Recall from Lemma~\ref{lemma:sumexp} that $\sumexp$ is constant on each interval $\bigl(\omega_\ell,\omega_{\ell+1}\bigr)$ for each $-e \leq \ell < e$. From~\eqref{equation:D0} and~\eqref{equation:tJF} we have, for any $0 \leq a < b \leq \frac{\pi}{2}$,
\begin{align}
\frac{4x}{e\pi}\int_a^b \sin(2\theta) J_0(2x\sin\theta) &\besselexp(2x\sin\theta)d\theta
\notag
\\
&=
\frac{4}{e\pi}\int_a^b  2x\sin(\theta)J_0(2x\sin\theta)\besselexp(2x\sin\theta) \cos(\theta)d\theta
\notag
\\
&=
\frac{2}{e\pi}\int_{2\sin a}^{2\sin b} xu J_0(xu) \besselexp(xu) du
\notag
\\
\label{equation:Well_transformed}
&=
\frac{2}{e\pi^2}\int_{2\sin a}^{2\sin b} 1 + \sin(2xu) - \ic\cos(2xu) + \pi R(xu) du,
\end{align}
where we did the variable change $u = 2\sin\theta$. Moreover, as $x \to +\infty$ we have
\[
\int_{2\sin a}^{2\sin b} \sin(2xu) - \ic \cos(2xu) du
= o(1),
\]
by the Riemann--Lebesgue lemma, and
\begin{align*}
\left\lvert \int_{2\sin a}^{2\sin b}  R(xu) du\right\rvert
&\leq 
\int_{2\sin a}^{2\sin b}  \frac{C}{1+xu}du
\\
&=
\frac{C}{x}\left[\ln(1+xu)\right]_{2\sin a}^{2\sin b} 
\\
&=
o(1),
\end{align*}
so that~\eqref{equation:Well_transformed} gives
\begin{equation}
\label{equation:D0_equivalent}
\frac{4x}{e\pi}\int_{a}^{b} \sin(2\theta) J_0(2x\sin\theta) \besselexp(2x\sin\theta)d\theta = \frac{4}{e\pi^2}(\sin a - \sin b) + o(1),
\end{equation}
as $x \to +\infty$.

Now define $\ep \coloneqq \bigl\lceil\frac{e}{2}\bigr\rceil$ so that $\omega_{\ep} \geq \pi/2 > \omega_{\ep-1}$.
Define $\homega_\ell$ for $\ell \in \{0,\dots,\ep\}$ by
\[
\homega_\ell \coloneqq \min\left(\omega_\ell,\frac{\pi}{2}\right) = \begin{cases}
\omega_\ell, &\text{if } \ell < \ep,
\\
\frac{\pi}{2},&\text{if } \ell = \ep.
\end{cases}
\]
We deduce from~\eqref{equation:Jnx_R}, \eqref{subequations:D0_Dpi} and~\eqref{equation:D0_equivalent} that,   denoting by $\valsumexp$ the value of $\sumexp$ on the interval $(\omega_\ell,\omega_{\ell+1})$,
\begin{equation}
\label{equation:D0_o(1)}
D_0(x)
= \frac{-4\ic}{e\pi^2} \sum_{\ell=0}^{\ep-1} \valsumexp \left(\sin\hdek[\ell+1] - \sin\hdek\right) + o(1)
= D_\pi(x),
\end{equation}
as $x \to +\infty$. Now we have, noting that $\sin\homega_{e'} = 1$ and $\sin\homega_0 = 0$,
\begin{align}
\sum_{\ell=0}^{\ep-1} \valsumexp\bigl(\sin\homega_{\ell+1}-\sin\homega_\ell\bigr)
&=
\sum_{\ell = 1}^{\ep} \valsumexp[\ell-1]\sin\homega_\ell - \sum_{\ell=0}^{\ep-1} \valsumexp\sin\homega_\ell
\notag
\\
&=
\valsumexp[\ep-1]\sin\homega_{\ep} - \valsumexp[0]\sin\homega_0 + \sum_{\ell=1}^{\ep-1} (\valsumexp[\ell-1]-\valsumexp[\ell])\sin\homega_\ell
\notag
\\
\label{equation:sum_valsumexp_diff_sin}
&=
\valsumexp[\ep-1] + 
\sum_{\ell=1}^{\ep-1} (\valsumexp[\ell-1]-\valsumexp[\ell])\sin\omega_\ell.
\end{align}
By Lemma~\ref{lemma:sumexp} we have, for any $x \in (\omega_\ell,\omega_{\ell+1})$ and with $k \coloneqq j-i \pmod e$ (thus $k \notin e\Z$),
\begin{align}
\valsumexp &= \frac{\ic}{e}\sum_{\omega \in \Omega_{2e}} \exp(2\ic k\omega)\bigl(\omega-\sgn(\omega-x)\pi\bigr)
\notag
\\
&=
\frac{\ic}{e}\sum_{\omega \in \Omega_{2e}} \omega \exp 2\ic k \omega + 
\frac{\ic\pi}{e}\sum_{n = -e+1}^{\ell} \exp 2\ic k\omega_n- \frac{\ic\pi}{e}\sum_{n = \ell+1}^e \exp 2\ic k\omega_n
\notag
\\
&=
\frac{\ic}{e}\sum_{\omega \in \Omega_{2e}} \omega \exp 2\ic k \omega - \frac{2\ic\pi}{e}\sum_{n = \ell+1}^e \exp 2\ic k\omega_n
\notag
\\
\label{equation:valsumexp_step}
&=
\frac{\ic}{e}\sum_{\omega \in \Omega_{2e}} \omega \exp 2\ic k \omega + \frac{2\ic\pi}{e}\sum_{n = 1}^\ell \exp 2\ic k\omega_n
\end{align}
since $\sum_{\omega \in \Omega_{2e}} \exp 2\ic k\omega = 0 = \sum_{n = 1}^e \exp 2\ic k\omega_n$ since $2k \notin 2e\Z$, thus
\begin{align}
\valsumexp[\ell-1] - \valsumexp
&=
\frac{2\ic\pi}{e}\sum_{n = 1}^{\ell-1} \exp 2\ic k\omega_n
- \frac{2\ic\pi}{e}\sum_{n = 1}^\ell \exp 2\ic k\omega_n
\notag
\\
\label{equation:el-1-el}
&=
\frac{-2\ic\pi}{e}\exp(2\ic k\omega_\ell).
\end{align}
Gathering~\eqref{equation:D0_o(1)}, \eqref{equation:sum_valsumexp_diff_sin}, \eqref{equation:valsumexp_step} and~\eqref{equation:el-1-el} gives the desired result.
\end{proof}

\subsubsection{Making the limit explicit}

We are now interested in a closed form for the elements $\Re \limd$, where
\[
\limd = \left(\frac{2}{e\pi}\right)^2\left(\sum_{\omega \in \Omega_{2e}} \omega \exp 2\ic k \omega + 2\pi \sum_{n=1}^{\ep-1} (1-\sin\omega_n) \exp 2\ic k\omega_n \right),
\]
are the elements of Proposition~\ref{proposition:omega_0_pi}, where $k \notin e\Z$ and $\ep = \bigl\lceil\frac{e}{2}\bigr\rceil$.

\begin{lemma}
\label{lemma:Re_limd}
For any $k \notin e\Z$ we have
\[
\Re\limd = \frac{2}{\pi e^2} \left[\cot\bigl(k-\half\bigr)\tfrac{\pi}{e} - \cot\bigl(k+\half\bigr)\tfrac{\pi}{e}\right].
\]
In particular, we have $\Re\limd \neq 0$.
\end{lemma}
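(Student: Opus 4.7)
My plan is to take real parts of the defining formula for $\limd$ and evaluate each piece via discrete Fourier/geometric series identities. Since $\Re\exp(2\ic k\omega) = \cos(2k\omega)$, the first sum $\sum_{\omega \in \Omega_{2e}}\omega\cos(2k\omega)$ collapses to $\pi$: pairing $\omega_\ell$ with $\omega_{-\ell}$ for $\ell \in \{1,\dots,e-1\}$ cancels out (antisymmetry of $\omega \mapsto \omega$ against evenness of $\cos(2k\omega)$), the $\ell = 0$ term vanishes, and only $\omega_e = \pi$ contributes $\pi\cos(2\pi k) = \pi$. This reduces the claim to the real identity
\begin{equation*}
1 + 2\sum_{n=1}^{\ep-1}\bigl(1-\sin(n\pi/e)\bigr)\cos(2\pi kn/e) = \tfrac{1}{2}\bigl[\cot(k-\half)\tfrac{\pi}{e} - \cot(k+\half)\tfrac{\pi}{e}\bigr].
\end{equation*}

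Split the sum as $T_1 - T_2$ with $T_1 = \sum_n\cos(2\pi kn/e)$ and $T_2 = \sum_n\sin(n\pi/e)\cos(2\pi kn/e)$. Using the symmetries $\cos(2\pi k(e-n)/e) = \cos(2\pi kn/e)$ and $\sin((e-n)\pi/e) = \sin(n\pi/e)$, I extend both sums to the range $n \in \{1,\dots,e-1\}$, separately handling the middle term $n = e/2$ when $e$ is even. The extended $T_1$ is evaluated via $\sum_{n=0}^{e-1}\exp(2\pi\ic kn/e) = 0$ (valid since $k \notin e\Z$), giving $2T_1 = -1$ when $e$ is odd and $2T_1 = -1 - (-1)^k$ when $e$ is even.

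For $T_2$, set $q = \exp(2\pi\ic k/e)$ and write $2\ic\sin(n\pi/e) = \exp(\ic n\pi/e) - \exp(-\ic n\pi/e)$. Then $\sum_{n=0}^{e-1} q^n\sin(n\pi/e)$ becomes a difference of two geometric series $\sum_{n=0}^{e-1}\bigl(q\exp(\pm\ic\pi/e)\bigr)^n$. Since $\bigl(q\exp(\pm\ic\pi/e)\bigr)^e = q^e\exp(\pm\ic\pi) = -1$, each series equals $\frac{2}{1-\exp\bigl((2k\pm 1)\ic\pi/e\bigr)}$. Applying the identity $\frac{1}{1-\exp(\ic\theta)} = \frac{1}{2} + \frac{\ic}{2}\cot(\theta/2)$, the difference collapses to the purely real quantity $\frac{1}{2}\bigl[\cot(k+\half)\tfrac{\pi}{e} - \cot(k-\half)\tfrac{\pi}{e}\bigr]$. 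Relating this back to $T_2$ (with a $(-1)^k$ correction when $e$ is even arising from the middle term $n=e/2$), the parity-dependent $(-1)^k$ contributions in $T_1$ and $T_2$ cancel when forming $1 + 2(T_1 - T_2)$, while the ``$1+$'' absorbs the $-1$ from $T_1$, producing exactly the right-hand side.

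The ``in particular'' statement follows by rewriting the cotangent difference via sum-to-product as $\frac{2\sin(\pi/e)}{\cos(\pi/e) - \cos(2\pi k/e)}$: the numerator is nonzero for $e \geq 2$, and the denominator cannot vanish since $\cos(\pi/e) = \cos(2\pi k/e)$ would force $2k \equiv \pm 1 \pmod{2e}$, contradicting parity. The main obstacle in the execution is the bookkeeping of the even-versus-odd cases (the upper limit $\ep - 1$ and the middle term $n = e/2$ behave differently), but this is expected to resolve cleanly because the $(-1)^k$ corrections in $T_1$ and $T_2$ are structured to cancel exactly when combined.
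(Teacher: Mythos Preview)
Your proof is correct and follows essentially the same approach as the paper: compute $\Re\sum_\omega\omega\exp(2\ic k\omega)=\pi$ by pairing $\omega$ with $-\omega$, then extend the second sum to $n\in\{1,\dots,e-1\}$ via the symmetry $\omega_{e-n}=\pi-\omega_n$ and evaluate the resulting trigonometric sums. The only minor differences are computational: the paper keeps the factor $(1-\sin\omega_n)$ intact when extending (so the middle term at $n=e/2$ vanishes outright since $\sin\omega_{e/2}=1$, avoiding your $(-1)^k$ bookkeeping in $T_1$ and $T_2$ separately), evaluates the $\sin\omega_n\cos 2k\omega_n$ sum via product-to-sum and the closed form for $\sum_{n=1}^{e-1}\sin nx$ rather than your geometric-series route, and deduces nonvanishing from the bijectivity of $\cot$ on $(0,\pi)$ rather than your sum-to-product parity argument.
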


\begin{proof}
First, we have
\begin{align*}
\Re \sum_{\omega \in \Omega_{2e}} \omega \exp(2\ic k\omega)
&=
\sum_{\omega\in\Omega_{2e}} \omega \cos 2 k\omega
\\
&=
\pi,
\end{align*}
noticing that if $\omega \in \Omega_{2e} \setminus\{\pi\}$ then $-\omega \in \Omega_{2e}$. We now have to deal with the sum
\[
A \coloneqq \Re\left(2\sum_{n=1}^{\ep-1} (1-\sin\omega_n) \exp 2\ic k\omega_n\right) = 2\sum_{n=1}^{\ep-1}(1-\sin\omega_n)\cos 2 k\omega_n.
\]
Recall that $\omega_n = \frac{\pi n}{e}$. For any $n \in \{1,\dots,e-1\}$ we  thus have $\omega_{e-n} = \pi-\omega_n$, hence $\sin\omega_{e-n} = \sin\omega_n$ and $\cos2 k\omega_{e-n} = \cos(2k\pi - 2k\omega_n)= \cos 2k\omega_n$. Hence, we have
\begin{align*}
A &= \sum_{n = 1}^{e-1} (1-\sin\omega_n)\cos 2k\omega_n - \begin{cases}
\bigl(1-\sin\omega_{e/2}\bigr)\cos 2k\omega_{e/2}, &\text{if } n \text{ is even},
\\
0, &\text{if } n \text{ is odd},
\end{cases}
\\
&=
\sum_{n = 1}^{e-1} (1-\sin\omega_n)\cos 2k\omega_n,
\end{align*}
since $\sin\omega_{e/2} = \sin\frac{\pi}{2} = 1$ if $e$ is even. Since $\sum_{n = 1}^{e-1} \cos 2k\omega_n = \sum_{n = 1}^{e-1} \cos \bigl(\frac{2k\pi}{e}n\bigr) = -1$ since $2k\notin 2e\Z$, we obtain that, using the identity $\sin(x)\cos(y) = \frac{1}{2}\bigl[\sin(x+y) + \sin(x-y)\bigr]$,
\begin{align*}
A &=-1 -\sum_{n = 1}^{e-1} \sin \omega_n \cos 2k\omega_n
\\
&=
-1+\frac{1}{2}\sum_{n = 1}^{e-1} \bigl[\sin(2k-1)\omega_n - \sin(2k+1)\omega_n\bigr].
\end{align*}
We now recall the following standard formula (see, for instance, \cite[1.342.1]{grad}):
\[
\sum_{n = 1}^{e-1} \sin nx = \sin \frac{ex}{2}\sin \frac{(e-1)x}{2}\cosec\frac{x}{2},
\]
valid for any $x \notin 2\pi\Z$. We obtain, using the identity $\sin\left(a \pm \frac{\pi}{2}\right) = \pm \cos a$,
\begin{align*}
\sum_{n = 1}^{e-1} \sin (2k\pm 1)\omega_n
&=
\sum_{n=1}^{e-1}\sin \frac{(2k\pm 1)\pi n}{e}
\\
&=
\sin \left[\left(k\pm\half\right)\pi\right] \sin \left[(1-e^{-1})\left(k\pm\half\right)\pi\right]\cosec\left[e^{-1}\left(k\pm \half\right)\pi\right]
\\
&=
\pm (-1)^k \sin \left[(1-e^{-1})\left(k\pm\half\right)\pi\right]\cosec\left[e^{-1}\left(k\pm \half\right)\pi\right]
\\
&=
\pm (-1)^k \sin \left[\left((1-e^{-1})k \mp \half[e^{-1}] \pm \half\right)\pi\right]\cosec\left[e^{-1}\left(k\pm \half\right)\pi\right]
\\
&=
(-1)^k \cos \left[\left((1-e^{-1})k \mp \half[e^{-1}]\right)\pi\right]\cosec\left[e^{-1}\left(k\pm \half\right)\pi\right]
\\
&=
\cos \left[\left(-e^{-1}k \mp \half[e^{-1}]\right)\pi\right]\cosec\left[e^{-1}\left(k\pm \half\right)\pi\right]
\\
&=
\cos \left[e^{-1}\left(k \pm \half\right)\pi\right]\cosec\left[e^{-1}\left(k\pm \half\right)\pi\right]
\\
&=
\cot\left[e^{-1}\left(k \pm \half\right)\pi\right].
\end{align*}
and thus
\[
A =-1+ \frac{1}{2}\left(\cot\left[e^{-1}\left(k-\half\right)\pi\right] - \cot\left[e^{-1}\left(k+\half\right)\pi\right]\right).
\]
Finally, we obtain
\begin{align*}
\Re \limd &= \left(\frac{2}{e\pi}\right)^2\bigl(\pi + \pi A\bigr)
\\
&=
\frac{4}{\pi e^2}(1+A),
\end{align*}
as desired. Note that $\Re\limd \neq 0$ since the cotangent function is bijective on $(0,\pi)$ (taking $1 \leq k \leq e-1$).
\end{proof}

 We conclude this part by putting~\eqref{equation:C'_ReD}, \eqref{equation:D_sum_Dell}, Proposition~\ref{proposition:Dell_O(1)}, Proposition~\ref{proposition:omega_0_pi} and Lemma~\ref{lemma:Re_limd} to find that
\[
\lim_{x \to +\infty} C'_{i,j}(x) = 2\Re\limd[j-i] = \frac{4}{\pi e^2}\left[\cot(j-i-\half)\tfrac{\pi}{e} - \cot(j-i+\half)\tfrac{\pi}{e}\right],
\]
in particular, the limit depends only on $j-i$. Since $\Re\limd[j-i] \neq 0$ we find that
\[
C_{i,j}(x) \sim 2\Re\limd[j-i] x,
\]
as $x \to +\infty$. By~\eqref{equation:kernelJ_kJ}, \eqref{equation:def_Cij(x)} and Proposition~\ref{proposition:covariance_sum_kernel}, we have, with $i \neq j$,
\[
\Covt\bigl(x_i(\lambda),x_j(\lambda)\bigr) = \frac{-1}{4}C_{i,j}(2\sqrt t),
\]
thus $\Covt\bigl(x_i(\lambda),x_j(\lambda)\bigr) \sim 
-\Re\limd[j-i]\sqrt t$. We thus deduce from Lemma~\ref{lemma:Re_limd} the final result of this part.

\begin{theorem}
\label{theorem:limit_covt}
If $i \neq j$ then as $t \to +\infty$ we have, under the Poissonised Plancherel measure $\plt$,
\[
\Covt\bigl(x_i(\lambda),x_j(\lambda)\bigr) \sim \frac{2\sqrt t}{\pi e^2} \left[\cot\bigl(j-i+\half\bigr)\tfrac{\pi}{e} - \cot\bigl(j-i-\half\bigr)\tfrac{\pi}{e}\right].
\]
\end{theorem}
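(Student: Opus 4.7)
The starting point is Proposition~\ref{proposition:covariance_sum_kernel} together with the rescaling $\kernelJ(m,n) = \tfrac12\kJ(m,n)(2\sqrt t)$ from~\eqref{equation:kernelJ_kJ}, which reduces the problem to computing the asymptotics as $x \to +\infty$ of
$$C_{i,j}(x) \coloneqq \sum_{m \in e\Z+i}\sum_{n\in e\Z+j}\kJ(m,n)^2(x).$$
My strategy is to differentiate $C_{i,j}$ rather than work with it directly: from $\kJ(m,n) = 2\sum_{s\geq 1}J_{m+s}J_{n+s}$ (Lemma~\ref{lemma:kernelJ_sum}) and the recurrence~\eqref{equation:Jn_ED} one derives
$$\frac{d}{dx}\kJ(m,n)^2 = 2x\,\frac{J_m^2 J_{n+1}^2 - J_{m+1}^2 J_n^2}{m-n},$$
and the pointwise bound~\eqref{equation:upper_bound_Jn} both legitimises termwise differentiation and ensures absolute convergence. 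Since $\kJ(m,n)(0)=0$, proving $C'_{i,j}(x)\to\alpha$ will give $C_{i,j}(x)\sim\alpha x$; evaluating at $x=2\sqrt t$ with the $-\tfrac14$ prefactor from Proposition~\ref{proposition:covariance_sum_kernel} will then produce the claimed equivalent.

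To reach a tractable form I would substitute the Fourier-type identity~\eqref{equation:Jn2_with_J0}, $J_m(x)^2 = \frac{2}{\pi}\int_0^{\pi/2} J_0(2x\sin\theta)\cos 2m\theta\,d\theta$, and for the terms carrying a factor $J_{m+1}^2$ shift $n \mapsto m+n$ so the denominator becomes $n$. Converting cosines to complex exponentials, the inner sum collapses to $\sumexp(\theta) = \sum_{n \in e\Z+(j-i)} e^{2\ic n\theta}/n$, which Lemma~\ref{lemma:sumexp} identifies as a piecewise constant function with jumps at $\omega_\ell = \ell\pi/e$. A second use of~\eqref{equation:Jn2} for the remaining factor $J_m^2$ introduces the Struve-type kernel $\besselexp(y)=\tfrac{2}{\pi}\int_0^{\pi/2}e^{\ic y\cos\eta}\,d\eta$, and the outer sum $\sum_{m\in e\Z+i}e^{2\ic m\theta}J_m^2$ can be evaluated in closed form by Lemma~\ref{lemma:sum_Jek+s}, collapsing into a finite combination indexed by $\omega\in\Omega_{2e}$. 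The outcome is a decomposition $C'_{i,j}(x) = \Re D(x)$ with $D(x)=\sum_{\omega\in\Omega_{2e}}D_\omega(x)$, where each $D_\omega$ is an oscillatory integral in $\theta\in[0,\pi/2]$ of $J_0(2x\sin\theta)\,\sumexp(\theta)\,\besselexp\bigl(2x\sin(\theta+\omega)\bigr)$ times a simple trigonometric factor.

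The asymptotic analysis of each $D_\omega$ then splits into two cases. For $\omega\notin\{0,\pi\}$, the large-argument expansions $J_0(y)\sim\sqrt{2/(\pi|y|)}\cos(y-\pi/4)$ and its analogue for $\besselexp$ decompose $D_\omega(x)$ into finitely many oscillatory integrals with phases $\pm\sin\theta+\sin(\theta+\omega)$; since $\omega\neq 0,\pi$, these phases have only isolated critical points on $[0,\pi/2]$, and the Riemann--Lebesgue lemma forces $D_\omega(x)\to 0$. For $\omega\in\{0,\pi\}$ the two oscillating arguments coincide up to sign, and the expansion
$$yJ_0(y)\besselexp(y) = \tfrac{1}{\pi} + \tfrac{1}{\pi}\bigl(\sin 2y-\ic\cos 2y\bigr) + O(1/y)$$
contains a non-vanishing constant term $1/\pi$ whose contribution survives integration against the piecewise constant $\sumexp$ (the oscillatory pieces again vanishing by Riemann--Lebesgue). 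After a change of variables $u=2\sin\theta$ and summation by parts over the jumps of $\sumexp$, this produces an explicit limit $\limd[j-i]$, and hence $C'_{i,j}(x)\to 2\Re\limd[j-i]$.

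It remains to simplify $\Re\limd[j-i]$ into the cotangent difference appearing in the statement. This reduces to evaluating $\sum_{n=1}^{e-1}\sin(2k\pm 1)\omega_n$ with $k=j-i$, for which the classical identity $\sum_{n=1}^{e-1}\sin nx = \sin(ex/2)\sin((e-1)x/2)\cosec(x/2)$ together with $\sin(a\pm\pi/2)=\pm\cos a$ rearranges the expression into the advertised $\cot\bigl((j-i\pm\tfrac12)\pi/e\bigr)$ form. The main obstacle I foresee is twofold: legitimising the interchange of sum and $\theta$-integral for the \emph{conditionally} convergent series $\sumexp(\theta)$---an Abel summation together with a bound of the shape $|\sumpartexp_n(\theta)|\lesssim\sqrt{n/|\sin e\theta|}$ and integrability of $|\sin e\theta|^{-1/2}$ near its zeros should do the job---and carefully controlling the stationary-phase analysis at $\omega\in\{0,\pi\}$ so that, after the oscillatory cancellations, the correct constant survives.
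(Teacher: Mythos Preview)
Your proposal is correct and follows essentially the same route as the paper: differentiate $C_{i,j}$, express $C'_{i,j}$ as the real part of a sum $\sum_{\omega\in\Omega_{2e}}D_\omega$ via the integral representations~\eqref{equation:Jn2_with_J0}, \eqref{equation:Jn2} and Lemma~\ref{lemma:sum_Jek+s}, kill the terms with $\omega\notin\{0,\pi\}$ by Riemann--Lebesgue, extract the constant contribution from $yJ_0(y)\besselexp(y)$ when $\omega\in\{0,\pi\}$, and simplify via the finite sine sum identity. You have also correctly anticipated the two genuine technical points---the Abel-summation bound $|\sumpartexp_n(\theta)|\lesssim\sqrt{n/|\sin e\theta|}$ needed to justify the sum--integral interchange for the conditionally convergent $\sumexp$, and the care required at $\omega\in\{0,\pi\}$---which are exactly the places where the paper spends its effort.
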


\subsection{Asymptotics of the variance}
\label{subsection:asymptotics_variance}

Recall from~\eqref{equation:sum_xi_0} that $\sum_i x_i(\lambda) = 0$. Hence, we directly obtain that
\begin{equation}
\label{equation:var_sum_covar}
\Vart x_i(\lambda) = \Covt\bigl(x_i(\lambda),x_i(\lambda)\bigr) = - \sum_{j \neq i} \Covt\bigl(x_i(\lambda),x_j(\lambda)\bigr).
\end{equation}

\begin{corollary}
\label{corollary:asymptotics_vart}
As $t \to +\infty$ we have, under $\plt$,
\[
\Vart x_i(\lambda) \sim \frac{4\sqrt t}{\pi e^2 }\cot\frac{\pi}{2e}.
\]
\end{corollary}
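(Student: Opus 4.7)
The plan is to combine the identity~\eqref{equation:var_sum_covar}, which expresses $\Vart x_i(\lambda)$ as the negative of a finite sum of covariances $\Covt\bigl(x_i(\lambda),x_j(\lambda)\bigr)$ over $j \in \Ze$ with $j \neq i$, with the first-order asymptotics of these covariances obtained in Theorem~\ref{theorem:limit_covt}. Since the sum has only $e-1$ terms and each covariance admits an explicit equivalent of order $\sqrt t$ with nonzero coefficient depending only on $j-i$, summing preserves the equivalent: it is enough to compute the sum of the coefficients and verify it is nonzero.

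Concretely, set $k \coloneqq j-i \in \Ze \setminus \{0\}$, which as $j$ ranges over $\Ze \setminus \{i\}$ takes each value in $\{1,\dots,e-1\}$ exactly once. Since the cotangent function is $\pi$-periodic, the right-hand side of Theorem~\ref{theorem:limit_covt} depends only on the class of $j-i$ modulo $e$, so we may take $k \in \{1,\dots,e-1\}$. Substituting the asymptotics of Theorem~\ref{theorem:limit_covt} into~\eqref{equation:var_sum_covar} yields
\[
\Vart x_i(\lambda) \sim \frac{2\sqrt t}{\pi e^2}\sum_{k = 1}^{e-1}\left[\cot\bigl(k-\half\bigr)\tfrac{\pi}{e} - \cot\bigl(k+\half\bigr)\tfrac{\pi}{e}\right].
\]

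The sum telescopes and reduces to
\[
\sum_{k = 1}^{e-1}\left[\cot\bigl(k-\half\bigr)\tfrac{\pi}{e} - \cot\bigl(k+\half\bigr)\tfrac{\pi}{e}\right] = \cot\tfrac{\pi}{2e} - \cot\tfrac{(2e-1)\pi}{2e}.
\]
Using the identity $\cot(\pi - u) = -\cot(u)$ with $u = \frac{\pi}{2e}$, the second term equals $-\cot\tfrac{\pi}{2e}$, so the whole sum is $2\cot\tfrac{\pi}{2e}$, which is in particular nonzero. Plugging this back gives the announced asymptotic $\Vart x_i(\lambda) \sim \frac{4\sqrt t}{\pi e^2}\cot\tfrac{\pi}{2e}$.

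There is essentially no obstacle: the only point requiring a brief verification is that the $\sim$ relations can be added, which is legitimate here because the sum is finite and the resulting coefficient is nonzero (indeed strictly positive, since $\cot\tfrac{\pi}{2e} > 0$ for $e \geq 2$). All the analytic work has already been carried out in Theorem~\ref{theorem:limit_covt}; the present corollary is a purely trigonometric consequence.
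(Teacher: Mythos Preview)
Your proof is correct and follows essentially the same route as the paper: combine~\eqref{equation:var_sum_covar} with Theorem~\ref{theorem:limit_covt}, reindex by $k=j-i\in\{1,\dots,e-1\}$, telescope the cotangent sum, and use $\cot(\pi-u)=-\cot u$ to obtain $2\cot\tfrac{\pi}{2e}$. Your explicit remark that summing finitely many $\sim$-relations is legitimate because the resulting coefficient is nonzero is a welcome clarification that the paper leaves implicit.
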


\begin{proof}
By Theorem~\ref{theorem:limit_covt} and~\eqref{equation:var_sum_covar}, it suffices to prove that $\sum_{k = 1}^{e-1} \Re\limd = \frac{4}{\pi e^2}\cot\frac{\pi}{2e}$. Recalling  from Lemma~\ref{lemma:Re_limd} that
\[
\limd = \frac{2}{\pi e^2} \left[\cot\bigl(k-\half\bigr)\tfrac{\pi}{e} - \cot\bigl(k+\half\bigr)\tfrac{\pi}{e}\right],
\]
we obtain, using the fact that $\cot(e-\half)\frac{\pi}{e} = \cot(\pi - \frac{\pi}{2e}) = -\cot\frac{\pi}{2e}$,
\begin{align*}
\sum_{k = 1}^{e-1} \limd &= \frac{2}{\pi e^2}\left[\cot\frac{\pi}{2e} - \cot(e - \half)\frac{\pi}{e}\right]
\\
&=\frac{4}{\pi e}\cot\frac{\pi}{2e},
\end{align*}
as desired. Note that $\cot\frac{\pi}{2e} \neq 0$ indeed since $e \geq 2$.
\end{proof}

\begin{remark}
\label{remark:cov_i=j}
 In particular, Theorem~\ref{theorem:limit_covt} remains valid in the case $i = j$,  and the limit of $t^{-1/2} \Vart \myxi(\lambda)$ as $t \to +\infty$ does not depends on $i$.
 \end{remark}
 
With Proposition~\ref{proposition:Et_bounded} and Remark~\ref{remark:cov_i=j}, we thus have proved Theorem~\ref{theoremet:et_covt} of the introduction.

\section{Size of the core of a random partition}
\label{section:size_core}

We will now use the results of the previous sections to determine the limit law of $|\overline{\lambda}|$ under $\plt$ as $t \to +\infty$.

\subsection{Limit expectation}

Write $\myvart \coloneqq \frac{4\sqrt t}{\pi e^2 }\cot\frac{\pi}{2e}$. From~\eqref{equation:size_core} we have
\[
\frac{|\overline{\lambda}|}{\myvart} = \frac{e}{2}\sum_{i \in \Ze} \frac{x_i(\lambda)^2}{\myvart} + \sum_{i = 0}^{e-1} i \frac{x_i(\lambda)}{\myvart}.
\]
From Proposition~\ref{proposition:Et_bounded} and Corollary~\ref{corollary:asymptotics_vart}, we obtain that
\[
\lim_{t \to +\infty} \Et\frac{|\overline{\lambda}|}{\myvart} = \frac{e^2}{2},
\]
and thus the following result.

\begin{proposition}
\label{proposition:limit_Et_core}
Under the Poissonised Plancherel measure $\plt$, as $t \to +\infty$ we have
\[
\Et|\overline{\lambda}| \sim \frac{2\sqrt t}{\pi}\cot\frac{\pi}{2e}.
\]
\end{proposition}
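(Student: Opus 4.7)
The plan is to take expectations in the identity~\eqref{equation:size_core} applied to~$\lambda$ chosen under~$\plt$, and then exploit the two ingredients already assembled in the paper: Proposition~\ref{proposition:Et_bounded}, which guarantees that $\Et \myxi(\lambda) = O(1)$ for each $i$, and Corollary~\ref{corollary:asymptotics_vart}, which yields $\Vart \myxi(\lambda) \sim \myvart$ as $t\to+\infty$, where $\myvart = \frac{4\sqrt t}{\pi e^2}\cot\frac{\pi}{2e}$.

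Dividing~\eqref{equation:size_core} by $\myvart$ gives
\[
\frac{|\overline{\lambda}|}{\myvart} = \frac{e}{2}\sum_{i \in \Ze} \frac{\myxi(\lambda)^2}{\myvart} + \sum_{i = 0}^{e-1} i\, \frac{\myxi(\lambda)}{\myvart}.
\]
I would handle the two sums separately in expectation. For the linear sum: since $\Et\myxi(\lambda) = O(1)$ by Proposition~\ref{proposition:Et_bounded} and $\myvart \to +\infty$ of order $\sqrt t$, each term $i\,\Et\myxi(\lambda)/\myvart$ tends to $0$, so this sum contributes $o(1)$. For the quadratic sum, I would write $\Et \myxi(\lambda)^2 = \Vart \myxi(\lambda) + \bigl(\Et \myxi(\lambda)\bigr)^2$; the first summand is equivalent to~$\myvart$ by Corollary~\ref{corollary:asymptotics_vart}, while the second is~$O(1) = o(\myvart)$ by Proposition~\ref{proposition:Et_bounded} again. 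Hence $\Et\myxi(\lambda)^2 / \myvart \to 1$ for every $i \in \Ze$, and summing over the $e$ residue classes gives
\[
\Et \frac{|\overline{\lambda}|}{\myvart} \xrightarrow[t\to+\infty]{} \frac{e}{2}\cdot e = \frac{e^2}{2}.
\]

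Multiplying back by $\myvart = \frac{4\sqrt t}{\pi e^2}\cot\frac{\pi}{2e}$ yields
\[
\Et|\overline{\lambda}| \sim \frac{e^2}{2}\cdot\frac{4\sqrt t}{\pi e^2}\cot\frac{\pi}{2e} = \frac{2\sqrt t}{\pi}\cot\frac{\pi}{2e},
\]
which is the announced equivalent. There is really no hard step here: the proposition is a direct algebraic consequence of the two quantitative estimates from Section~\ref{section:covariance}. The only point to be slightly careful about is that one must take expectations \emph{before} passing to the limit (so that the linear term is killed by the $O(1)$ bound on $\Et\myxi(\lambda)$ rather than by an unjustified cancellation), and that combining $\Vart \myxi(\lambda) \sim \myvart$ with a bounded mean is exactly what is needed to upgrade to an equivalent for $\Et\myxi(\lambda)^2$.
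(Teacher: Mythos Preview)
Your proof is correct and follows exactly the same route as the paper: divide~\eqref{equation:size_core} by $\myvart$, use Proposition~\ref{proposition:Et_bounded} to kill the linear part and Corollary~\ref{corollary:asymptotics_vart} (together with $\Et\myxi(\lambda)^2 = \Vart\myxi(\lambda) + (\Et\myxi(\lambda))^2$) to show each quadratic term contributes $1$ in the limit. The paper states this more tersely, but your write-up makes explicit precisely the two implicit steps the paper leaves to the reader.
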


\begin{remark}
(See~\cite{lulov-pittel,ayyer-sinha}.)
Under the uniform measure on partitions of $n$, the expectation of $|\overline{\lambda}|$ is $\frac{(e-1)\sqrt{6n}}{2\pi}$. The order of the asymptotics is the same but the constant is quite different.
\end{remark}

\subsection{Limit law for \texorpdfstring{$\myxi(\lambda)$}{xilambda}}

 Recall from~\eqref{equation:xrestr} that for $i \in \{0,\dots,e-1\}$ we have
 \begin{equation}
 \label{equation:xrestr_hash}
 \xrestr_i(\lambda) = \#_t^{(i)} - t^2,
 \end{equation}
  where $\#_t^{(i)} = \#(e\Z_{\geq -t^2} + i)\cap \D(\lambda)$. In particular, we have $\Covt \bigl(\xrestr_i(\lambda),\xrestr_j(\lambda)\bigr) = \Covt \bigl(\#_t^{(i)},\#_t^{(j)}\bigr)$. 
Now define
 \begin{equation}
 \label{equation:bij}
 b_{ij} \coloneqq \begin{dcases}
 2\cot\frac{\pi}{2e}, &\text{if } i=j,
 \\
 \cot\bigl(j-i+\half\bigr)\tfrac{\pi}{e} - \cot\bigl(j-i-\half\bigr)\tfrac{\pi}{e}, &\text{if } i \neq j,
 \end{dcases}
 \end{equation}
 for $0 \leq i,j < e$ (the formula for $i \neq j$ being in fact valid for $i= j$), set $c_{ij} \coloneqq \frac{2}{\pi e^2}b_{ij}$ and let $N$ be an $e$-dimensional centred normal random vector with covariance matrix $B = (b_{ij})$. By Theorem~\ref{theorem:limit_covt} and Remark~\ref{remark:cov_i=j}, we know that $\Covt\bigl(\myxi(\lambda),\myxi[j](\lambda)\bigr) \sim c_{ij}\sqrt{t}$ as $t \to +\infty$ (recalling that $c_{ij} > 0$). 
We have
\begin{align*}
t^{-1/2}\Covt\bigl(\#_t^{(i)},\#_t^{(j)}\bigr)
&=
t^{-1/2}\Covt\bigl(\xrestr_i(\lambda),\xrestr_j(\lambda)\bigr)
\\
&=
t^{-1/2}\Covt\Bigl(x_i(\lambda) + \bigl(\xrestr_i(\lambda)-x_i(\lambda)\bigr), x_j(\lambda) + \bigl(\xrestr_i(\lambda)-x_i(\lambda)\bigr)\Bigr)
\\
&=
t^{-1/2}\Covt\bigl(x_i(\lambda),x_j(\lambda)\bigr) + t^{-1/2}o(1).
\end{align*}
Indeed, by Lemma~\ref{lemma:etr_0} and Cauchy-Schwarz inequality we have that, as $t \to +\infty$,
\[
\Covt\bigl(\xrestr_i(\lambda)-\myxi(\lambda),\xrestr_j(\lambda)-\myxi[j](\lambda)\bigr) = o\bigl(t^{-t}\bigr)
\]
and
\[
\Covt\bigl(x_i(\lambda),\xrestr_j(\lambda)-\myxi[j](\lambda)\bigr) = o\bigl(t^{\frac{1}{4}-\frac{t}{2}}\bigr),
\]
(and the same quantity with $i$ and $j$ permuted) thus both are $o(1)$. We deduce that $\Covt\bigl(\#_t^{(i)},\#_t^{(j)}\bigr) \sim_t c_{ij} \sqrt t$ and thus
\[
\frac{\Covt\bigl(\#_t^{(i)},\#_t^{(j)}\bigr)}{\sqrt{\Vart \#_t^{(i)}} \sqrt{\Vart \#_t^{(j)}}} \xrightarrow{t \to +\infty} \frac{b_{ij}}{b_{ii}}.
\]
We can thus apply Theorem~\ref{theorem:clt} to find that the vector
\[
\left(\frac{\#_t^{(i)} - \Et \#_t^{(i)}}{t^{1/4}\sqrt{c_{ii}}}\right)_{i \in \Ze},
\]
converges in distribution to the $e$-dimensional centred normal vector with covariance matrix $\bigl(\frac{b_{ij}}{b_{ii}}\bigr)$, thus the vector
\[
e\sqrt{\frac{\pi}{2}}\left( \frac{\#_t^{(i)}-\Et\#_t^{(i)}}{t^{1/4}}\right)_{i \in \Ze},
\]
converges in distribution to $N$.
 Now by~\eqref{equation:xrestr_hash} we have $\Et\xrestr_i(\lambda) = \Et\#_t^{(i)} - t^2$ and thus
\[
e\sqrt{\frac{\pi}{2}}\left(\frac{\xrestr_i(\lambda) - \Et \xrestr_i(\lambda)}{t^{1/4}}\right)_{i \in \Ze},
\]
converges in distribution to $N$ as well. By Lemma~\ref{lemma:etr_0} and Proposition~\ref{proposition:Et_bounded}, we know that $\Et \xrestr_i(\lambda)$ is bounded as $t \to +\infty$ thus the vector
 \[
e\sqrt{\frac{\pi}{2}}\left(\frac{\xrestr_i(\lambda)}{t^{1/4}}\right)_{i \in \Ze},
\]
again converges in distribution to $N$. Finally, by Slutsky's theorem and Lemma~\ref{lemma:etr_0}, we obtain the following final result (which is Theorem~\ref{theoremet:vector} of the introduction).

\begin{theorem}
\label{theorem:convergence_vector_xi}
Under the Poissonised Plancherel measure $\plt$, as $t \to +\infty$ the random vector
 \[
e\sqrt{\frac{\pi}{2}}\left(\frac{\myxi(\lambda)}{t^{1/4}}\right)_{i \in \Ze},
\]
converges in distribution to the $e$-dimensional  centred normal vector with covariance matrix~$B=(b_{ij})_{i,j\in\Ze}$ given by $b_{ij} = \cot\bigl(j-i+\half\bigr)\tfrac{\pi}{e} - \cot\bigl(j-i-\half\bigr)\tfrac{\pi}{e}$.
\end{theorem}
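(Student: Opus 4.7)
The plan is to invoke the Costin--Lebowitz--Soshnikov central limit theorem (Theorem~\ref{theorem:clt}) for the determinantal point process $\D(\lambda)$ with the disjoint measurable sets $I_t^{(i)} \coloneqq (e\Z_{\geq -t^2} + i)$ for $i \in \Ze$. Writing $\#_t^{(i)}$ for the number of points of $\D(\lambda)$ inside $I_t^{(i)}$, one has by~\eqref{equation:xrestr} the exact relation $\xrestr_i(\lambda) = \#_t^{(i)} - t^2$, so that $\#_t^{(i)}$ and $\xrestr_i(\lambda)$ differ only by a deterministic quantity. The discrete Bessel kernel $\kernelJ$ is admissible by~\cite[Corollary 2.10]{boo}, so the theorem applies.

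The next step is to verify the covariance hypotheses. Since $\xrestr_i(\lambda) - \#_t^{(i)} \in \R$ is deterministic, I have
\[
\Covt\bigl(\#_t^{(i)},\#_t^{(j)}\bigr) = \Covt\bigl(\xrestr_i(\lambda),\xrestr_j(\lambda)\bigr).
\]
Decomposing $\xrestr_i(\lambda) = \myxi(\lambda) + \bigl(\xrestr_i(\lambda)-\myxi(\lambda)\bigr)$ and applying Cauchy--Schwarz together with Lemma~\ref{lemma:etr_0} (which gives super-polynomial decay of the $L^r$-norms of $\xrestr_i(\lambda)-\myxi(\lambda)$), the cross-terms with $\xrestr_i(\lambda)-\myxi(\lambda)$ are $o(1)$ as $t \to +\infty$. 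Hence
\[
\Covt\bigl(\#_t^{(i)},\#_t^{(j)}\bigr) = \Covt\bigl(\myxi(\lambda),\myxi[j](\lambda)\bigr) + o(1),
\]
which by Theorem~\ref{theorem:limit_covt} combined with Remark~\ref{remark:cov_i=j} is asymptotic to $c_{ij}\sqrt t$, where $c_{ij} = \frac{2}{\pi e^2}b_{ij}$ with $b_{ij}$ as in~\eqref{equation:bij} and $c_{ii} > 0$. In particular $\Vart\#_t^{(i)} \to +\infty$ and the normalised covariances $\Covt(\#_t^{(i)},\#_t^{(j)})/\sqrt{\Vart\#_t^{(i)}\Vart\#_t^{(j)}}$ tend to $b_{ij}/b_{ii}$. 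Theorem~\ref{theorem:clt} therefore yields the convergence of the vector $\bigl((\#_t^{(i)}-\Et\#_t^{(i)})/(t^{1/4}\sqrt{c_{ii}})\bigr)_{i \in \Ze}$ to the centred normal vector with covariance $(b_{ij}/b_{ii})_{i,j}$, and after rescaling by $\sqrt{c_{ii}} = \sqrt{2/(\pi e^2)}\sqrt{b_{ii}}$ this translates into convergence of $e\sqrt{\pi/2}\,(\#_t^{(i)}-\Et\#_t^{(i)})/t^{1/4}$ to the centred Gaussian with covariance $B=(b_{ij})$.

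The final transfer from $\#_t^{(i)}-\Et\#_t^{(i)}$ back to $\myxi(\lambda)$ is done in two steps. First, the uncentred version $e\sqrt{\pi/2}\,\xrestr_i(\lambda)/t^{1/4}$ has the same limit law, because $\Et\xrestr_i(\lambda) = \Et\myxi(\lambda) + o(1)$ by Lemma~\ref{lemma:etr_0}, and $\Et\myxi(\lambda) = O(1)$ by Proposition~\ref{proposition:Et_bounded}, so the difference of normalisations is $O(t^{-1/4})\to 0$. Second, Lemma~\ref{lemma:etr_0} gives $t^{-1/4}(\xrestr_i(\lambda)-\myxi(\lambda)) \to 0$ in $L^1$ and hence in probability, so Slutsky's theorem (applied coordinate-wise) allows replacement of $\xrestr_i(\lambda)$ by $\myxi(\lambda)$, yielding the statement.

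The substantive obstacles, namely the covariance asymptotics (Theorem~\ref{theorem:limit_covt}, Corollary~\ref{corollary:asymptotics_vart}) and the truncation bound (Lemma~\ref{lemma:etr_0}), have already been established in the previous section. The only remaining difficulty in this proof is essentially bookkeeping: checking that the truncation error is negligible at the scale $t^{1/4}$, and carefully chasing the scaling constant so as to identify the limit covariance matrix as exactly $B$ rather than some scalar multiple.
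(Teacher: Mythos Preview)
Your proposal is correct and follows essentially the same route as the paper's own proof: apply Theorem~\ref{theorem:clt} to the counts $\#_t^{(i)}$ on the truncated sets $e\Z_{\geq -t^2}+i$, transfer the covariance asymptotics from $\myxi$ to $\xrestr_i$ via Lemma~\ref{lemma:etr_0} and Cauchy--Schwarz, then undo the centring using Proposition~\ref{proposition:Et_bounded} and finally replace $\xrestr_i$ by $\myxi$ via Slutsky. The bookkeeping of constants and the order of the reductions match the paper exactly.
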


 \begin{remark}
In many cases of determinantal point processes (as in~\cite{costin-lebowitz,soshnikov:gaussian,bogachev-su}), the variable with which we use the central limit theorem has a variance logarithmic in the expectation. In our case this does not hold, recalling Proposition~\ref{proposition:Et_bounded} (the first order asymptotics of the expectation vanishes  by the formulas in Lemma~\ref{lemma:x_i_card_Fr}).
 \end{remark}
 
\begin{remark}
\label{remark:length_first_row}
By Remark~\ref{remark:eabacus} and, for instance, \cite[top of p.81]{rostam}, it follows from the $e$-abacus construction that 
\[
\overline\lambda_1 = e \max_{0  \leq i < e} x_i(\lambda) - i_0,
\]
for some $0 \leq i_0 < e$ (depending on the partition $\lambda$).
Hence, we obtain from Theorem~\ref{theorem:convergence_vector_xi} that, under the Poissonised Plancherel measure $\plt$, the rescaled size of the first part of the $e$-core $t^{-1/4}\overline\lambda_1$ converges in distribution as $t \to +\infty$ to $\sqrt{\frac{2}{\pi}}\max N$, where $N$ is a centred normal distribution with the covariance matrix $B$ of Theorem~\ref{theorem:convergence_vector_xi}. We are thus reduced to study the maximum of a (correlated) Gaussian distribution, which is a well-known problem.
\end{remark}

\subsection{Limit law for \texorpdfstring{$|\overline{\lambda}|$}{the size of core of lambda}}
\label{subsection:limit_law_size_core}

Note that the covariance matrix $B=(b_{ij})$ of Theorem~\ref{theorem:convergence_vector_xi} is (symmetric, positive semi-definite and) circulant. Recall from~\eqref{equation:size_core} that
\[
\lvert \overline{\lambda}\rvert = \frac{e}{2}\sum_{i = 0}^{e-1} \myxi(\lambda)^2 + \sum_{i =0}^{e-1} i x_i(\lambda).
\]
Hence, by Theorem~\ref{theorem:convergence_vector_xi} and Slutsky's theorem we know that $t^{-1/2}|\overline{\lambda}|$ has asymptotically the same law as $t^{-1/2}\frac{e}{2}\sum_{i = 0}^{e-1} \myxi(\lambda)^2$. To explicit this law, it suffices to compute the eigenvalues of $B$. Note that since each line of $B$ sums to $0$ (by~\eqref{equation:sum_xi_0}), we already know that $0$ is an eigenvalue of $B$.


\begin{lemma}
\label{lemma:eigenvalues_covariance}
The eigenvalues of $B$ are
\[
\lambda_k = 2e\sin\frac{k\pi}{e},
\]
for $k \in \{0,\dots,e-1\}$. In particular, the only zero eigenvalue is $\lambda_0$.
\end{lemma}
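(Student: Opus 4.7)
The matrix $B$ is circulant — its $(i,j)$-entry depends only on $j-i$ modulo $e$, since $\cot$ is $\pi$-periodic and shifting $j-i$ by $e$ adds $\pi$ to both arguments. Consequently, the eigenvalues are the discrete Fourier coefficients of the first row,
\[
\lambda_k = \sum_{j=0}^{e-1} b_{0j}\, \zeta^{jk}, \qquad k \in \{0,\dots,e-1\},\ \zeta \coloneqq \exp(2\pi\ic/e),
\]
and the plan is to evaluate this sum explicitly. My first step would be to exploit the telescoping structure of $b_{0j} = \cot((j+\half)\pi/e) - \cot((j-\half)\pi/e)$: writing the two cotangents separately and shifting the index of the second sum by $+1$, which is legitimate because both $\cot((j+\half)\pi/e)$ and $\zeta^{jk}$ are $e$-periodic in $j$, I would factor out $(1-\zeta^k)$ and reduce to the computation of
\[
S_k \coloneqq \sum_{j=0}^{e-1} \cot\bigl((j+\half)\tfrac{\pi}{e}\bigr)\, \zeta^{jk}.
\]

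For $S_k$ I would follow the contour-integration strategy announced in the introduction. The key move is the substitution $\cot\theta = \ic(\exp(2\ic\theta)+1)/(\exp(2\ic\theta)-1)$: setting $\eta \coloneqq \exp(\ic\pi/e)$ so that $\eta^2 = \zeta$, the points $z = \eta\zeta^j$ for $j=0,\dots,e-1$ run exactly over the $e$ solutions of $z^e=-1$, giving
\[
S_k = \ic\,\eta^{-k}\sum_{z^e=-1}\frac{z+1}{z-1}\,z^k.
\]
I would then evaluate this sum either as a residue sum of $\frac{ew^{e-1}}{w^e+1}\cdot\frac{(w+1)w^k}{w-1}$ (the contour-integration viewpoint) or, more elementarily, by splitting $(z+1)/(z-1)=1+2/(z-1)$, expanding $(z^k-1)/(z-1)$ as a geometric polynomial, and invoking the identities $\sum_{z^e=-1} z^m = 0$ for $1\le m\le e-1$ (with $\sum_{z^e=-1}1=e$) together with $\sum_{z^e=-1}1/(z-1) = -P'(1)/P(1) = -e/2$ for $P(w)=w^e+1$. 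This should give $S_0=0$ and $S_k = \ic e\,\eta^{-k}$ for $1\le k\le e-1$.

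Combining these would yield
\[
\lambda_k = (1-\zeta^k)\,S_k = \ic e\,\eta^{-k}(1-\eta^{2k}) = -\ic e\,(\eta^k - \eta^{-k}) = 2e\sin\tfrac{k\pi}{e},
\]
which is strictly positive for $k\in\{1,\dots,e-1\}$ and vanishes only at $k=0$, consistent with the constant eigenvector predicted by $\sum_i\myxi(\lambda)=0$ from~\eqref{equation:sum_xi_0}. The main obstacle is the closed-form evaluation of the sum defining $S_k$; once the substitution to $e$-th roots of $-1$ is in place, the residue calculus is routine but one must carefully separate the cases $k=0$ and $1\le k\le e-1$, and keep track of the phase factor $\eta^{-k}$ that makes the final answer real.
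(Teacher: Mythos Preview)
Your argument is correct and complete. Both you and the paper recognise that $B$ is circulant and that the eigenvalues are the discrete Fourier coefficients of the first row, but the evaluations diverge from there. The paper computes $\lambda_k=\sum_j b_j\cos\frac{2kj\pi}{e}$ directly by integrating a tailored meromorphic function $g=g_1g_2$ over a rectangular contour (following Berndt--Yeap on Eisenstein-type cotangent sums): $g_1$ is built so that its simple poles at $j/e$ pick up $b_j\cos(2\pi kj/e)$, while the cotangent difference $g_2$ contributes poles at $\pm\frac{1}{2e}$ whose residues produce the $\sin(k\pi/e)$ term; vanishing of the contour integral then gives the identity. Your route is more elementary: you first exploit the telescoping structure of $b_{0j}$ to factor out $(1-\zeta^k)$, then the substitution $\cot\theta=\ic(e^{2\ic\theta}+1)/(e^{2\ic\theta}-1)$ converts $S_k$ into a rational sum over the $e$-th roots of $-1$, which falls to geometric-series manipulations and the logarithmic-derivative identity $\sum_{z^e=-1}\frac{1}{z-1}=-P'(1)/P(1)=-e/2$. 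Your approach is shorter and avoids contour integration altogether; the paper's approach situates the computation within the classical literature on finite trigonometric sums and mirrors the Berndt--Yeap template, at the cost of a longer residue calculus.
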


\begin{proof}
For any $0 \leq j < e$, we write $b_j \coloneqq b_{0j}$ the entries of the first row of $B$.  Note that since $\cot(\pi+x) = \cot(x)$, we deduce that $b_j$ is defined for $j \in \Ze$.
Since $B$ is circulant, with the permutation matrix
\[
\permat \coloneqq \begin{pmatrix}
&1
\\
&&\ddots
\\
&&&1
\\
1
\end{pmatrix},
\]
we have
\[
B = \sum_{j = 0}^{e-1} b_j \permat^j.
\]
 Now $\permat$ has each element of $\mu_e(\mathbb{C})$ for eigenvalue, an eigenvector associated with $\zeta \in \mu_e(\mathbb{C})$ being $\bigl(\zeta^j\bigr)_{0\leq j < e}$. We deduce that the eigenvalues of $B$ are the elements, with $\zeta_k \coloneqq \exp(2\ic k\pi/e)$ for $k \in \Ze$,
 \begin{equation}
 \label{equation:lambdak_sum_bj}
 \lambda_k = \sum_{j = 0}^{e-1} b_j \zeta_k^j
 =
 \sum_{j = 0}^{e-1} b_j \cos\frac{2kj\pi}{e},
 \end{equation}
 recalling that the eigenvalues of $B$ are real since $B$ is real symmetric.
Recalling that $b_j = \cot(j+\half)\frac{\pi}{e} - \cot(j-\half)\frac{\pi}{e}$, we thus have to compute the sum
\[
\sum_{j = 0}^{e-1} \left[\cot(j+\half)\frac{\pi}{e} - \cot(j-\half)\frac{\pi}{e}\right] \cos\frac{2kj\pi}{e},
\]
for $0 \leq k < e$. First, if $k = 0$ then $\lambda_0 = 0$ since $\cot$ is $\pi$-periodic. We thus now assume $k\neq 0$.

We mimic the calculation in~\cite[\textsection 4]{berndt} of the following trigonometric sum (the case $n=1$ having been first studied by Eisenstein):
\[
\sum_{j = 1}^{e-1} \sin\frac{2\pi kj}{e}\cot^n\left(\frac{\pi j}{e}\right).
\]
Let us consider the following meromorphic function over $\mathbb{C}$:
\[
g_1 : z \mapsto \frac{\exp 2\ic\pi kz}{\exp(2\ic \pi ez)-1} - \frac{\exp(-2\ic\pi kz)}{\exp(-2\ic \pi ez)-1}.
\]
The function $g_1$ has simple poles at each $z = \frac{j}{e}$ for $j \in \Z$, with residue
\begin{align*}
\frac{\exp \frac{2\ic\pi kj}{e}}{2\ic\pi e \exp(2i\pi j)} - \frac{\exp \frac{-2\ic\pi kj}{e}}{-2\ic\pi e \exp(-2\ic\pi j)}
&=
\frac{\exp \frac{2\ic\pi kj}{e}}{2\ic\pi e} - \frac{\exp \frac{-2\ic\pi kj}{e}}{-2\ic\pi e}
\\
&=
\frac{1}{\ic\pi e}\cos\frac{2\pi kj}{e}.
\end{align*}
Now the following meromorphic function over $\mathbb{C}$:
\[
g_2 : z \mapsto  \cot \pi\left(z + \frac{1}{2e}\right) - \cot\pi\left(z - \frac{1}{2e}\right),
\]
has simple poles at each $\pm\frac{1}{2e} + \Z$. Note that since $e \geq 2$ then each pole is simple indeed. Recalling that $z \cot z \sim 1$ as $z \to 0$, we find that the residue at $\pm\frac{1}{2e} + \Z$ is $\pm\frac{ 1}{\pi}$.  We thus find that, with $g \coloneqq g_1 g_2$,
\begin{align*}
\mathrm{Res}_{\frac{j}{e}} g
&=
\mathrm{Res}_{\frac{j}{e}} (g_1) g_2\left(\frac{j}{e}\right)
\\
&=
\frac{1}{\ic\pi e} \cos\frac{2\pi k j}{e}\left[\cot \pi\left(\frac{j}{e} + \frac{1}{2e}\right) - \cot\pi\left(\frac{j}{e} - \frac{1}{2e}\right)\right]
\\
&=
\frac{b_j}{\ic\pi e}\cos\frac{2\pi kj}{e},
\end{align*}
for all $j \in \Z$, and
\begin{align*}
\mathrm{Res}_{\pm \frac{1}{2e}} g
&=
g_1\left(\frac{\pm 1}{2e}\right) \mathrm{Res}_{\pm \frac{1}{2e}} g_2
\\
&=
\pm\frac{1}{\pi}\left(\frac{\exp \frac{\pm\ic \pi k}{e}}{-2} - \frac{\exp \frac{\mp\ic \pi k}{e}}{-2}\right)
\\
&=
\mp\frac{\ic}{\pi}\sin\frac{\pm\pi k}{e}
\\
&=
-\frac{\ic}{\pi} \sin\frac{\pi k}{e}.
\end{align*}

\begin{figure}
\begin{center}
\begin{tikzpicture}
\draw[-stealth] (-3,0) -- (3,0);
\draw[-stealth] (0,-3) -- (0,3);

\draw[very thick] (0,.2) -- (0,2) node[above left]{$\ic R$}  -- (1,2) node[above right]{$1 + \ic R$} node[midway]{$<$} -- (1,.2) arc (90:260:.2) -- (1,-2) node[below right]{$1-\ic R$} -- (0,-2) node[below left]{$- \ic R$} node[midway]{$>$} -- (0,-.2) arc (260:90:.2);
\end{tikzpicture}
\end{center}
\caption{Contour $C_R$}
\label{figure:contour}
\end{figure}
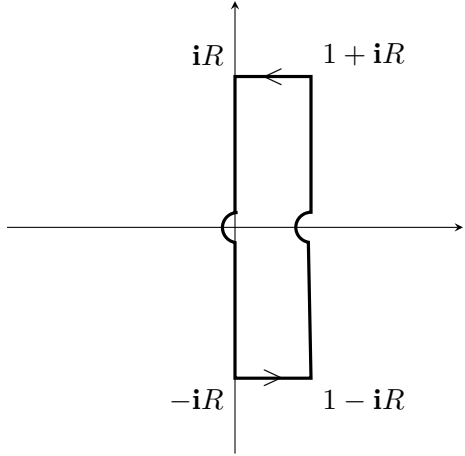

We now consider the integral of $g \coloneqq g_1 g_2$ on the contour $C_R$ given by the direct rectangle with vertices $\pm \ic R$ and $1 \pm \ic R$, with the points $0$ and $1$ avoided from the left by a small semi-circle of radius $< \frac{1}{2e}$ (see Figure~\ref{figure:contour}). Note that the only poles of $g$ inside $C_R$ are the $\frac{j}{e}$ for $0 \leq j < e$, together with $\frac{1}{2e}$ and $1-\frac{1}{2e}$.  Recalling that $\cot(\pi + z) = \cot(z)$, we obtain that $g$ is $1$-periodic thus the integral of $g$ on the two vertical sides of $C_R$ vanishes. Now for $x,y \in \mathbb{R}$ we have
\[
\frac{\exp \pm 2\ic \pi k(x+\ic y)}{\exp\bigl(\pm 2\ic \pi e(x+\ic y)\bigr)-1}
=
\frac{\exp(\pm 2\ic \pi k x )\exp(\mp 2\pi k y)}{\exp(\pm 2\ic \pi ex) \exp(\mp 2 \pi ey)-1}
\]
that goes to $0$ as $|y|\to +\infty$ (since $1 \leq k < e$)   uniformly in $x$, thus $g_1(x+\ic y) \xrightarrow{|y| \to +\infty} 0$ uniformly in $x \in \R$. Moreover, recalling from~\cite[4.21.40]{nist} that $\cot(a+\ic b) = \frac{\sin 2a - \ic \sinh 2b}{\cosh 2b - \cos 2a}$ we have:
\[
\cot\pi\left(x+\ic y \pm \frac{1}{2e}\right) 
=\frac{\sin 2\pi\left(x\pm \frac{1}{2e}\right) - \ic \sinh(2\pi y)}{-\cos 2\pi\left(x\pm \frac{1}{2e}\right) + \cosh(2\pi y)}
\]
that goes to $\pm\ic$ as $|y| \to +\infty$, uniformly in $x \in \R$.  We deduce that $g_2(x+\ic y)\xrightarrow{|y|\to+\infty} 0$ uniformly in~$x\in [0,1]$ and thus so does $g(x+\ic y)$.  Since the only poles of $g$ are on the real axis, we deduce that $\int_{C_R} g(z) dz = 0$ for any $R > 0$. Now from the residue theorem we obtain that
 \begin{align*}
 0 &= \mathrm{Res}_{\frac{1}{2e}} g + \mathrm{Res}_{1-\frac{1}{2e}} g+ \sum_{j = 0}^{e-1} \mathrm{Res}_{\frac{j}{e}} g
 \\
 &=
-\frac{2\ic}{\pi}\sin\frac{\pi k}{e} + \frac{1}{\ic \pi e}\sum_{j = 0}^{e-1} b_j \cos \frac{2\pi kj}{e},
\end{align*}
thus, recalling~\eqref{equation:lambdak_sum_bj},
\[
\lambda_k = 2e\sin\frac{\pi k}{e}.
\]
\end{proof}

Let $N = (N_i)_{0 \leq i < e}$ be a centred normal (column) vector with covariance matrix $B$ and let $Q$ be an orthogonal matrix such that $QBQ^\top = D$ with $D \coloneqq \mathrm{diag}(\lambda_j)_{0 \leq j < e}$. The random vector $(N'_i)_{0 \leq i < e}$ given by $N' = QN$ is a centred  normal vector with covariance matrix $D$ and we have $\sum_{i = 0}^{e-1} N_i^2 = \sum_{i = 0}^{e-1} N_i^{\prime 2}$.
Now $N'_i \simeq \mathcal{N}(0,\lambda_i)$ is a centred normal distribution with variance $\lambda_i$, moreover if $i \neq j$ then $N'_i$ and $N'_j$ are independent. By Lemma~\ref{lemma:eigenvalues_covariance} we have $\lambda_0 = 0 < \lambda_i$ for all $1 \leq i < e$, in particular $N_0 = 0$ almost surely and each $N_i^{\prime 2}$ for $i \geq 1$ has a Gamma distribution  $\Gamma(\half,2\lambda_i)$ with shape $\half$ and scale $2\lambda_i$. We deduce the main result of the paper (Theorem~\ref{theoremet:core} of the introduction).

\begin{theorem}
\label{theorem:size_core_plancherel}
The rescaled size $\frac{\pi}{4\sqrt t}|\overline{\lambda}|$ of the $e$-core of $\lambda$ has, under the Poissonised Plancherel measure $\plt$, asymptotically as $t \to +\infty$ the distribution of a sum of $e-1$ mutually independent $\Gamma\bigl(\half,\sin\frac{k\pi}{e}\bigr)$ for $k \in \{1,\dots,e-1\}$.
\end{theorem}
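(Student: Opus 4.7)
The plan is to assemble the theorem from three ingredients already available: the quadratic-linear decomposition~\eqref{equation:size_core} of $|\overline\lambda|$ in terms of $\myxi(\lambda)$, the multivariate CLT from Theorem~\ref{theorem:convergence_vector_xi}, and the spectrum of the covariance matrix $B$ computed in Lemma~\ref{lemma:eigenvalues_covariance}. Concretely, I would first rewrite
\[
\frac{\pi}{4\sqrt t}|\overline{\lambda}|
=
\frac{\pi}{4\sqrt t}\cdot\frac{e}{2}\sum_{i\in\Ze} \myxi(\lambda)^2
+
\frac{\pi}{4\sqrt t}\sum_{i=0}^{e-1} i\myxi(\lambda),
\]
and observe that Theorem~\ref{theorem:convergence_vector_xi} implies $\myxi(\lambda)=O_{\P}(t^{1/4})$, so the linear term is $O_{\P}(t^{-1/4})$ and converges to $0$ in probability. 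By Slutsky's theorem, the asymptotic distribution of $\frac{\pi}{4\sqrt t}|\overline\lambda|$ coincides with that of the quadratic term alone.

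Next I would apply the continuous mapping theorem to the quadratic form $v\mapsto \sum v_i^2$. Setting $Y_i^{(t)} \coloneqq e\sqrt{\pi/2}\,t^{-1/4}\myxi(\lambda)$, Theorem~\ref{theorem:convergence_vector_xi} gives $Y^{(t)}\xrightarrow{d} N$, where $N$ is centred Gaussian with covariance matrix $B$. Continuous mapping then yields
\[
\frac{\pi}{4\sqrt t}\cdot\frac{e}{2}\sum_{i\in\Ze}\myxi(\lambda)^2
=
\frac{1}{4e}\sum_{i\in\Ze} \bigl(Y_i^{(t)}\bigr)^2
\xrightarrow{d}
\frac{1}{4e}\sum_{i\in\Ze} N_i^2.
\]

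It remains to identify $\frac{1}{4e}\sum N_i^2$ as the advertised sum of independent Gammas. Since $B$ is real symmetric, pick an orthogonal $Q$ with $QBQ^\top = D = \mathrm{diag}(\lambda_0,\dots,\lambda_{e-1})$, where $\lambda_k = 2e\sin\tfrac{k\pi}{e}$ by Lemma~\ref{lemma:eigenvalues_covariance}. The vector $N' \coloneqq QN$ is centred Gaussian with diagonal covariance $D$, so its coordinates are independent with $N'_k\sim\mathcal{N}(0,\lambda_k)$. Orthogonality preserves the Euclidean norm: $\sum N_i^2 = \sum (N'_k)^2$. Since $\lambda_0=0$, the $k=0$ term vanishes almost surely, and for $k\in\{1,\dots,e-1\}$ the variable $(N'_k)^2$ follows the distribution $\Gamma(\tfrac12,2\lambda_k)$ (the square of a centred Gaussian of variance $\sigma^2$ being $\Gamma(\tfrac12,2\sigma^2)$ in shape–scale convention). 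Applying the scaling rule $c\,\Gamma(\alpha,\theta)=\Gamma(\alpha,c\theta)$ with $c=\frac{1}{4e}$ converts $\frac{1}{4e}(N'_k)^2$ into $\Gamma\bigl(\tfrac12,\frac{2\lambda_k}{4e}\bigr)=\Gamma\bigl(\tfrac12,\sin\tfrac{k\pi}{e}\bigr)$, and the $N'_k$'s being independent the sum has exactly the law claimed.

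No single step is a real obstacle since all hard analytic estimates are encapsulated in Theorem~\ref{theorem:convergence_vector_xi} and Lemma~\ref{lemma:eigenvalues_covariance}; the only point deserving care is the treatment of the linear term $\sum i\,\myxi(\lambda)$ to justify dropping it via Slutsky, and the bookkeeping of the normalising constant $\frac{1}{4e}$ when translating scale parameters through the Gamma scaling identity.
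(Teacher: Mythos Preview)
Your proposal is correct and follows essentially the same approach as the paper: both use the quadratic-linear decomposition~\eqref{equation:size_core}, drop the linear term via Slutsky and Theorem~\ref{theorem:convergence_vector_xi}, apply the continuous mapping theorem to the squared norm, and then diagonalise $B$ with Lemma~\ref{lemma:eigenvalues_covariance} to identify the limit as a sum of independent Gammas. The constant bookkeeping (the factor $\tfrac{1}{4e}$ and the scale $\tfrac{2\lambda_k}{4e}=\sin\tfrac{k\pi}{e}$) is handled correctly.
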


\begin{proof}
We saw at the beginning of~\textsection\ref{subsection:limit_law_size_core} that $t^{-1/2}|\overline{\lambda}|$ has asymptotically the same law as $t^{-1/2}\frac{e}{2}\sum_{i = 0}^{e-1} \myxi(\lambda)^2$. By Theorem~\ref{theorem:convergence_vector_xi} and the preceding discussion, since the vector $t^{-1/4}e\sqrt{\frac{\pi}{2}}\bigl(\myxi(\lambda)\bigr)_i$ converges in distribution to $N$, we deduce that $t^{-1/2} e^2 \frac{\pi}{2}\sum_{i = 0}^{e-1} \myxi(\lambda)^2$ converges in distribution to a random variable $G$ that is a sum of independent $\Gamma(\half,2\lambda_i)$ for $1 \leq i < e$. We deduce that $t^{-1/2}|\overline{\lambda}|$ converges in distribution to $\frac{1}{e\pi}G$. Recalling from Lemma~\ref{lemma:eigenvalues_covariance} that $\lambda_i = 2e\sin\frac{\pi i}{e}$, we have that $\frac{1}{4e}G$ is a sum of independent $\Gamma\bigl(\frac{1}{2},\sin\frac{\pi i}{e}\bigr)$. This concludes the proof since $\frac{\pi}{4\sqrt t}|\overline{\lambda}|$ converges in distribution to $\frac{1}{4e}G$.
\end{proof}

\begin{remark}
 If the Plancherel measure is replaced by the uniform one, then by~\cite{lulov-pittel,ayyer-sinha} the random variable $\frac{\pi}{\sqrt n}|\overline{\lambda}|$ converges in distribution to $\Gamma(\frac{e-1}{2},\sqrt 6)$, which is a sum of $e-1$ independent $\Gamma\bigl(\frac{1}{2},\sqrt 6\bigr)$. This situation thus ``corresponds'' to $\frac{2\lambda_k}{e} = \sqrt 6$ for all $1 \leq k < e$ (of course this is not clear at all whether $t^{-1/4} x(\lambda)$ converges in distribution to a normal vector in the uniform case).
 \end{remark}
 
 \begin{remark}
Noting that $\lambda_k = \lambda_{-k}$ for $k \in \Ze$, we obtain that the sum of $e-1$ mutually independent Gamma distributions in Theorem~\ref{theorem:size_core_plancherel} is in fact a sum of $\bigl\lfloor\frac{e}{2}\bigr\rfloor$ mutually independent Gamma distributions.
 \end{remark}

We illustrate with Figures~\ref{figure:e4} and~\ref{figure:e7} the pointwise convergence of the cumulative distribution functions in Theorem~\ref{theorem:size_core_plancherel} for the (non-Poissonised) Plancherel measures $\pl$ for $n = 100, 500, 3000$ respectively. Each (renormalised) histogram is constructed from $7000$ trials, the range $[0,5]$ being divided into $200$ bins. These simulations  indicate that Theorem~\ref{theorem:size_core_plancherel} should still hold in this non-Poissonised setting.

\begin{figure}
\begin{tabular}{ccc}
\includegraphics[scale=.3]{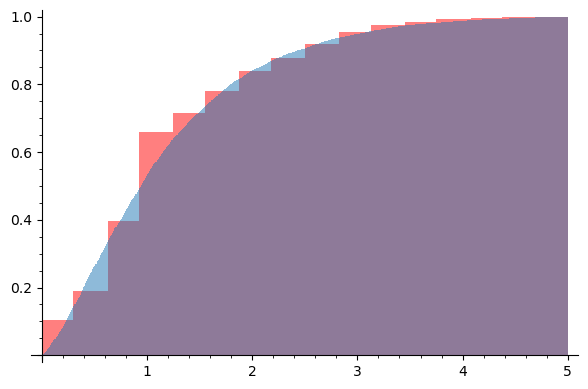}
&
\includegraphics[scale=.3]{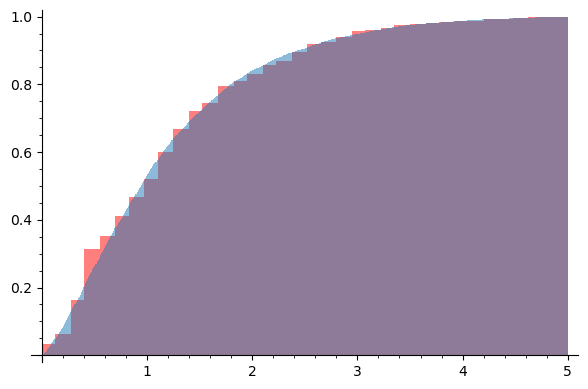}
&
\includegraphics[scale=.3]{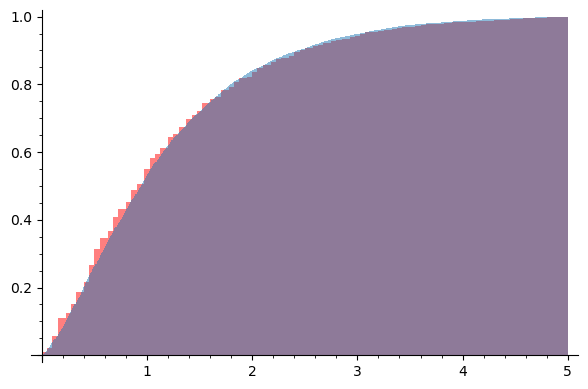}
\end{tabular}
\caption{Case $e  =4$. Convergence to $\Gamma\bigl(1,\frac{1}{\sqrt 2}\bigr) + \Gamma(\frac{1}{2},1)$.}
\label{figure:e4}
\bigskip
\begin{tabular}{ccc}
\includegraphics[scale=.3]{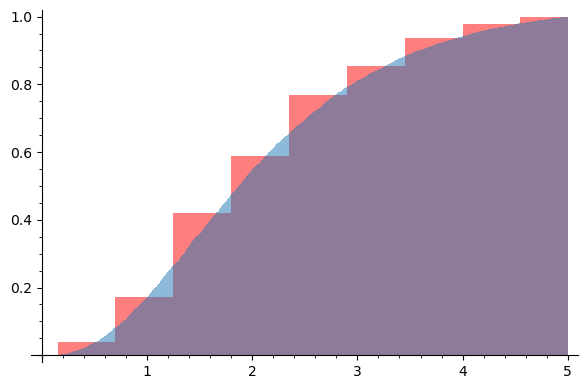}
&
\includegraphics[scale=.3]{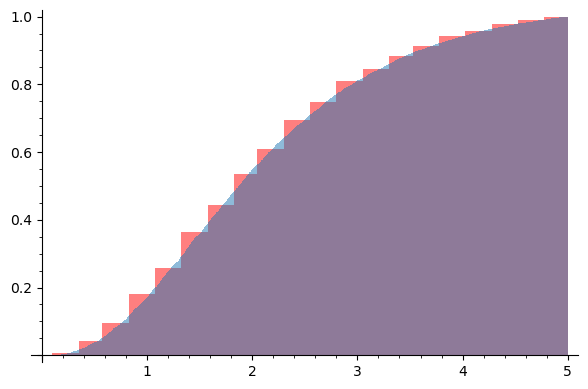}
&
\includegraphics[scale=.3]{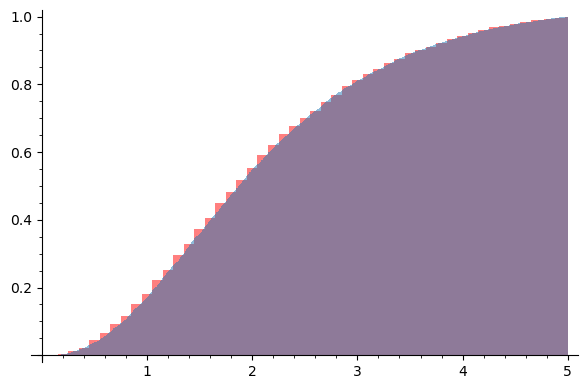}
\end{tabular}
\caption{Case $e  =7$. Convergence to $\sum_{k = 1}^3 \Gamma\bigl(1,\sin \frac{k\pi}{7}\bigr)$.}
\label{figure:e7}
 \end{figure}
 
\subsection{Limit variance}

The proof of Theorem~\ref{theorem:clt} in~\cite{soshnikov:gaussian} shows that the convergence in Theorem~\ref{theorem:clt} holds in fact in moments. In particular, the convergence of Theorem~\ref{theorem:size_core_plancherel} holds in moments and we recover Proposition~\ref{proposition:limit_Et_core}, recalling that $\E\Gamma(k,\theta) = k\theta$ and $\sum_{k = 1}^{e-1} \sin\frac{k\pi}{e} = \cot\frac{\pi}{2e}$ (see, for instance,~\cite[1.344.1]{grad}). We also obtain the following result.

\begin{corollary}
\label{corollary:limit_variance}
Under the Poissonised Plancherel measure $\plt$, as $t \to +\infty$ we have
\[
\Vart |\overline{\lambda}| \sim \frac{4et}{\pi^2}.
\]
\end{corollary}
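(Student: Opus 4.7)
The plan is to take the convergence in distribution of Theorem~\ref{theorem:size_core_plancherel} and upgrade it to convergence of the variance, then compute the variance of the limiting distribution explicitly. The paper already notes that the central limit theorem of Costin--Lebowitz--Soshnikov (Theorem~\ref{theorem:clt}) in fact gives convergence in moments, not just in distribution; this is inherited by Theorem~\ref{theorem:convergence_vector_xi} and, through~\eqref{equation:size_core} and Slutsky's theorem, by Theorem~\ref{theorem:size_core_plancherel}. Consequently
\[
\Vart\!\left(\tfrac{\pi}{4\sqrt t}|\overline\lambda|\right) \xrightarrow[t\to+\infty]{} \Var(G),
\]
where $G = \sum_{k=1}^{e-1} G_k$ is a sum of mutually independent $G_k \sim \Gamma\bigl(\tfrac12,\sin\tfrac{k\pi}{e}\bigr)$.

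By mutual independence, $\Var(G) = \sum_{k=1}^{e-1}\Var(G_k)$. Using the standard formula $\Var\Gamma(\alpha,\theta) = \alpha\theta^2$, we obtain
\[
\Var(G) = \frac12 \sum_{k=1}^{e-1} \sin^2\tfrac{k\pi}{e}.
\]
The remaining trigonometric sum is handled by writing $\sin^2 x = \tfrac{1-\cos 2x}{2}$, which gives
\[
\sum_{k=1}^{e-1} \sin^2\tfrac{k\pi}{e} = \frac{e-1}{2} - \frac12 \sum_{k=1}^{e-1}\cos\tfrac{2k\pi}{e} = \frac{e-1}{2} + \frac12 = \frac{e}{2},
\]
where we used $\sum_{k=0}^{e-1} \cos\tfrac{2k\pi}{e} = 0$. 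Hence $\Var(G) = e/4$, and multiplying by $(4\sqrt t/\pi)^2 = 16t/\pi^2$ yields $\Vart|\overline\lambda| \sim \frac{4et}{\pi^2}$.

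There is essentially no obstacle here: the only non-routine point is invoking the ``moments'' strengthening of the Costin--Lebowitz--Soshnikov theorem, which is already flagged earlier in the excerpt, so the proof reduces to the short computations above.
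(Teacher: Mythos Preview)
Your proof is correct and follows essentially the same approach as the paper: invoke the moment-convergence strengthening of the Costin--Lebowitz--Soshnikov theorem to pass from Theorem~\ref{theorem:size_core_plancherel} to convergence of the variance, then compute $\Var(G) = \tfrac12\sum_{k=1}^{e-1}\sin^2\tfrac{k\pi}{e} = \tfrac{e}{4}$ and rescale. The only cosmetic difference is that the paper evaluates the trigonometric sum via a formula from~\cite{grad}, whereas your use of $\sin^2 x = \tfrac{1-\cos 2x}{2}$ together with $\sum_{k=0}^{e-1}\cos\tfrac{2k\pi}{e}=0$ is more direct.
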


\begin{proof}
Recall that $\Var\,\Gamma(k,\theta) = k\theta^2$, so that by Theorem~\ref{theorem:size_core_plancherel} and the preceding discussion we have
\[
\lim_{t \to +\infty} \frac{\pi^2}{16 t} \Vart |\overline{\lambda}| = \frac{1}{2} \sum_{k = 1}^{e-1} \sin^2\frac{k\pi}{e}.
\]
Now recalling the formula
\[
\sum_{k = 1}^{e-1} \sin^2 kx = \frac{e-1}{2} - \frac{\cos ex \sin(e-1)x}{2\sin x},
\]
for $x \notin \pi\Z$ (see, for instance,~\cite[1.351.1]{grad}), we obtain
\begin{align*}
\sum_{k = 1}^{e-1} \sin^2\frac{k\pi}{e}
&=
\frac{e-1}{2} - \frac{\cos \pi \sin (e-1)\frac{\pi}{e}}{2\sin\frac{\pi}{e}}
\\
&=
\frac{e-1}{2} + \frac{1}{2}
\\
&=
\frac{e}{2},
\end{align*}
and thus the desired result.
\end{proof}

\section{Number of \texorpdfstring{$i$}{i}-nodes}
\label{section:inodes}

For $i \in \Ze$, we now study the number $\ci(\lambda)$ of $i$-nodes of $\lambda$.

\subsection{Poissonised version}

Recall from~\eqref{equation:eweight} that the $e$-weight of $\lambda$ is the integer $\mathrm{w}_e(\lambda)$ such that
\[
|\lambda| = |\overline{\lambda}| + e\mathrm{w}_e(\lambda),
\]
more precisely by~\eqref{equation:cilambda_cicore} we have
\[
\ci(\lambda) = \ci(\overline{\lambda}) + \mathrm{w}_e(\lambda),
\]
for all $i \in \Ze$.  If $\lambda$ is chosen under $\plt$ and $t \to +\infty$, we know that $|\lambda|$ follows a Poisson distribution with parameter $t$ thus by the law of large numbers we have that $\frac{|\lambda|}{t}$ converges in distribution to $1$ (recalling that $|\lambda|$ is then a sum of $t$ independent Poisson variables of parameter $1$). By Theorem~\ref{theorem:size_core_plancherel}, we know that $\frac{|\overline{\lambda}|}{t}$ converges in distribution to $0$, thus by Slutsky's theorem we deduce that $\frac{\mathrm{w}_e(\lambda)}{t}$ converges in distribution to $\frac{1}{e}$.
 
 By~\eqref{equation:sum_ci} and Theorem~\ref{theorem:size_core_plancherel}, we know that $\frac{\ci(\overline{\lambda})}{t}$ converges to $0$ in distribution. We have just seen that $\frac{\mathrm{w}_e(\lambda)}{t}$ converges to $\frac{1}{e}$ in distribution, thus we have proven the following result.

\begin{proposition}
\label{proposition:ci/t}
Let $i \in \Ze$. Under the Poissonised Plancherel measure $\plt$, the random variable $\frac{\ci(\lambda)}{t}$ converges in distribution (and thus in probability) to $\frac{1}{e}$ as $t \to +\infty$.
\end{proposition}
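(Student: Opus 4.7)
The plan is essentially to execute rigorously the informal argument already sketched in the paragraph preceding the proposition, splitting $\ci(\lambda)$ via the two identities
\[
|\lambda| = |\overline\lambda| + e\,\mathrm{w}_e(\lambda), \qquad \ci(\lambda) = \ci(\overline\lambda) + \mathrm{w}_e(\lambda),
\]
and controlling each piece separately after dividing by $t$.

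First I would handle the ``bulk'' term $\mathrm{w}_e(\lambda)/t$. Under $\plt$ the size $|\lambda|$ is Poisson with parameter $t$, and the (weak) law of large numbers gives $|\lambda|/t \to 1$ in probability as $t \to +\infty$ (one can even invoke the fact that $\Vart|\lambda|/t^2 = 1/t \to 0$). Next, Theorem~\ref{theorem:size_core_plancherel} (or already Proposition~\ref{proposition:limit_Et_core} combined with Markov's inequality) shows that $|\overline\lambda|/\sqrt t$ converges in distribution, hence in particular $|\overline\lambda|/t \to 0$ in probability. Combining these via Slutsky's theorem yields
\[
\frac{\mathrm{w}_e(\lambda)}{t} = \frac{1}{e}\cdot\frac{|\lambda|}{t} - \frac{1}{e}\cdot\frac{|\overline\lambda|}{t} \xrightarrow[t\to+\infty]{\mathbb{P}} \frac{1}{e}.
\]

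Second, I would show that the ``core residue'' $\ci(\overline\lambda)/t$ is negligible. By~\eqref{equation:sum_ci} applied to the partition $\overline\lambda$ we have the trivial bound $0 \leq \ci(\overline\lambda) \leq |\overline\lambda|$, so $\ci(\overline\lambda)/t$ is squeezed between $0$ and $|\overline\lambda|/t$, and the latter tends to $0$ in probability by Theorem~\ref{theorem:size_core_plancherel}. Adding this to the previous convergence via Slutsky's theorem gives
\[
\frac{\ci(\lambda)}{t} = \frac{\ci(\overline\lambda)}{t} + \frac{\mathrm{w}_e(\lambda)}{t} \xrightarrow[t\to+\infty]{\mathbb{P}} 0 + \frac{1}{e} = \frac{1}{e},
\]
and convergence in probability to a constant is equivalent to convergence in distribution to that constant, which is exactly the statement.

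There is no genuine obstacle here: the two technical inputs (the Poisson LLN for $|\lambda|$ and the $\sqrt t$-order bound on $|\overline\lambda|$) are available, and the combinatorial identities $|\lambda| = |\overline\lambda| + e\,\mathrm{w}_e(\lambda)$ and $\ci(\lambda) = \ci(\overline\lambda) + \mathrm{w}_e(\lambda)$ reduce everything to a Slutsky-type manipulation. The only mildly subtle point worth flagging explicitly in the write-up is that Theorem~\ref{theorem:size_core_plancherel} is stated as convergence in distribution of $|\overline\lambda|/\sqrt t$, which a priori only controls the distribution on bounded windows; but since the limit law is tight, $|\overline\lambda|/\sqrt t = O_{\mathbb{P}}(1)$, whence $|\overline\lambda|/t = O_{\mathbb{P}}(t^{-1/2}) \to 0$ in probability, which is all that is needed.
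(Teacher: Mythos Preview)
Your proposal is correct and follows essentially the same argument as the paper: both split $\ci(\lambda) = \ci(\overline\lambda) + \mathrm{w}_e(\lambda)$, use the Poisson law of large numbers together with Theorem~\ref{theorem:size_core_plancherel} and Slutsky to show $\mathrm{w}_e(\lambda)/t \to 1/e$, and bound $\ci(\overline\lambda)/t$ by $|\overline\lambda|/t \to 0$ via~\eqref{equation:sum_ci}. Your write-up is in fact slightly more careful than the paper's, in that you make explicit the passage from convergence in distribution of $|\overline\lambda|/\sqrt t$ to convergence in probability of $|\overline\lambda|/t$ via tightness (or alternatively via Proposition~\ref{proposition:limit_Et_core} and Markov), and you note the equivalence between convergence in probability and in distribution when the limit is a constant.
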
 

We will now prove the de-Poissonised version of the above result, not by using a de-Poissonisation technique but instead by using the fact that $\omega_\lambda$ converges to a limit shape.

\subsection{Law of large numbers}

Let $n \geq 1$, $\lambda \in \partsn$ and $i \in \Ze$. Recall from~\textsection\ref{subsection:young} the definition of the function $\omega_\lambda$. We denote by $\tomega_\lambda : \R \to \R$ the function defined by $\tomega_\lambda(s) \coloneqq \frac{1}{\sqrt n} \omega_\lambda\bigl(s\sqrt n\bigr)$. Note that the area between the graphs of $\tomega_\lambda$ and $|\cdot|$ is $2$. It follows from Lemma~\ref{lemma:ci_omegalambda} that
\begin{equation}
\label{equation:ci_tomegalambda}
\ci(\lambda) = \frac{\sqrt n}{2}\sum_{k \in \Z} \tomega_\lambda\left(\frac{i+ke}{\sqrt n}\right) - \left\lvert \frac{i+ke}{\sqrt n}\right\rvert.
\end{equation}

\begin{theorem}[\protect{\cite{logan-shepp:variational}, \cite{kerov-vershik:asymptotics}, \cite[Theorem 1.26]{romik}}]
\label{theorem:LLN_partitions}
Let $\Omega : \R \to \R$ be defined by
\[
\Omega(s) \coloneqq \begin{cases}
\frac{2}{\pi}\left(s\arcsin(\frac{s}{2})+\sqrt{4-s^2}\right),&\text{if } \lvert s \rvert \leq 2,
\\
\lvert s \rvert, &\text{otherwise}.
\end{cases}
\]
Then, under the Plancherel measure $\pl$, the function $\tomega_\lambda$ converges uniformly in probability to $\Omega$ as $n \to +\infty$. In other words, for any $\myepsilon > 0$ we have
\[
\pl\left(\sup_{\R} \bigl|\tomega_\lambda-\Omega\bigr| > \myepsilon\right) \xrightarrow{n\to+\infty} 0.
\]
Moreover, we also have convergence of the supports, that is:
\[
\inf\bigl\{s \in \R : \tomega_\lambda(s) \neq |s|\bigr\} \longrightarrow -2,
\]
and
\[
\sup\bigl\{s \in \R : \tomega_\lambda(s) \neq |s|\bigr\} \longrightarrow 2,
\]
in probability under $\pl$ as $n \to +\infty$.
\end{theorem}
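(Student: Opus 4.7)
This is a classical result and my plan is to follow the variational approach of Logan--Shepp~\cite{logan-shepp:variational}, with the companion work of Kerov--Vershik~\cite{kerov-vershik:asymptotics} providing a parallel proof via moment computations in the group algebra of $\mathfrak{S}_n$. The starting point is the Frame--Robinson--Thrall hook length formula
\[
\#\mathrm{Std}(\lambda) = \frac{n!}{\prod_{(a,b)\in\mathcal{Y}(\lambda)}h_{(a,b)}},
\]
which expresses $\pl(\lambda) = \#\mathrm{Std}(\lambda)^2/n!$ purely in terms of the Young diagram. After the rescaling $s = x/\sqrt n$ that produces $\tomega_\lambda$, Stirling's formula combined with the conversion of $\sum_{(a,b)\in\mathcal{Y}(\lambda)}\log h_{(a,b)}$ into a double Riemann sum with logarithmic kernel yields
\[
\frac{1}{n}\log\pl(\lambda) = -\mathcal{I}(\tomega_\lambda) + o(1),
\]
uniformly over admissible profiles $\omega$ (continuous $1$-Lipschitz functions with $\omega\geq|\cdot|$ and $\int(\omega-|\cdot|) = 2$), where $\mathcal{I}$ is a logarithmic energy functional, essentially a quadratic form in $\omega' - \sgn$ against a $-\log|\cdot|$ kernel.

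The second step is to minimise $\mathcal{I}$ on that class. The associated Euler--Lagrange equation is a singular integral equation on the support of $\omega - |\cdot|$ which, under the Chebyshev substitution $s = 2\cos\theta$, diagonalises; its unique solution is the profile $\Omega$ of the statement, and one directly checks that $\mathcal{I}(\Omega) = 0$. Strict convexity of $\mathcal{I}$ on the relevant affine space (the quadratic part is essentially an $H^{1/2}$ Dirichlet energy and is therefore positive definite on admissible perturbations) then gives $\mathcal{I}(\omega) > 0$ for every admissible $\omega \neq \Omega$.

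Combining the two steps yields, for any $\myepsilon > 0$, a constant $c_\myepsilon > 0$ and the Laplace type estimate
\[
\pl\bigl(\sup_\R|\tomega_\lambda - \Omega| > \myepsilon\bigr) \leq \exp\bigl(-c_\myepsilon n + o(n)\bigr),
\]
the passage from pointwise to uniform control using the $1$-Lipschitz bound and an Arzel\`a--Ascoli compactness argument on the profile space. The main technical obstacle is in the first step: one has to regularise the logarithmic singularity in the double Riemann sum near the diagonal (where the small hooks contribute) and simultaneously control the tails of $\tomega_\lambda$ outside a compact set, so that Stirling's approximation can be applied uniformly; this is the delicate combinatorial heart of the Logan--Shepp argument. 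For the convergence of the supports, the uniform bound $\sup|\tomega_\lambda - \Omega| < \myepsilon$ directly forces $\tomega_\lambda(s) = |s|$ for $|s| \geq 2 + \myepsilon$; the matching lower bound $\liminf \lambda_1/\sqrt n \geq 2$ is the law-of-large-numbers statement for the longest increasing subsequence, which follows either from the same variational analysis applied to the first row or from the RSK-based asymptotics used in~\cite{boo}.
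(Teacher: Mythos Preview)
The paper does not give a proof of this theorem: it is quoted as a classical result, with citations to \cite{logan-shepp:variational}, \cite{kerov-vershik:asymptotics} and \cite[Theorem~1.26]{romik}, and used as a black box in the proof of Proposition~\ref{proposition:ci/n}. Your sketch of the Logan--Shepp variational approach is a fair outline of the standard proof of the limit-shape part, so in that sense there is nothing to compare against.

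There is, however, a genuine gap in your handling of the support convergence. The uniform bound $\sup_\R|\tomega_\lambda - \Omega| < \myepsilon$ does \emph{not} force $\tomega_\lambda(s) = |s|$ for $|s| \geq 2+\myepsilon$. A Young diagram can have a long thin first row, e.g.\ $\lambda_1 \approx 3\sqrt n$ with $\lambda_2$ much smaller; then $\tomega_\lambda(s) - |s|$ is strictly positive on the whole interval $(2,3)$ but only of size $O(n^{-1/2})$ there, so the sup-norm bound against $\Omega$ is not violated. In other words, uniform convergence of $1$-Lipschitz profiles says nothing about convergence of supports. Both the upper bound $\limsup \lambda_1/\sqrt n \leq 2$ and the lower bound $\liminf \lambda_1/\sqrt n \geq 2$ (and likewise for $\lambda'_1$) require a separate argument --- this is the content of the Vershik--Kerov theorem on the first row, or equivalently the law of large numbers for the longest increasing subsequence that you mention only for the lower bound. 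This is exactly why the paper cites \cite[Theorem~1.26]{romik}, where the limit shape and the support statement are packaged together.
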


We can now directly deduce the de-Poissonised version of Proposition~\ref{proposition:ci/t}.

\begin{proposition}
\label{proposition:ci/n}
For any $i \in \Ze$, the random variable $\frac{\ci(\lambda)}{n}$ converges in probability to $\frac{1}{e}$ under the Plancherel measure $\pl$ as $n \to +\infty$.
\end{proposition}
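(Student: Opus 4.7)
The plan is to interpret formula~\eqref{equation:ci_tomegalambda} as a Riemann sum and exploit the limit-shape Theorem~\ref{theorem:LLN_partitions}. Dividing~\eqref{equation:ci_tomegalambda} by $n$, and setting $g_\lambda(s) \coloneqq \tomega_\lambda(s) - |s|$ and $x_k \coloneqq (i+ke)/\sqrt n$, we rewrite
\[
\frac{\ci(\lambda)}{n} = \frac{1}{2e}\cdot \frac{e}{\sqrt n}\sum_{k \in \Z}g_\lambda(x_k).
\]
The change of variables $u = s\sqrt n$ used in~\textsection\ref{subsection:young} shows that $\int_\R g_\lambda = 2$ for every $\lambda \in \partsn$, so it suffices to prove that the Riemann sum $S_n(\lambda) \coloneqq \frac{e}{\sqrt n}\sum_k g_\lambda(x_k)$ converges in probability to $2$ under $\pl$.

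My approach is to compare $S_n(\lambda)$ with the deterministic Riemann sum $T_n \coloneqq \frac{e}{\sqrt n}\sum_k G(x_k)$, where $G \coloneqq \Omega - |\cdot|$. A direct differentiation gives $\Omega'(s) = \frac{2}{\pi}\arcsin(s/2)$ on $(-2,2)$, matching $\pm 1$ at $s = \pm 2$, so $\Omega$ is $C^1$ on $\R$ and $G$ is continuous and supported on $[-2,2]$. The standard Riemann-sum convergence for continuous compactly supported functions then gives $T_n \xrightarrow{n \to +\infty} \int_\R G = 2$. For a fixed $\myepsilon > 0$, Theorem~\ref{theorem:LLN_partitions} ensures that the event $A_{n,\myepsilon}$ on which both $\sup_\R|\tomega_\lambda - \Omega| < \myepsilon$ and $\supp(g_\lambda) \subseteq [-2-\myepsilon, 2+\myepsilon]$ holds has $\pl$-probability tending to $1$. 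On $A_{n,\myepsilon}$ both $g_\lambda$ and $G$ vanish outside $[-2-\myepsilon, 2+\myepsilon]$, so at most $O(\sqrt n)$ terms of the Riemann sums are non-zero and $|g_\lambda - G| \leq \myepsilon$ uniformly. This yields $|S_n(\lambda) - T_n| \leq \frac{e}{\sqrt n} \cdot O(\sqrt n) \cdot \myepsilon = O(\myepsilon)$ on $A_{n,\myepsilon}$; combining with $T_n \to 2$ and letting $\myepsilon \to 0$ delivers the desired convergence in probability of $\ci(\lambda)/n$ to $1/e$.

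The main obstacle is the control of the support of $g_\lambda$: although for any given partition the sum is finite, without a uniform (in-probability) bound on its effective range the number of non-zero terms could overwhelm the pointwise $\myepsilon$-approximation and spoil the comparison between $S_n(\lambda)$ and $T_n$. The support-convergence statement at the end of Theorem~\ref{theorem:LLN_partitions} is precisely what rules this out, by guaranteeing an $O(\sqrt n)$ effective summation range with probability going to $1$.
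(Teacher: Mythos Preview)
Your proof is correct and follows essentially the same approach as the paper's: you split $\ci(\lambda)/n$ into a deterministic Riemann sum for $G = \Omega - |\cdot|$ (which converges to $\frac{1}{2e}\int_\R G = \frac{1}{e}$) plus a random remainder controlled on the high-probability event furnished by Theorem~\ref{theorem:LLN_partitions}, using both the uniform bound $|\tomega_\lambda - \Omega| < \myepsilon$ and the support control to keep the number of nonzero terms $O(\sqrt n)$. The paper carries out exactly this decomposition, with the same estimates.
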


\begin{proof}
Take $i  \in \{0,\dots,e-1\}$ and write $\frac{\ci(\lambda)}{n} = S^i_n + R_i(\lambda)$, where
\begin{align*}
S_n^i &\coloneqq \frac{1}{2\sqrt n}\sum_{k \in \Z} \Omega\left(\frac{i+ke}{\sqrt n}\right) - \left\lvert\frac{i+ke}{\sqrt n}\right\rvert,
\\
R_i(\lambda) &\coloneqq
\frac{1}{2\sqrt n}\sum_{k \in \Z} \tomega_\lambda\left(\frac{i+ke}{\sqrt n}\right) - \Omega\left(\frac{i+ke}{\sqrt n}\right)
\end{align*}
(note that both sums are in fact finite). We have
\[
S_n^i = \frac{1}{2e}\frac{e}{\sqrt n}\sum_{k \in \Z} g\left(\frac{i+ke}{\sqrt n}\right),
\]
where $g \coloneqq \Omega - |\cdot|$ is continuous on $\R$. Since $g$ has compact support, we deduce that $S_n^i$ is  a mere Riemann sum thus $(S_n^i)_n$ converges as $n \to +\infty$ and
\[
\lim_{n \to +\infty} S_n^i = \frac{1}{2e} \int_\R g(t) dt = \frac{1}{2e}2 = \frac{1}{e}
\]
(to obtain that $\int_{\R} g(t) dt = 2$ we can just make the explicit calculation, or note from Theorem~\ref{theorem:LLN_partitions} that $g$ is a uniform limit of continuous functions of integral $2$).

Thus, it now suffices to prove that $R_i(\lambda)$ converges to $0$ in probability.
Let $\myepsilon > 0$. By Theorem~\ref{theorem:LLN_partitions}, the probability for $\lambda \in \partsn$ to satisfy
\begin{subequations}
\label{subequations:lambda_conv_Omega}
\begin{gather}
\lVert \tomega_\lambda - \Omega\rVert_\infty \leq \myepsilon,
\\
\mathrm{supp}(\tomega_\lambda - \lvert \cdot \rvert) \subseteq [-2-\myepsilon,2+\myepsilon],
\end{gather}
\end{subequations}
goes to $1$ as $n \to \infty$. Now if $|k| \geq \frac{(2+\myepsilon)\sqrt n}{e} + 1$ then 
\begin{align*}
\left|\frac{i+ke}{\sqrt n}\right|
&=
\frac{e}{\sqrt n}\left|k + \frac{i}{e}\right|
\\
&\geq \frac{e}{\sqrt n}\left(|k| - \left|\frac{i}{e}\right|\right)
\\
&\geq \frac{e}{\sqrt n}\bigl(|k| - 1\bigr)
\\
&\geq 2+\myepsilon,
\end{align*}
thus for such a $k\in\N$ we have $\bigl(\tomega_\lambda-|\cdot|\bigr)\bigl(\frac{i+ke}{\sqrt n}\bigr) = 0$.
 We deduce that for $\lambda$ satisfying~\eqref{subequations:lambda_conv_Omega} we have
\begin{align*}
\bigl\lvert R_i(\lambda)\bigr\rvert
&\leq
\frac{1}{2\sqrt n}\sum_{k \in \Z} \left\lvert \tomega_\lambda\left(\frac{i+ke}{\sqrt n}\right) - \Omega\left(\frac{i+ke}{\sqrt n}\right)\right\rvert
\\
&=
\frac{1}{2\sqrt n}\sum_{\lvert k \rvert < \frac{(2+\myepsilon)\sqrt n}{e}+1}\left\lvert\tomega_\lambda\left(\frac{i+ke}{\sqrt n}\right) - \Omega\left(\frac{i+ke}{\sqrt n}\right)\right\rvert
\\
&\leq
\frac{1}{2\sqrt n} \left(\frac{2(2+\myepsilon)\sqrt n}{e}+3\right)\myepsilon
\\
&\leq
\frac{(2+\myepsilon)\myepsilon}{e} + \frac{3\delta}{2\sqrt n}.
\\
&\leq
\frac{(2+\myepsilon)\myepsilon}{e} + \frac{3\delta}{2}.
\end{align*}
We thus have proved that $\pl(\left\lvert R_i(\tomega_\lambda) \right\rvert > \frac{2(2+\myepsilon)\myepsilon}{e} + \frac{3\delta}{2}) \xrightarrow{n \to \infty} 0$ and this concludes the proof since $ \frac{2(2+\myepsilon)\myepsilon}{e} + \frac{3\delta}{2} \xrightarrow{\delta \to 0} 0$.
\end{proof}

\appendix

\section{Intermediate proofs}

We provide here the proofs of Lemma~\ref{lemma:sumexp} and Lemma~\ref{lemma:riemann-lebesgue}.

\subsection{A conditionally convergent series}
\label{subsection:proof_lemma}

We prove here Lemma~\ref{lemma:sumexp}.
Let $k \in \Z$ with $k \notin e\Z$. With $\sumexp[k](x) \coloneqq \sum_{n \in e\Z + k} \frac{\exp 2\ic n x}{n} = 2\sum_{n \in 2e\Z+2k} \frac{\exp \ic nx}{n}$, we want to prove that
\[
\sumexp[k](x) = \frac{\ic}{e}\sum_{\omega \in \Omega_{2e}}\exp(2\ic k\omega)(\omega-\sgnx{x}{\omega}\pi),
\]
for any $x \in (-\pi,\pi) \setminus \Omega_{2e}$, where $\sgnx{x}{\omega} = \sgn(\omega-x)$ and   $\Omega_{2e} = \bigl\{\frac{\ell\pi}{e} : -e < \ell \leq e\bigr\} \subseteq (-\pi,\pi]$. 
 In particular, with $\omega_\ell = \frac{\ell\pi}{e}$ we have $\Omega_{2e} = \{\omega_\ell\}_{-e < \ell \leq e}$ and the above equality shows that $\sumexp[k]$ is piecewise constant on each interval $\bigl(\omega_\ell,\omega_{\ell+1}\bigr)$ for $-e \leq \ell < e$.

We denote by $\ln : \mathbb{C} \setminus \R_- \to \{z \in \mathbb{C} : -\pi < \arg z < \pi\}$ the principal value of the logarithm.  Recall that for any $z \in \mathbb{C}$ with $|z| \leq 1$ and $z \neq 1$ we have
\[
-\ln(1-z) = \sum_{n = 1}^{+\infty} \frac{z^n}{n}.
\]
As in the proof of Lemma~\ref{lemma:sum_Jek+s}, we deduce that if moreover $z \notin \mu_{2e}$ then, recalling~\eqref{equation:sum_root_unity},
\begin{align*}
-\sum_{\zeta \in \mu_{2e}} \zeta^{-2k} \ln(1-\zeta z)
&=
\sum_{n =1}^{+\infty} \frac{z^n}{n} \sum_{\zeta \in \mu_{2e}} \zeta^{n-2k}
\\
&=
2e\sum_{\substack{n = 1 \\ n \in 2e\Z + 2k}}^{+\infty} \frac{z^n}{n}.
\end{align*}
We deduce that
\begin{align*}
\sum_{n \in 2e\Z + 2k} \frac{z^n}{n}
&=
\sum_{\substack{n = 1 \\ n \in 2e\Z + 2k}}^{+\infty} \frac{z^n}{n}
+ \sum_{\substack{n = -\infty \\ n \in 2e\Z+2k}}^{-1} \frac{z^n}{n}
\\
&=
\sum_{\substack{n = 1 \\ n \in 2e\Z + 2k}}^{+\infty} \frac{z^n}{n}
-
\sum_{\substack{n = 1 \\ n \in 2e\Z - 2k}}^{+\infty} \frac{z^{-n}}{n}
\\
&=
-\frac{1}{2e} \sum_{\zeta \in \mu_{2e}} \zeta^{-2k} \ln(1-\zeta z) + \frac{1}{2e}\sum_{\zeta \in \mu_{2e}} \zeta^{2k}\ln\bigl(1-\zeta z^{-1}\bigr)
\\
&=
\frac{1}{2e}\sum_{\zeta \in \mu_{2e}} \zeta^{2k} \left[ \ln\bigl(1-\zeta z^{-1}\bigr) - \ln\bigl(1-\zeta^{-1} z\bigr)\right].
\end{align*}
Now if $|z| = 1$, we have $\arg\bigl(1-\zeta z^{-1}\bigr),\arg\bigl(1-\zeta^{-1} z\bigr) \in (-\pi/2,\pi/2)$  thus
\[
-\pi < \arg\bigl(1-\zeta z^{-1}\bigr) - \arg\bigl(1-\zeta^{-1} z\bigr) < \pi.
\]
We deduce that
\[
\ln\bigl(1-\zeta z^{-1}\bigr) - \ln\bigl(1-\zeta^{-1} z\bigr) = \ln \frac{1-\zeta z^{-1}}{1-\zeta^{-1} z}.
\]
We have $\frac{1-\zeta z^{-1}}{1-\zeta^{-1} z} = -\zeta z^{-1}$ thus we obtain
\[
\sum_{n \in 2e\Z+2k} \frac{z^n}{n} =\frac{1}{2e} \sum_{\zeta \in \mu_{2e}} \zeta^{2k} \ln \left( - \zeta z^{-1}\right),
\]
for $|z| = 1$ and $z \notin \mu_{2e}$. 
With $z = \exp \ic x$ with $x \in (-\pi,\pi) \setminus \Omega_{2e}$, we obtain that, with $\sgnx{x}{\omega} \coloneqq \sgn(\omega-x)$,
\begin{align*}
\sumexp[k](x)
&=
2\sum_{n \in 2e\Z + 2k} \frac{\exp \ic n x}{n}
\\
&=
\frac{1}{e}\sum_{\omega \in \Omega_{2e}} \exp \bigl(2\ic k\omega\bigr) \ln \left(-\exp\ic (\omega - x)\right)
\\
&=
\frac{1}{e}\sum_{\omega \in \Omega_{2e}} \exp \bigl(2\ic k\omega\bigr) \ln \Bigl[\exp\ic (\omega - x - \sgnx{x}{\omega}\pi)\Bigr].
\end{align*}
We have $x \in (-\pi,\pi)$ and $\omega \in (-\pi,\pi]$ thus $\omega-x-\sgnx{x}{\omega}\pi \in (-\pi,\pi)$ thus
\[
\ln\Bigl[\exp\ic (\omega - x - \sgnx{x}{\omega}\pi)\Bigr] = \ic(\omega - x - \sgnx{x}{\omega}\pi).
\]
We deduce that
\begin{align*}
\sumexp[k](x)
&=
\frac{\ic}{e}\sum_{\omega \in \Omega_{2e}}\exp(2\ic k\omega)(\omega-x-\sgnx{x}{\omega}\pi)
\\
&=
\frac{\ic}{e}\sum_{\omega \in \Omega_{2e}} \exp(2\ic k\omega)(\omega - \sgnx{x}{\omega}\pi) - \frac{\ic x}{e}\sum_{\omega \in \Omega_{2e}} \exp(2\ic k \omega),
\end{align*}
which gives the desired result since
\[
\sum_{\omega \in \Omega_{2e}} \exp(2\ic k \omega)
=
\sum_{\zeta \in \mu_{2e}} \zeta^{2k} = 0,
\]
since $e \nmid k$.

\begin{remark}
If $k \in e\Z$, then the sum $\sumexpzero(x) \coloneqq \sum_{n \in e\Z\setminus\{0\}} \frac{\exp 2\ic nx}{n}$ is not piecewise constant. Indeed, we have
\[
\sumexpzero(x) + \sum_{k = 1}^{e-1} \sumexp[k](x) = \sum_{n \neq 0} \frac{\exp 2\ic nx}{n} = 2\sum_{n \geq 1}\frac{\sin 2nx}{n} = 2S(2x),
\]
where $S(y)\coloneqq \sum_{n \geq 1} \frac{\sin ny}{n}$. By a standard equality we have $S(y) = \frac{\pi - y}{2}$ for all $y \in (0,2\pi)$ (see, for instance, \cite[1.441.1]{grad}), thus by Lemma~\ref{lemma:sumexp} we know that $\sumexpzero$ is not piecewise constant  (but piecewise affine). We can also directly compute $\sumexpzero$ since
\[
\sumexpzero(x)= \frac{1}{e}\sum_{n \neq 0} \frac{\sin 2enx}{n} = \frac{2}{e}S(2ex).
\]
\end{remark}

\subsection{Riemann--Lebesgue lemma}
\label{subsection:riemann-lebesgue}

We prove here Lemma~\ref{lemma:riemann-lebesgue}.
Let $f$ be integrable on $(a,b)$ and $\phi$ be continuously differentiable on $[a,b]$. We want to prove that if $\phi$ vanishes at a finite number of points then
\[
\int_a^b f(t) \exp \ic\bigl(x\phi(t)\bigr) dt \xrightarrow{x\to+\infty} 0.
\]

If $\phi'$ does not vanish in $[a,b]$ the result reduces to the classical Riemann--Lebesgue lemma after the variable change $u = \phi(t)$. Thus, by additivity it suffices to consider the case where $\phi$ vanishes only at $a$. Let $0 < \epsilon \ll 1$ and write
\[
\int_a^b f(t) \exp \ic\bigl(x\phi(t)\bigr)dt
=
\int_a^{a+\epsilon} f(t) \exp \ic\bigl(x\phi(t)\bigr)dt
+
\int_{a+\epsilon}^b f(t)\exp \ic\bigl(x\phi(t)\bigr)dt.
\]
The first integral is bounded in module by the integral of $|f|$ over $(a,a+\epsilon)$, and the second one goes to zero as $x \to +\infty$ by the  first case since $\phi'$ does not vanish in $[a+\epsilon,b]$. We conclude that for any $\epsilon > 0$ we have
\[
\limsup_{x \to +\infty} \left\lvert\int_a^b f(t) \exp \ic\bigl(x\phi(t)\bigr)dt\right\rvert \leq \int_a^{a+\epsilon} |f(t)| dt,
\]
and now the quantity in the right-hand side goes to zero as $\epsilon \to 0$ since $f$ is integrable.

\end{document}